\documentclass{IEEEojcsys_mod}
\usepackage{amsmath}
\usepackage{amssymb}
\usepackage{cite}
\usepackage{amsfonts}
\usepackage{mathtools}
\usepackage{amsthm}
\usepackage{color}
\usepackage{comment}
\usepackage{xfrac}
\usepackage{xcolor}
\usepackage{enumerate}
\usepackage{url}
\usepackage[normalem]{ulem}
\usepackage{subcaption}
\usepackage{graphicx}
\usepackage{hyperref}

\newtheorem{defn}{Definition}
\newtheorem{assump}{Assumption}
\newtheorem{lem}{Lemma}
\newtheorem{thm}{Theorem}
\newtheorem{prop}{Proposition}
\newtheorem{ex}{Example}
\newtheorem{rem}{Remark}
\begin{document}
\title{On the Stability of Power Systems with AI Data Centers: A Multi-Equilibrium Switched DAE Framework\vspace{0.2cm}}
\author{Vishal Shenoy \affilmark{1}}
\author{Damola Ajeyemi \affilmark{2}}
\author{Mahmoud Abdelgalil \affilmark{1}}
\author{\\Guido Cavraro \affilmark{3}}
\author{Emiliano Dall'Anese \affilmark{2}}
\author{Jorge I. Poveda \affilmark{1}}
\affil{Department of Electrical and Computer Engineering, University of California, San Diego, USA}
\affil{Division of Systems Engineering, Boston University, USA}
\affil{National Laboratory of the Rockies, Golden, CO, USA}
\corresp{CORRESPONDING AUTHOR: Jorge I. Poveda (e-mail: \href{mailto:poveda@ucsd.edu}{poveda@ucsd.edu})}
\authornote{This work was supported in part by the U.S. Department of Energy, Office of Electricity, Advanced Grid Modeling Program, and by Energy, Power, Control and Learning Program of the National Science Foundation}
\begin{abstract}
The rapid growth of  Artificial Intelligence (AI) data centers is introducing dynamic loads into power transmission systems as GPU clusters switch between computational phases with distinct power demands. These dynamics are naturally modeled by switched affine differential-algebraic equations (DAEs), where each mode induces a different equilibrium and consistency space. We study their stability using control and hybrid systems tools. Differential projectors derived from the quasi-Weierstrass form yield an equivalent hybrid representation, from which we characterize the $\omega$-limit set and establish Lyapunov conditions for practical stability with an explicit $\mathcal{KL}$ bound accounting for switching-induced resets. For stochastic inference workloads, we use Foster--Lyapunov techniques to establish probabilistic uniform global recurrence under mode-dependent dwell times and transition probabilities. The framework is validated on the IEEE 39-bus transmission network with co-located AI data centers using high-fidelity electromagnetic transient simulations of deterministic training and stochastic inference workloads.
\end{abstract}
\begin{IEEEkeywords}
Transmission Systems, Switched Systems, Differential Algebraic Equations, data centers, Stochastic Hybrid Dynamical Systems, Foster-Lyapunov Stability, Electro-Magnetic Transient Simulation.
\end{IEEEkeywords}
\maketitle
\section{Introduction}
\label{sec:intro}
 Artificial Intelligence (AI) data centers represent a new class of dynamic loads on the power grid. GPU clusters supporting large-scale training and inference cycle through computational phases with distinct, approximately constant power demands, producing facility-level swings of tens to hundreds of megawatts~\cite{latif2024}. These transitions can also induce frequency and voltage disturbances at the interconnection level~\cite{chen2025datacenter}. With US datacenter demand projected to reach hundreds of gigawatts by 2030~\cite{chen2025datacenter}, understanding the stability implications of these fast, repeated transitions is becoming increasingly important. Existing work has focused primarily on aggregate load modeling, demand response, capacity planning, and voltage regulation~\cite{chen2025datacenter,chen2026voltage,liang2026gputogrid}. In contrast, the stability implications of GPU-induced switching, where each computational mode produces a distinct load level and power-system equilibrium, remain largely unexplored. This gap is particularly relevant given ERCOT's recent open-source dynamic EMT model for AI datacenter loads, which captures converter dynamics, UPS mode transitions, and disturbance ride-through behavior~\cite{ercot2026datacenter}. 

Under the DC power-flow approximation and standard models of synchronous generators, grid-forming inverters, and grid-following resources, a transmission network with co-located AI data centers can be modeled as a switched affine differential-algebraic equation (DAE)~\cite{gross2016solvability,nadeem2022dynamic,nugroho2022observers}. Algebraic constraints enforce bus-level active-power balance, while workload transitions generate mode-dependent affine terms and, consequently, distinct equilibria and consistency spaces. Existing stability theory for switched DAEs largely assumes a common equilibrium or invariant set across modes~\cite{trenn2012switched,LiberzonTrenn2009,tanwani2010observability,nanez2017invariance}. Reachability-based results for switched affine ODEs can accommodate multiple equilibria~\cite{baradaran2020omega}, but do not account for the algebraic constraints of DAEs. Thus, despite the growing use of switched-system methods in power systems~\cite{abdelgalil2025nhls,odunlami2025hybrid,tang2023common}, stability theory for switched affine DAEs with mode-dependent equilibria and consistency spaces remains largely undeveloped.

\vspace{0.1cm}
Motivated by the preceding discussion, this paper develops two complementary stability frameworks for AI datacenter workloads in power transmission systems. AI training, where computational modes follow a deterministic, possibly aperiodic sequence, is modeled as a deterministic hybrid system~\cite{goebel2012hybrid} under average dwell-time switching. AI inference, where mode transitions are driven by unpredictable request arrivals, is modeled as a stochastic hybrid system~\cite{teel2013lyapunov} with mode-dependent dwell times. Stability and recurrence properties of these systems are studied using Lyapunov and Foster--Lyapunov methods, respectively. Specifically, the following are the main contributions of this paper:
\begin{enumerate}
    \item We develop a model of transmission systems with co-located AI data centers as switched affine DAEs with a common system matrix and mode-dependent affine terms, inducing parallel consistency spaces. For sufficiently slow switching, we explicitly characterize the ``practically'' stable $\omega$-limit set of this system.
    \item We develop a hybrid Lyapunov framework establishing (practical) stability under a slow average dwell-time condition on the switching, including an explicit $\mathcal{KL}$ bound that accounts for switching-induced state resets. These results extend switched DAE stability analysis beyond the common-equilibrium setting.
    \item For stochastic inference workloads, we establish uniform global recurrence in probability using Foster--Lyapunov methods, with the recurrent set determined by the inter-equilibrium distances.
    \item We validate the framework on the IEEE 39-bus transmission network using high-fidelity electromagnetic transient simulations, showing that the switched-DAE model captures the fast converter and switching dynamics induced by AI data centers.
\end{enumerate}
The rest of this paper is organized as follows: Section \ref{sec:prelim} presents the preliminaries. Section \ref{sec:power} introduces a DAE model of the power transmission system. Section \ref{sec:formulation} studies stability under training workloads. Section \ref{sec: stochastic} focuses on inference workloads, and finally Section \ref{sec:conclusion} ends with the conclusions.
\section{Preliminaries}
\label{sec:prelim}
\subsection{Notation}
 We use $\mathbb{R}^n$ to denote the $n$-dimensional Euclidean space, $\mathbb{Z}_{\geq 0}$ for the set of nonnegative integers, and $\mathbb{R}_{\geq 0}$ for the set of nonnegative real numbers. We denote the Euclidean norm of $x \in \mathbb{R}^n$ by $\lvert x \rvert$. Given $x \in \mathbb{R}^n$ and a non-empty closed set $\mathcal{A} \subset \mathbb{R}^n$, the distance of $x$ to $\mathcal{A}$ is given by $\lvert x \rvert_{\mathcal{A}} \coloneqq \inf_{y \in \mathcal{A}} \lvert x- y \rvert$. Given $r >0$, the set $\{x \in \mathbb{R}^n \, : \, \lvert x \rvert \leq r \}$ is denoted by $r\mathbb{B}$. For a closed set $S$ and $\epsilon>0$,
$S+\epsilon\mathbb B$ and
$S+\epsilon\mathbb B^\circ$ denote, respectively, $\{x\in\mathbb R^n:|x|_S\leq\epsilon\}$, and $\{x\in\mathbb R^n:|x|_S<\epsilon\}$. Given a matrix $A\in\mathbb{R}^{n\times m}$, we use $|A|$ to denote the induced matrix norm of $A$, defined as $|A|:=\sup_{x\in\mathbb{B}}|Ax|$. Given two vectors $u, v$, their concatenation $[u^\top ~ v^\top]^\top$ is denoted as $(u,v)$. The closure of a set $S \subset \mathbb{R}^n$ is denoted by $\Bar{S}$. The closure of the convex hull of $S$ is denoted as $\overline{\text{co}}~S$. For a symmetric matrix $P$, let $\lambda_{\text{min}}(P)$ denote the minimum eigenvalue. We define $\lambda_{\text{max}}$ analogously. A set-valued mapping $M:\mathbb{R}^p\rightrightarrows\mathbb{R}^n$ is outer semicontinuous (OSC) at $z$ if for each sequence $\{z_i,s_i\}\to(z,s)\in\mathbb{R}^p\times\mathbb{R}^n$ satisfying $s_i\in M(z_i)$ for all $i\in\mathbb{Z}_{\geq0}$, we have $s\in M(z)$. A mapping $M$ is locally bounded (LB) at $z$ if there exists an open neighborhood $N_z\subset\mathbb{R}^p$ of $z$ such that  $M(N_z)$ is bounded. A function $\alpha : \mathbb{R}_{ \geq 0} \to \mathbb{R}_{\geq 0}$ is said to be of class $\mathcal{K}$ if it is zero at zero, continuous, and strictly increasing. It is said to be of class $\mathcal{K}_\infty$ if it is of class $\mathcal{K}$ and unbounded. It is said to be of class $\mathcal{G}_\infty$ if it is continuous, non-decreasing, and unbounded. A function $\beta : \mathbb{R}_{\geq 0} \to \mathbb{R}_{\geq 0}$ is of class $\mathcal{KL}$ if $\beta(\cdot, t)$ is of class $\mathcal{K}$ for each fixed $t \geq 0$ and $t \mapsto \beta(r,t)$ is non-increasing and  $\lim_{t \to \infty}\beta(r,t)=0$ for each fixed $r \geq 0$. Given a closed set $\mathcal{A}$, $\rho : \mathbb{R}^n \to \mathbb{R}_{\geq 0}$ is of class $\mathcal{PD}(\mathcal{A})$ if it is continuous and for each $\delta >0$ and $\Delta >0$ there exists $\Bar{\rho}>0$ such that $\rho(x) \geq \bar{\rho}$ for all $x \in (\mathcal{A} + \Delta \mathbb{B})\setminus (\mathcal{A} + \delta \mathbb{B}^\circ)$. When $\mathcal{A}$ is compact, this condition reduces to the condition $\rho(x) >0$ for all $x \in \mathbb{R}^n \setminus \mathcal{A}$. Given the open set $\mathcal{O} \subset \mathbb{R}^n$, $\rho : \mathbb{R}^n \to \mathbb{R}_{\geq 0}$ is of class $\mathcal{PD}(\mathcal{O})$ if it is continuous and there exists a closed set $\mathcal{A}$ and $\epsilon >0$ such that $\mathcal{A}+ \epsilon \mathbb{B} \subset \mathcal{O}$ and $\rho \in \mathcal{PD}(\mathcal{A})$.
\subsection{Stochastic Hybrid Dynamical Systems}
Following the formalism of \cite{teel2013lyapunov}, a \emph{stochastic hybrid dynamical system} (SHDS) is characterized by the dynamics
\begin{subequations}\label{eqn:shy_sys_def}
\begin{align}
&\xi\in C,~~~~~~\dot{\xi} \in F(\xi), \label{flowsSHDS}\\
&\xi\in D,~~~~ \xi^+ \in G(\xi,v),~~~v\sim\mu\label{jumpsSHDS}
\end{align}
\end{subequations}
where $C,D \subset \mathbb{R}^n$ are the flow and jump sets, respectively, $F : \mathbb{R}^n \rightrightarrows \mathbb{R}^n$ is the flow map, $G : \mathbb{R}^n \times \mathbb{R}^m \rightrightarrows \mathbb{R}^n$ is the jump map, and $\mu$ is a probability distribution associated to a sequence of independent, identically distributed (i.i.d.) random inputs $\{v_i\}_{i \geq 1}$, defined on a probability space $(\Omega, \mathcal{F}, \mathbb{P})$. In this paper, we consider SHDS with stochasticity entering only through the jumps. Namely, whenever $x \in D$ and a jump occurs, the post-jump value of $x$ is drawn from $G(x, v_i)$ with $v_i \sim \mu$.  Since system \eqref{eqn:hy_sys_def} combines continuous-time and discrete-time dynamics, for each sample path its solutions $\xi(t,j)$ are parametrized by a continuous-time index $t\in\mathbb{R}_{\geq0}$, which increases continuously during the flows \eqref{flowsHDS}, and a discrete-time index $j$, which increases by one during the jumps \eqref{jumpsHDS}. Therefore, solutions $\xi(t,j)$ to system \eqref{eqn:hy_sys_def} evolve on stochastic hybrid time domains (SHTDs), denoted as \text{dom} ($\xi$). For the precise definition of a random solution to $\mathcal{H}_s$, including the pathwise feasibility and causal-measurability requirements with respect to the filtration generated by $\{v_i\}$, we refer the readers to \cite[Sec.~2.2]{teel2013lyapunov}.

The probabilistic stability properties of $\mathcal{H}_s$ are studied using the following definition, borrowed from \cite[Sec.~2.3]{teel2013lyapunov}:
\begin{defn} A closed set $\mathcal{A} \subset \mathbb{R}^n$ is said to be \emph{uniformly globally asymptotically stable in probability} (UGAS$_p$) for $\mathcal{H}_s$ if it is both uniformly globally stable in probability and uniformly globally attractive in probability. Uniform global stability in probability requires that for every $\epsilon > 0$ and $\varrho > 0$ there exists $\delta > 0$ such that every random solution $x$ with $|x(0,0)|_{\mathcal{A}} \leq \delta$ satisfies $x(t,j) \in \mathcal{A} + \epsilon \mathbb{B}$ for all $(t,j)$ in its hybrid time domain, with probability at least $1 - \varrho$. Uniform global attractivity in probability requires that for every $\epsilon > 0$, $\varrho > 0$, and $R > 0$ there exists $s \geq 0$ such that every random solution $x$ with $|x(0,0)|_{\mathcal{A}} \leq R$ satisfies $x(t,j) \in \mathcal{A} + \epsilon \mathbb{B}$ for all $(t,j)$ in its domain with $t + j \geq s$, with probability at least $1 - \varrho$.
\hfill \qed
\end{defn}
In some cases, we consider set recurrence (\cite[Sec.~2.4]{teel2013lyapunov}) rather than stability:
\begin{defn} Let $\mathcal{O} \subset \mathbb{R}^n$ be an open set. The set $\mathcal{O}$ is said to be \emph{uniformly globally recurrent} (UGR) for $(C,F,D,G,\mu)$ if for every $\varrho > 0$ and $R > 0$ there exists $s \geq 0$ such that every random solution $x$ with $|x(0,0)|\leq R$ satisfies the following with probability at least $1 - \varrho$: either the hybrid time domain of $x$ contains no $(t,j)$ with $t + j \geq s$, or there exists $(t,j)$ in the domain of $x$ with $t + j \leq s$ such that $x(t,j) \in \mathcal{O}$.
\hfill \qed
\end{defn}
For the case when $G$ does not depend on a random input, system \eqref{eqn:shy_sys_def} becomes a deterministic hybrid system of the form \cite[Ch.~2]{goebel2012hybrid}:
\begin{subequations}\label{eqn:hy_sys_def}
\begin{align}
&\xi\in C,~~~~~~\dot{\xi} \in F(\xi), \label{flowsHDS}\\
&\xi\in D,~~~~ \xi^+ \in G(\xi),\label{jumpsHDS}
\end{align}
\end{subequations}
For these systems, we use the following stability notions:
\begin{defn}
    The set $\mathcal{A}$ is uniformly globally asymptotically stable (UGAS) for system \eqref{eqn:hy_sys_def} if there exists $\beta \in \mathcal{KL}$ such that any solution $\xi$ satisfies $\lvert \xi(t,j) \rvert _{\mathcal{A}} \leq \beta(\lvert \xi(0,0) \rvert_\mathcal{A},t+j)$, for all $(t,j)\in\text{dom}(\xi)$. 
    \hfill \qed
\end{defn}
The following definition is instrumental for the stability analysis of HDS that depend on a small parameter $\varepsilon$.
\begin{defn}\label{defn:SGPAS} 
    The set $\mathcal{A}$ is semi-globally practically asymptotically stable (SGPAS) for an $\epsilon$-parameterized HDS $\mathcal{H}_{\epsilon}$ as $\epsilon \to 0$ if there exists $\beta \in \mathcal{KL}$ and for each $\mu,\nu >0$ there exists a $\epsilon^* >0$ such that, for each $\epsilon \in (0,\epsilon^*)$, any solution $\xi$ with $\lvert \xi(0,0) \rvert_{\mathcal{A}} \leq \mu$ also satisfies  $\lvert \xi(t,j) \rvert_{\mathcal{A}} \leq \beta(\lvert \xi(0,0) \rvert_{\mathcal{A}},t+j) + \nu$, for all $(t,j) \in \text{dom}(\xi)$. \hfill \qed
\end{defn}

\section{Power Transmission System with AI Datacenter Loads}
\label{sec:power}
\vspace{1mm}
We consider a power transmission system characterized by a set of buses $\mathcal{N}=\{1,\ldots,N\}$ and a set of lines $\mathcal{E} \subset \mathcal{N} \times \mathcal{N}$. The buses are partitioned into synchronous generator (SG) buses
$\mathcal{G}$, grid forming (GFM) inverter-based resources (IBR) buses $\mathcal{I}$, grid following (GFL) IBR buses $\mathcal{F}$, and
load buses $\mathcal{L}$. 
The subset $\mathcal{L}_{dc}\subseteq\mathcal{L}$ hosts a collection of AI training data centers. We use $\mathcal L_0:=\mathcal L\setminus\mathcal L_{dc}$ for the set of conventional load buses without data centers.
Each bus $n\in\mathcal{L}_{dc}$ is paired with a campus GFM bus
$n'\in\mathcal{I}_{dc}\subseteq\mathcal{I}$. For each bus $ i \in \mathcal{N}$, we denote its set of neighboring buses by $\mathcal{N}_i = \{j \in \mathcal{N} \, : \, (i,j) \in \mathcal{E}\}$. {Let $\delta_{\mathrm{bus}}=[\delta_1,\ldots,\delta_N]^\top\in\mathbb{R}^N$ denote the vector of voltage phase angles in a synchronous reference frame. We choose bus $N$ as the angular reference and use the relative-angle coordinates
$\theta=[\delta_1-\delta_N,\ldots,\delta_{N-1}-\delta_N]^\top\in\mathbb{R}^{N-1}$. At each dynamic bus $n\in\mathcal G\cup\mathcal I$, $\omega_n:=\dot\delta_n$ denotes the angular-frequency deviation from nominal frequency. The variables $\omega_n$ at the algebraic buses are the quasi-steady frequency estimates defined by the frequency divider below; collect all of these variables in $\omega=[\omega_1,\ldots,\omega_N]^\top$.}
\vspace{0.2cm}
\par \noindent
\subsection{Power Flow and DC Approximation:}  {We assume fixed voltage magnitudes, negligible line resistances, and sufficiently small voltage-angle differences, so that the network is represented by a DC power-flow model.} Under this approximation, the active-power injection at bus $i$ is given by  {$p_i(\theta) = \sum_{j \in \mathcal{N}_i} b_{ij} (\theta_i - \theta_j)$}, where $b_{ij} = 1/x_{ij} >0$ denotes the line susceptance of the line associated with the edge $(i,j) \in \mathcal{E}$,  {with the convention $\theta_N=0$}. Stacking the vector of active power injections  {$p(\theta) = [p_1(\theta), \cdots, p_N(\theta)]^\top$}, the DC power-flow approximation can be compactly written as 
$p(\theta)=\bar{B}\theta$ with $\bar{B}=B\mathcal{I}_0$, where
$B\in\mathbb{R}^{N\times N}$ is the susceptance Laplacian and
$\mathcal{I}_0=[I_{N-1}~\mathbf{0}]^\top\in\mathbb{R}^{N\times(N-1)}$ embeds
the reduced angle vector into the full $N$-bus angle vector with the
reference-bus angle set to zero. For compactness, we define selector matrices $S_{\mathcal{G}} \in 
\{0,1\}^{N \times \lvert \mathcal{G} \rvert },S_{\mathcal{I}} \in \{0,1\}^{N \times \lvert \mathcal{I} \rvert } ,S_{\mathcal{F}} \in \{0,1\}^{N \times \lvert \mathcal{F} \rvert }, \text{ and } S_{\mathcal{L}}\in  \{0,1\}^{N \times \lvert \mathcal{L} \rvert }$ to 
extract bus-specific variables from full $N$-bus vectors.  {We  use $S_{\mathcal L_0}$ and $S_{\mathcal L_{dc}}$ for the two subsets of $\mathcal L$, and, without loss of generality, order the load-bus selector as $S_{\mathcal L}=[S_{\mathcal L_0}~S_{\mathcal L_{dc}}]$.} Therefore, we set $\omega_{\mathcal{G}} \coloneqq S_{\mathcal{G}}^\top \omega $ and define the vectors $\omega_\mathcal{I}, \omega_\mathcal{F}$, and $\omega_\mathcal{L}$ analogously.  {Consistently with the ordering used below, define
$S_{\mathcal{IG}}:=[S_{\mathcal I}~S_{\mathcal G}]$ and
$\omega_{\mathcal{IG}}:=[\omega_{\mathcal I}^\top~\omega_{\mathcal G}^\top]^\top=S_{\mathcal{IG}}^\top\omega$.}  
\par \noindent
 {Let $m:=|\mathcal G|+|\mathcal I|$. Since the reference bus belongs to $\mathcal G\cup\mathcal I$, define
$\bar S_{\mathcal{GI}}\in\{0,1\}^{(N-1)\times(m-1)}$ to select the reduced-angle coordinates associated with $(\mathcal G\cup\mathcal I)\setminus\{N\}$, and define
$\bar S_{\mathcal F}:=\mathcal I_0^\top S_{\mathcal F}\in\{0,1\}^{(N-1)\times|\mathcal F|}$ and
$\bar S_{\mathcal L}:=\mathcal I_0^\top S_{\mathcal L}\in\{0,1\}^{(N-1)\times|\mathcal L|}$. Thus,
$\theta_{\mathcal{GI}}:=\bar S_{\mathcal{GI}}^\top\theta$,
$\theta_{\mathcal F}:=\bar S_{\mathcal F}^\top\theta$, and
$\theta_{\mathcal L}:=\bar S_{\mathcal L}^\top\theta$. Let
$S_{\mathcal{GI}}^0:=\mathcal I_0\bar S_{\mathcal{GI}}$ select the nonreference $\mathcal G\cup\mathcal I$ buses from full bus vectors, and let $e_N\in\mathbb R^N$ denote the $N$th canonical vector. The matrices $\mathcal R_{\mathcal I}:=(S_{\mathcal{GI}}^0)^\top S_{\mathcal I}
-\mathbf 1_{m-1}e_N^\top S_{\mathcal I}$ and 
$\mathcal R_{\mathcal G}:=(S_{\mathcal{GI}}^0)^\top S_{\mathcal G}
-\mathbf 1_{m-1}e_N^\top S_{\mathcal G}$
give the relative-angle kinematics
$\dot\theta_{\mathcal{GI}}=\mathcal R_{\mathcal I}\omega_{\mathcal I}
+\mathcal R_{\mathcal G}\omega_{\mathcal G}$. Finally, set
$S_{\mathcal{LF}}:=[S_{\mathcal F}~S_{\mathcal L}]$.}
The following assumption is standard:
\begin{assump}\label{asm:network}
 {The network $(\mathcal{N}, \mathcal{E})$ is undirected and connected. Moreover,} $\mathcal{G}\cup\mathcal{I}\neq\emptyset$, and bus
$N\in\mathcal{G}\cup\mathcal{I}$.
\hfill \qed
\end{assump}
\subsection{Models of the Devices}
The dynamics of each synchronous generator $n\in\mathcal{G}$ are approximated by the following turbine-governor model \cite{ajala2020library}:
\begin{align*}
     {\dot{\delta}_n}& {=\omega_n}, \\
M_n\dot{\omega}_n&=-D_n\omega_n+p_{m,n}-p_{\ell,n}-p_n(\theta), \\
\tau_n\dot{p}_{m,n}&=-p_{m,n}-R_n^{-1}\omega_n+p_n^{\mathrm{ref}},   
\end{align*}
where$M_n >0$ and $D_n >0$ denote the inertia and damping coefficients, respectively. The variable $p_{m,n}$ represents the turbine mechanical power, and  {$p_{\ell,n}$} the local load. Finally, $\tau_n>0$ denotes the turbine time constant,  {$R_n >0$ denotes the frequency--power droop coefficient, $R_n^{-1}$ denotes its inverse,} and $p_n^{\text{ref}}$ denotes the active power reference. 
The dynamics of the GFM buses $n\in\mathcal{I}$ are modeled by the following equations:
\begin{align*}
     {\dot{\delta}_n} {=\omega_n},~~M_n\dot{\omega}_n= -D_n\omega_n+p_n^{\mathrm{ref}}-p_{\ell,n}-p_n(\theta),    
\end{align*}
with virtual inertia and damping parameters $M_n >0$ and $D_n>0$, respectively \cite{dorfler2023}.
On the other hand, since the phase-locked loops (PLL) dynamics of the GFL  {IBRs} are significantly faster than electromechanical  {timescales}, the behavior of the
GFL buses $n\in\mathcal{F}$ can be characterized by the following algebraic equations: 
\begin{equation*}
    0=p_n^{\mathrm{ref}}-K_n\omega_n-p_{\ell,n}-p_n(\theta),    
\end{equation*}
 {where $K_n\geq0$ is the GFL frequency-response coefficient. Similarly, the conventional load buses $n \in\mathcal{L}_0$ are modeled by the algebraic equation $0=-p_{\ell,n}-p_n(\theta)$.}
\subsection{Models of the Datacenter Loads:} To model the loads of the data centers, we let each $n\in\mathcal{L}_{dc}$ host a cluster of  {graphics} processing units (GPUs) cycling through one of two mode sets depending on whether training or inference workloads are being considered. For \emph{training}, we consider the following set of modes:
\begin{equation}
     {\mathcal{Q}_{\mathrm{tr}}:=\{\texttt{fw},\,\texttt{bw},\,\texttt{ar},\,\texttt{id}\}}
    \label{eq:modes_ps}
\end{equation}
corresponding to \emph{forward pass}, \emph{backward pass}, \emph{activation recompute}, and \emph{idle} modes of the training phase \cite{patel2024,latif2024}:
\begin{enumerate}
    \item \textit{Forward pass} (\texttt{fw}): the input batch is
    propagated layer-by-layer through the network to compute
    activations, drawing power $p_{dc}^{\texttt{fw}}$.
    \item \textit{Backward pass} (\texttt{bw}): gradients are
    propagated layer-by-layer back through the network via backpropagation, drawing power $p_{dc}^{\texttt{bw}}$.
    \item \textit{Activation recompute} (\texttt{ar}): activations
    discarded during the forward pass are recomputed on demand to
    reduce memory footprint, drawing power $p_{dc}^{\texttt{ar}}$ \cite[Section 4.3]{go2025characterizing}.
    \item \textit{Idle} (\texttt{id}): the cluster awaits the next
    batch or optimizer step, drawing baseline power
    $p_{dc}^{\texttt{id}}$.
\end{enumerate}
 {For the operating regime considered in this paper, we assume that the training-mode power draws satisfy
$p_{dc}^{\texttt{fw}} > p_{dc}^{\texttt{bw}} > p_{dc}^{\texttt{ar}} > p_{dc}^{\texttt{id}}>0$.}

\vspace{0.2cm}
On the other hand, during \emph{inference}, each user request drives the cluster of GPUs through three distinct phases:
\begin{enumerate}
    \item \textit{Prefill} (\texttt{pre}): all prompt tokens are processed
    in a single forward pass, drawing peak  {power $p_{\mathrm{dc}}^{\texttt{pre}}$}~\cite[Sec.~2.1]{distserve2024,
    patel2024}.
    \item \textit{Decode} (\texttt{dec}): tokens are generated
     {autoregressively, drawing the moderate power level $p_{\mathrm{dc}}^{\texttt{dec}}$}~\cite[Sec.~2.1]{distserve2024,
    patel2024}.
    \item \textit{Idle} (\texttt{id}): the cluster awaits the next request, drawing baseline power
    $p_{\mathrm{dc}}^{\texttt{id}}$.
\end{enumerate}
This leads to the mode set 
\begin{equation}
    \mathcal{Q}_{\text{in}}:=\{\texttt{pre}, \texttt{dec}, \texttt{id}\}
    \label{eq:modes_inference}
\end{equation}
 {For the operating regime considered in this paper, we assume that $p_{\mathrm{dc}}^{\texttt{pre}} > p_{\mathrm{dc}}^{\texttt{dec}}
> p_{\mathrm{dc}}^{\texttt{id}} > 0$.} As in the training phase, each per-mode power level is treated as
constant  {under the modeling assumption that within-mode fluctuations at millisecond time scales have a negligible effect on the electromechanical response of interest}. The empirical studies performed in ~\cite[Section 4.2]{niu2026tokenpowerbench} and
\cite[Figure 3]{maliakel2026characterizing} report stable per-mode power signatures across model families and insensitivity of decode power to GPU
frequency across five model sizes,  {supporting the use of a constant $p_{\mathrm{dc}}^{\texttt{dec}}$ at the}
electromechanical timescale. 
\vspace{0.1cm}
\par \noindent
For both training and inference workloads, the active power drawn per mode is well approximated by a constant (see \cite{patel2024, latif2024} for training and \cite{niu2026tokenpowerbench, maliakel2026characterizing} for inference). Hence, the active-power balance at the bus $n\in\mathcal{L}_{dc}$ under the mode $q$ is given by:
\begin{equation}\label{eq:dcbal}
    0=-p_{dc,n}^q - K_{dc,n}\,\omega_n^{dc}-p_n(\theta),
\end{equation}
where  {$p_{dc,n}^q>0$ denotes the total demand at bus $n$ in mode $q$ (including any constant co-located demand), $K_{dc,n}\geq 0$ is a constant frequency-response coefficient,} and $\omega_n^{dc}$ is the local frequency, defined below.  {When $|\mathcal L_{dc}|>1$, the single variable $q$ represents a common aggregate mode and therefore imposes synchronous mode switching across the modeled data centers. Independent switching can instead be represented, for workload type $w\in\{\mathrm{tr},\mathrm{in}\}$, by taking the hybrid mode in the product set $\mathcal Q_w^{|\mathcal L_{dc}|}$.}
\subsection{Frequency Divider:} To relate the frequencies of GFL IBR and datacenter buses to those of the remaining buses, we use the frequency-divider approach~\cite{milano2016frequency}. Since GFL dynamics are much faster than electromechanical timescales, their frequencies are approximated as algebraic functions of the swing-bus frequencies.  {Define the row selectors
\[
C_{\mathcal F}:=\begin{bmatrix}I_{|\mathcal F|}&0_{|\mathcal F|\times|\mathcal L|}\end{bmatrix},
\qquad
C_{dc}:=\begin{bmatrix}0_{|\mathcal L_{dc}|\times|\mathcal F|}&S_{\mathcal L_{dc}}^\top S_{\mathcal L}\end{bmatrix}.
\]
Under Assumption~\ref{asm:network}, the grounded Laplacian
$S_{\mathcal{LF}}^\top B S_{\mathcal{LF}}$ is positive definite. Hence, the full frequency-divider matrix}
\begin{equation}\label{eq:fdiv_ps}
     {\widehat H:=-\bigl(S_{\mathcal{LF}}^\top B S_{\mathcal{LF}}\bigr)^{-1}
       S_{\mathcal{LF}}^\top B S_{\mathcal{IG}}
       =\begin{bmatrix}\widehat H_{\mathcal I}&\widehat H_{\mathcal G}\end{bmatrix}}
\end{equation}
 {is well defined \cite[Sec.~7A]{ajeyemi2026reachability} and maps the $\mathcal I\cup\mathcal G$ frequencies to all $\mathcal F\cup\mathcal L$ bus frequencies. In particular, define
$H:=C_{\mathcal F}\widehat H
=\begin{bmatrix}H_{\mathcal I}H_{\mathcal G}\end{bmatrix}$ and
$H^{dc}:=C_{dc}\widehat H
=\begin{bmatrix}H_{\mathcal I}^{dc} H_{\mathcal G}^{dc}\end{bmatrix}$.
Then,
$\omega_{\mathcal F}=H_{\mathcal I}\omega_{\mathcal I}
+H_{\mathcal G}\omega_{\mathcal G}$ and
$\omega_{dc}:=[\omega_n^{dc}]_{n\in\mathcal L_{dc}}=H_{\mathcal I}^{dc}\omega_{\mathcal I}
+H_{\mathcal G}^{dc}\omega_{\mathcal G}$.
We retain $\omega_{\mathcal F}$ and $\omega_{dc}$ as explicit algebraic states. }
\vspace{2mm}
\subsection{Switched Affine DAE Model:} Let $\xi_d=(\theta_{\mathcal{GI}}^\top,\omega_{\mathcal{I}}^\top,
\omega_{\mathcal{G}}^\top,p_{m,\mathcal{G}}^\top)^\top$ and
$\xi_a=(\theta_{\mathcal{F}}^\top,\theta_{\mathcal{L}}^\top,
\omega_{\mathcal{F}}^\top,\omega_{dc}^\top)^\top$, and  {$\xi=[\xi_d^\top~\xi_a^\top]^\top$. Its differential and algebraic dimensions are
$n_d=(m-1)+|\mathcal I|+2|\mathcal G|$ and
$n_a=2|\mathcal F|+|\mathcal L|+|\mathcal L_{dc}|$, respectively.} Using \eqref{eq:fdiv_ps}, the overall dynamics of the system are given by the following DAE: 
\begin{equation}\label{eq:sadae_ps}
    E\dot{\xi}=A\xi + b_q ,\quad
    E=\mathrm{blkdiag}(I_{n_d},\, {\mathbf{0}_{n_a\times n_a}}),
\end{equation}
with block matrix
$A=\left[\begin{smallmatrix}A_{11}&A_{12}\\A_{21}&A_{22}
\end{smallmatrix}\right]$ given by

\vspace{-0.3cm}
\begin{subequations}
\begin{align}
A_{11} &\!=\!
 {\begin{bmatrix}
\mathbf{0} & \mathcal{R}_{\mathcal{I}} & \mathcal{R}_{\mathcal{G}} & \mathbf{0} \\
{-}M_{\mathcal{I}}^{-1}S_{\mathcal{I}}^\top\bar{B}\bar{S}_{\mathcal{GI}} &
{-}M_{\mathcal{I}}^{-1}D_{\mathcal{I}} & \mathbf{0} & \mathbf{0} \\
{-}M_{\mathcal{G}}^{-1}S_{\mathcal{G}}^\top\bar{B}\bar{S}_{\mathcal{GI}} &
\mathbf{0} & {-}M_{\mathcal{G}}^{-1}D_{\mathcal{G}} & M_{\mathcal{G}}^{-1} \\
\mathbf{0} & \mathbf{0} & {-}T_{\mathcal{G}}^{-1}R_{\mathcal{G}}^{-1} &
{-}T_{\mathcal{G}}^{-1}
\end{bmatrix}}\!,\label{eq:A11_ps} \\
A_{12} &\!=\!
 {\begin{bmatrix}
\mathbf{0} & \mathbf{0} & \mathbf{0} & \mathbf{0}\\
{-}M_{\mathcal{I}}^{-1}S_{\mathcal{I}}^\top\bar{B}\bar{S}_{\mathcal{F}} &
{-}M_{\mathcal{I}}^{-1}S_{\mathcal{I}}^\top\bar{B}\bar{S}_{\mathcal{L}} &
\mathbf{0} & \mathbf{0}\\
{-}M_{\mathcal{G}}^{-1}S_{\mathcal{G}}^\top\bar{B}\bar{S}_{\mathcal{F}} &
{-}M_{\mathcal{G}}^{-1}S_{\mathcal{G}}^\top\bar{B}\bar{S}_{\mathcal{L}} &
\mathbf{0} & \mathbf{0}\\
\mathbf{0} & \mathbf{0} & \mathbf{0} & \mathbf{0}
\end{bmatrix}}\!,\label{eq:A12_ps} 
\end{align}
\begin{align}
A_{21} &\!=\!
 {\begin{bmatrix}
{-}S_{\mathcal{F}}^\top\bar{B}\bar{S}_{\mathcal{GI}} &
\mathbf{0} & \mathbf{0} & \mathbf{0}\\
{-}S_{\mathcal L_0}^\top\bar{B}\bar{S}_{\mathcal{GI}} &
\mathbf{0} & \mathbf{0} & \mathbf{0}\\
{-}S_{\mathcal{L}_{dc}}^\top\bar{B}\bar{S}_{\mathcal{GI}} &
\mathbf{0} & \mathbf{0} & \mathbf{0}\\
\mathbf{0} & {-}H_{\mathcal{I}} & {-}H_{\mathcal{G}} & \mathbf{0}\\
\mathbf{0} & {-}H_{\mathcal{I}}^{dc} & {-}H_{\mathcal{G}}^{dc} & \mathbf{0}
\end{bmatrix}}\!,\label{eq:A21_ps}
\end{align}
\begin{align}
A_{22} &\!=\!
 {\begin{bmatrix}
{-}S_{\mathcal{F}}^\top\bar{B}\bar{S}_{\mathcal{F}} &
{-}S_{\mathcal{F}}^\top\bar{B}\bar{S}_{\mathcal{L}} &
{-}K_{\mathcal{F}} & \mathbf{0}\\
{-}S_{\mathcal L_0}^\top\bar{B}\bar{S}_{\mathcal{F}} &
{-}S_{\mathcal L_0}^\top\bar{B}\bar{S}_{\mathcal{L}} &
\mathbf{0} & \mathbf{0}\\
{-}S_{\mathcal{L}_{dc}}^\top\bar{B}\bar{S}_{\mathcal{F}} &
{-}S_{\mathcal{L}_{dc}}^\top\bar{B}\bar{S}_{\mathcal{L}} &
\mathbf{0} & {-}K_{dc}\\
\mathbf{0} & \mathbf{0} & I_{|\mathcal{F}|} & \mathbf{0}\\
\mathbf{0} & \mathbf{0} & \mathbf{0} & I_{|\mathcal{L}_{dc}|}
\end{bmatrix}}\!,\label{eq:A22_ps}
\end{align}
and  {$b_q=[b_d^\top~(b_a^q)^\top]^\top$}, where
\begin{align}
b_d &=
 {\begin{bmatrix}
\mathbf{0}_{m-1} \\
M_{\mathcal{I}}^{-1}\bigl(p_{\mathcal{I}}^{\mathrm{ref}} - p_{\ell,\mathcal{I}}\bigr) \\
{-}M_{\mathcal{G}}^{-1}p_{\ell,\mathcal{G}} \\
T_{\mathcal{G}}^{-1}p_{\mathcal{G}}^{\mathrm{ref}}
\end{bmatrix}},~~~
b_a^q =
 {\begin{bmatrix}
p_{\mathcal{F}}^{\mathrm{ref}} - p_{\ell,\mathcal{F}} \\
{-}p_{\ell,\mathcal L_0} \\
{-}p_{dc}^q \\
\mathbf{0}_{|\mathcal F|} \\
\mathbf{0}_{|\mathcal L_{dc}|}
\end{bmatrix}}.
\end{align}
\end{subequations}
Here $M_{\mathcal{I}}=\mathrm{diag}(M_i)_{i\in\mathcal{I}}$ and
$D_{\mathcal{I}}=\mathrm{diag}(D_i)_{i\in\mathcal{I}}$ are constant diagonal
matrices.  {The remaining  matrices are $M_{\mathcal G}=\mathrm{diag}(M_n)_{n\in\mathcal G}$,
$D_{\mathcal G}=\mathrm{diag}(D_n)_{n\in\mathcal G}$,
$T_{\mathcal G}=\mathrm{diag}(\tau_n)_{n\in\mathcal G}$,
$R_{\mathcal G}=\mathrm{diag}(R_n)_{n\in\mathcal G}$,
$K_{\mathcal F}=\mathrm{diag}(K_n)_{n\in\mathcal F}$,
$K_{dc}=\mathrm{diag}(K_{dc,n})_{n\in\mathcal L_{dc}}$. The last two are the frequency-sensitivity matrices. The vector $p_{dc}^q$ stacks $p_{dc,n}^q$ over $n\in\mathcal L_{dc}$. For each applicable bus set $\mathcal X$, $p_{\ell,\mathcal X}$ and $p_{\mathcal X}^{\mathrm{ref}}$ denote the corresponding stacked local loads and power references.}
The differential affine term $b_d$ is constant, collecting power references
scaled by the constant matrices $M_{\mathcal{I}}^{-1}$, $M_{\mathcal{G}}^{-1}$,
and $T_{\mathcal{G}}^{-1}$.
On the other hand, the algebraic affine term $b_a^q$ carries $-p_{dc}^q$ in the datacenter
power-balance rows, with remaining entries determined by constant references. 
 {Assumption~\ref{asm:network} also guarantees that $A_{22}$ is nonsingular as shown in~\cite{ajeyemi2026reachability}. Furthermore, $A$ is nonsingular; to see this, the first block row of $A\xi=0$ implies that all $\mathcal I\cup\mathcal G$ frequencies have a common value $c$. Since $\widehat H\mathbf 1=\mathbf 1$ (which follows from $B\mathbf 1=0$), the divider equations give $\omega_{\mathcal F}=c\mathbf 1$ and $\omega_{dc}=c\mathbf 1$. The governor equations give $p_{m,\mathcal G}=-R_{\mathcal G}^{-1}c\mathbf 1$. Summing the homogeneous power-balance equations over all buses and using $\mathbf 1^\top B=0$ yields $\left(\mathbf 1^\top D_{\mathcal I}\mathbf 1
+\mathbf 1^\top(D_{\mathcal G}+R_{\mathcal G}^{-1})\mathbf 1
+\mathbf 1^\top K_{\mathcal F}\mathbf 1
+\mathbf 1^\top K_{dc}\mathbf 1\right)c=0$. Thus $c=0$; the bus injections, relative angles, and mechanical-power states are subsequently zero. Hence the power-system DAE is regular, has differentiation index one, and has a unique equilibrium in every mode.}
\vspace{2mm}
\par\noindent
\subsection{Switching Signals:}
In \eqref{eq:sadae_ps}, the mode $q\in\mathcal{Q}:=\mathcal{Q}_{\mathrm{tr}}\cup\mathcal{Q}_{\mathrm{in}}$ represents training or inference workloads. In both cases, the active mode is determined by a right-continuous, piecewise-constant switching signal $t\mapsto\sigma(t)$, subject to different conditions for training and inference:
\begin{enumerate}[(a)]
\item \emph{Training Workloads:} The class of switching signals  {$\sigma_{\mathrm{tr}}:\mathbb{R}_{\geq0}\to\mathcal{Q}_{\mathrm{tr}}$} used in training workloads is denoted by $\mathcal{S}_{\text{tr}}$. Since switching patterns in AI training workloads are approximately deterministic, but
not necessarily periodic \cite{latif2024}, we assume only a mild average dwell-time bound on the number of mode transitions $N_{\sigma}(t_1,t_2)$ that can occur over any finite time
interval $[t_1,t_2]$. In particular, a signal  {$\sigma_{\mathrm{tr}}\in\mathcal{S}_{\text{tr}}$} is said to satisfy the \emph{average dwell-time} (ADT) bound if for any two times $t_2 \geq t_1 \geq 0$ in the domain of  {$\sigma_{\mathrm{tr}}$} we have 
\begin{equation}\label{ADTeq}
 N_{\sigma}(t_1,t_2) \leq N_0 + \frac{t_2-t_1}{\tau_d},~~N_0 \geq 1,~\tau_d>0.  
\end{equation}
Signals in $\mathcal{S}_{\mathrm{tr}}$ may be aperiodic, allow arbitrary mode ordering, and permit finitely many consecutive switches, capturing the variable durations and occasional switching bursts observed in training workloads; see, e.g., \cite[Figures 1, 5, 6]{golden2025prism}.
\item \emph{Inference Workloads}: The class of switching signals $\sigma_{\mathrm{in}}:\mathbb{R}_{\geq 0}\to\mathcal{Q}_{\mathrm{in}}$ modeling inference workloads is denoted by $\mathcal{S}_{\mathrm{in}}$. In contrast to training workloads, inference workloads are driven by user requests, whose timing is inherently stochastic \cite{yang2024queueing,burstgpt2024}. Motivated by this behavior, we model signals in $\mathcal{S}_{\mathrm{in}}$ as stochastic switching signals, with each mode $q\in\mathcal{Q}_{\mathrm{in}}$ assigned a probability distribution governing its dwell time. The stochastic assumptions underlying this model are detailed in Section~\ref{sec: stochastic}.
\end{enumerate}

In the next section, we focus on the stability analysis of system \eqref{eq:sadae_ps} for training workloads subject to switching signals belonging to $\mathcal{S}_{\text{tr}}$. The analysis for inference workloads with switching signals in  {$\mathcal{S}_{\mathrm{in}}$} is deferred to Section \ref{sec: stochastic}.
\section{Stability Analysis under Training Workloads: Deterministic Hybrid Systems}
\label{sec:formulation}
\vspace{0.1cm}
To study the regularity and stability properties of the power transmission dynamics \eqref{eq:sadae_ps} under training workloads, we first reformulate the system as a  deterministic hybrid dynamical system of the form \eqref{eqn:hy_sys_def} with $\mathcal{Q}_{\text{in}}=\emptyset$. In particular, for the purpose of generality, we consider a generalized model of \eqref{eq:sadae_ps}, given by the following switched affine DAE:
\begin{equation}
    \label{eqn: affine_DAE}
    E_q \Dot{\xi} = A_q \xi + b_q,
\end{equation}
where $q \in \mathcal{Q}_{\text{tr}} \coloneqq \{1, \dots, l\}$ denotes the current mode of the system, $A_q\in\mathbb{R}^{n\times n}$, $E_q \in \mathbb{R}^{n\times n}$, $b_q \in \mathbb{R}^n$, and $\xi \in \mathbb{R}^n$. We make the following regularity assumption on system \eqref{eqn: affine_DAE}:
\begin{assump}\label{assumpregdae}
For each mode $q \in \mathcal{Q}_{\text{tr}}$, the matrix $A_q$ is invertible. \hfill $\square$
\end{assump}
Under Assumption \ref{assumpregdae}, for each fixed mode $q \in \mathcal{Q}_{\text{tr}}$, system \eqref{eqn: affine_DAE} has a unique equilibrium point given by $\xi^*_q = -A_q^{-1}b_q$. Additionally, under Assumptions~\ref{assumpregdae} and
\ref{assump: regularity}, and the average dwell-time switching
bound~\eqref{ADTeq}, the switched DAE~\eqref{eqn: affine_DAE}
can be represented as the following hybrid dynamical system $\mathcal{H}$ with data $(C,F,D,G)$ and state vector $x = (\xi, q, \tau)$: 
\begin{subequations}
    \label{eqn: eqiv_HI}
    \begin{align}
        \begin{bmatrix}
            \Dot{\xi} \\
            \Dot{q} \\
            \Dot{\tau}
        \end{bmatrix} &\in F(x):= 
        \begin{bmatrix}
            \Pi^{\text{diff}}_{q} (A_q \xi + b_q) \\
            0 \\
            [0, \frac{1}{\tau_d}]
        \end{bmatrix},~x \in C \\[2mm]
        \begin{bmatrix}
            \xi^+\\ 
            q^+ \\
            \tau^+
        \end{bmatrix} &\in G(x):= 
        \begin{bmatrix}
        \left\{\begin{array}{c}\Pi_{z}\xi\\ z\end{array}:z\in\mathcal{Q}_{\text{tr}}\backslash\{q\}\right\}\\
            \tau - 1
        \end{bmatrix},~x \in D
    \end{align}
    where $\Pi_q^{\mathrm{diff}}$ is the differential projector and
$\Pi_z$ is the affine consistency projector; see
Definitions \ref{defn: CP} and \ref{defn: diff_proj} in Appendix~\ref{Appendix:auxdef}.
    
    To define the flow and jump sets of the system, we introduce the \emph{consistency set} of the $q^{th}$ mode of the DAE: 
   \begin{equation}\label{consistencyset}
   \mathcal{C}_q:=  \{\xi \in \mathbb{R}^n: \xi \in (\text{span}(V_q) - W_q b^a_q)\}.
 \end{equation}
In \eqref{consistencyset}, the matrices $V_q \in \mathbb{R}^{n \times n_1^q}$ and $W_q \in \mathbb{R}^{n \times n^q_2}$ define the so-called Quasi-Weirstrass form of the pair $(E_q,A_q)$ (see Lemma \ref{lem: QWF} in Appendix \ref{Appendix:auxdef}), given by:
 \begin{equation}\label{QWform1}
        (S_q E_q T_q, S_q A_q T_q) = \left ( \begin{bmatrix}
            I_{n^q_1} & 0 \\
            0 & N_q
        \end{bmatrix}, \begin{bmatrix}
            J_q & 0 \\
            0 & I_{n^q_2}
        \end{bmatrix} \right ),
    \end{equation}
where $S_q \coloneqq [E_qV_q, A_qW_q]^{-1}$, $T_q:=[V_q,W_q]$, $J_q \in \mathbb{R}^{n_1^q \times n^q_1}$, and  $N_q \in \mathbb{R}^{n_2^q \times n_2^q}$ is a nilpotent matrix with $(N_q)^{n_2^q} = 0$, $n_1^q + n_2^q = n$.  The following assumption is needed to guarantee the regularity of the solutions to \eqref{eqn: affine_DAE}:
\begin{assump}
\label{assump: regularity}
For each $q \in \mathcal{Q}_{\text{tr}}$,  the pair $(E_q,A_q)$ is \emph{regular}, i.e., the characteristic polynomial $\text{det}(sE_q -A_q)$ is not identically zero. Moreover, the Quasi-Weierstrass form \eqref{QWform1} of the pair $(E_q, A_q)$ is such that the nilpotent block $N_q$  has index at most one, i.e., $N_q = 0$. 
\hfill \qed
\end{assump}
\begin{rem}\normalfont{
A nonzero characteristic polynomial guarantees the quasi-Weierstrass decomposition \eqref{QWform1}, but may allow higher-index nilpotent blocks and thus physically meaningless impulsive solutions. Assumption~\ref{assump: regularity} rules these out and ensures well-defined solutions on the consistency set; see Lemma~\ref{lem: QWF}. For the power-system model in Section~\ref{sec:power}, the algebraic equations are explicit, yielding $N_q=0$, so the assumption holds automatically.}\hfill \qed
\end{rem}

Using the consistency set \eqref{consistencyset}, the flow and jump sets of the HDS are defined as follows: 
    \begin{align}
    C &:= \bigcup_{q \in \mathcal{Q}_{\text{tr}}} C_q,~~~~C_q\coloneqq \mathcal{C}_q \times \{q\} \times [0,N_0],\\
        D &:= \bigcup_{q \in \mathcal{Q}_{\text{tr}}} D_q,~~~~D_q:= \mathcal{C}_q \times \{q\} \times [1, N_0]. \label{jumpset01}
    \end{align}

\end{subequations} 
In the resulting HDS \eqref{eqn: eqiv_HI}, the state $\tau$ acts as a timer that regulates the frequency of the jumps. In particular, by the structure of \eqref{jumpset01}, jumps are allowed only when $\tau\geq 1$. Each jump triggers a switch on the value of the mode $q$ that parametrizes the continuous-time dynamics of the system. Since the maximum rate of increase of $\tau$ is limited by the parameter $1/
\tau_d$, by the structure of the flow and jump set, the switching frequency in the resulting HDS satisfies the ADT constraint, see \cite[Example 2.15]{teel2013lyapunov}. This mechanism rules out Zeno behavior, but allows a finite number of arbitrarily fast switches. Note that, in general, the equilibrium point of the dynamics $\dot{\xi}$ is mode-dependent.  We make the following stability assumption on each mode:
\begin{assump}
    \label{assump: matrices}
For each $q \in \mathcal{Q}_{\mathrm{tr}}$, there exist symmetric matrices
$P_q,M_q \in \mathbb{R}^{n\times n}$ that are positive definite on
$\operatorname{span}(V_q)$ and satisfy
\[
\eta^\top
\left(
A_q^\top \Pi_q^{\mathrm{diff}\top} P_q
+
P_q \Pi_q^{\mathrm{diff}} A_q
\right)\eta
\leq
-\eta^\top M_q \eta,
\]
for all $\eta \in \operatorname{span}(V_q)$. \hfill \qed
\end{assump}
In essence, Assumption~\ref{assump: matrices} requires that, for each mode $q\in\mathcal{Q}_{\mathrm{tr}}$, the linear component of the $\xi$-dynamics satisfies a standard mode-dependent Lyapunov inequality. Note that, since the consistency set $\mathcal C_q$ is in general affine, the shifted state
$\tilde{\xi}=\xi-\xi_q^\ast$ (used in the stability analysis) does not evolve on
$\mathcal C_q$. Rather, since both $\xi$ and $\xi_q^\ast$
belong to $\mathcal C_q$, we have $\tilde{\xi}\in
\mathcal C_q-\mathcal C_q
=
\operatorname{span}(V_q)$.
Consequently, all positive-definiteness and quadratic-form conditions involving the shifted state are imposed on
$\operatorname{span}(V_q)$, the linear direction space of $\mathcal C_q$.

\subsection{Global Uniform Ultimate Boundedness under Average-Dwell Time Switching}
\label{subsec:sgpas}
To analyze the stability properties of system \eqref{eqn: eqiv_HI}, we introduce the change of variables $x \mapsto (\Tilde{\xi}, q, \tau) =:y$ where $\Tilde{\xi} = \xi -\xi^*_q\in\texttt{span}(V_q)$ and $\xi^*_q $ is the \emph{unique} point in $\mathcal{C}_q$ that satisfies $A_q \xi^*_q + b_q = 0$. Such a point exists due to Assumption \ref{assumpregdae}. For each $q\in\mathcal{Q}_{tr}$, the dynamics of $\Tilde{\xi}$ during flows are
\begin{align*}
    \Dot{\Tilde{\xi}} & =  \Pi_q^{\text{diff}} (A_q \xi + b_q)=  \Pi_q^{\text{diff}} (A_q (\Tilde{\xi} + \xi_q^*) + b_q)= \Pi_q^{\text{diff}} A_q \Tilde{\xi}.
\end{align*}
During jumps, using Definition \ref{defn: CP}:
\begin{align*}
    \Tilde{\xi}^+ 
    = L_{q^+}\tilde{\xi} + \Delta_{q,q^+}. 
\end{align*}
where $\Delta_{q,q^+}=L_{q^+}\xi^*_q + d_{q^+} - \xi^*_{q^+}$.
 {Note that the residuals $\Delta_{q,z}$ vanish for every admissible transition
$q\to z$ if and only if $\Pi_z\xi_q^*=\xi_z^*$ for every
$q,z\in\mathcal{Q}_{\mathrm{tr}}$ with $q\neq z$. This condition expresses
compatibility of the  equilibria of the modes under the consistency projections; it does not, in general, require the equilibrium vectors $\xi_q^*$ and
$\xi_z^*$ to coincide.}

In the $y$-coordinates, the system is described by the following HDS:
\begin{subequations}
    \label{eqn: equiv_2}
    \begin{align}
    &\dot{\Tilde{\xi}}= \Pi^{\text{diff}}_{q} A_q \Tilde{\xi},~~~\dot{q}=0,~~~\dot{\tau}\in\left[0, \frac{1}{\tau_d}\right]
        ~~~y \in C, \\[2mm]
        &\begin{bmatrix}
            \Tilde{\xi} ^+\\
            q^+\\
            \tau^+
        \end{bmatrix}\in 
        \begin{bmatrix}
        \left\{\begin{array}{c}L_{z}\Tilde{\xi} + \Delta_{q,z}\\z\end{array}:z\in\mathcal{Q}_{\text{tr}}\backslash\{q\}\right\}\\
            \tau - 1
        \end{bmatrix},~~y \in D,
    \end{align}
\end{subequations}
where $\mathcal{C}_q$ is substituted by $\text{span}(V_q)$ in the definitions of $C$ and $D$. To study the stability properties of system \eqref{eqn: equiv_2}, we define the following compact set: 
\begin{equation}
    \label{eqn: set_A}
    \mathcal{A} \coloneqq \{0\}  \times \mathcal{Q}_{\text{tr}}\times [0,N_0],
\end{equation}
as well as the following constants:
\begin{subequations}
    \begin{align}
        \lambda &\coloneqq \frac{\min_{q \in  {\mathcal{Q}_{\mathrm{tr}}}} \lambda_{\text{min}} (O^\top_q M_q O_q)} {\max_{q \in  {\mathcal{Q}_{\mathrm{tr}}}} \lambda_{\text{max}}(O^\top_q P_q O_q)}, \\
        \gamma & \coloneqq \frac{\max_{q \in  {\mathcal{Q}_{\mathrm{tr}}}}\lambda_{\text{max}} (O_{q}^\top P_{q} O_{q}) \cdot  \max_{q \in  {\mathcal{Q}_{\mathrm{tr}}}}\lVert \textcolor{black}{L_{q}} \rVert_2^2}{\min_{q \in  {\mathcal{Q}_{\mathrm{tr}}}}\lambda_{\text{min}} (O_q^\top P_q O_q)}\\
        \alpha & \coloneqq \max_{q\in\mathcal{Q}_{\text{tr}}}\max_{r\in\mathcal{Q}_{\text{tr}}\setminus\{q\}}
\Delta_{q,r}^\top P_r \Delta_{q,r}\\
\underline{p}&:= \min_{q\in {\mathcal{Q}_{\mathrm{tr}}}}\lambda_{\min}(O_q^\top P_q O_q),~\overline{p}:= \max_{q\in {\mathcal{Q}_{\mathrm{tr}}}}\lambda_{\max}(O_q^\top P_q O_q).
    \end{align}
\end{subequations}
By Lemmas \ref{lem:PD} and \ref{lem: eig_vals} in the Appendix, we have that
 $\underline{p},\overline{p},\lambda,\gamma>0$ and $\alpha\geq0$. The following theorem is the first main result of this section. The proof is presented in Appendix \ref{appen:proofs}.
\begin{thm}
\label{thm:ISS_like}
Suppose that Assumptions~\ref{assumpregdae}-\ref{assump: matrices} hold.
Choose any $ {\mu>\max\{0,\ln\gamma\}}$ and any
$\theta\in(0,e^\mu/\gamma-1)$, and suppose that
$\mu/\lambda<\tau_d$. Let
$\kappa:=e^{-\mu}(1+\theta)\gamma$, $\eta:=\min\left\{
\lambda-\frac{\mu}{\tau_d},-\ln\kappa
\right\}$. Then, every solution of system~\eqref{eqn: equiv_2} satisfies the following bound for all $(t,j) \in \mathrm{dom}(y)$:
\begin{equation}\label{eq: kl_bound}
    |y(t,j)|_{\mathcal{A}} \;\leq\; \beta\!\left(|y(0,0)|_{\mathcal{A}},\, t+j\right) + \nu(\alpha),
\end{equation}
where $\beta(r,s):= (\frac{{\rm e}^{\mu N_0}\,\overline{p}}{\underline{p}})^{0.5}\rm e^{-\eta s/2}\, r$, and $\nu(\alpha):=\sqrt{(\bar{c}/\underline{c})\alpha}$,  $\bar{c}={\rm e}^{\mu N_0}\bigl(1+\tfrac{1}{\theta}\bigr)$, $\underline{c}=\underline{p}\,\bigl({\rm e}^{\mu} - (1+\theta)\gamma\bigr)$.
\hfill $\square$
\end{thm}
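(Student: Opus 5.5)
The plan is a standard ADT Lyapunov argument with a timer-weighted Lyapunov function. Take $V_q(\tilde\xi):=\tilde\xi^\top P_q\tilde\xi$ and the hybrid candidate
\[
W(y):=e^{\mu\tau}\,V_q(\tilde\xi),\qquad y=(\tilde\xi,q,\tau).
\]
On $\operatorname{span}(V_q)$ we write $\tilde\xi=O_q z$, where $O_q$ has orthonormal columns spanning $\operatorname{span}(V_q)$. Then
\[
\underline{p}|\tilde\xi|^2\le V_q(\tilde\xi)\le \overline{p}|\tilde\xi|^2,
\]
and since $\tau\in[0,N_0]$ we get $\underline{p}|y|_{\mathcal A}^2\le W(y)\le e^{\mu N_0}\overline{p}|y|_{\mathcal A}^2$. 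This uses $|y|_{\mathcal A}=|\tilde\xi|$, because $\mathcal A$ contains all admissible $q,\tau$. The positivity of these constants comes from Lemmas~\ref{lem:PD} and \ref{lem: eig_vals}.

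\textbf{Flows.} Assumption~\ref{assump: matrices} gives $\dot V_q\le -\tilde\xi^\top M_q\tilde\xi\le -\lambda V_q$. This holds because $\dot{\tilde\xi}=\Pi_q^{\rm diff}A_q\tilde\xi$ keeps $\tilde\xi$ in $\operatorname{span}(V_q)$, the differential projector maps into that span. Since $\dot\tau\le 1/\tau_d$, along flows
\[
\dot W\le -(\lambda-\mu/\tau_d)W\le -\eta W.
\]

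\textbf{Jumps.} We have $\tilde\xi^+=L_z\tilde\xi+\Delta_{q,z}$ and $\tau^+=\tau-1$. Young's inequality in the $P_z$-norm gives
\[
V_z(\tilde\xi^+)\le (1+\theta)\,(L_z\tilde\xi)^\top P_z L_z\tilde\xi+(1+1/\theta)\,\Delta_{q,z}^\top P_z\Delta_{q,z}.
\]
The first term is at most $(1+\theta)\overline{p}\|L_z\|^2|\tilde\xi|^2\le (1+\theta)\gamma V_q(\tilde\xi)$. The second is at most $(1+1/\theta)\alpha$. One must check that $L_z\tilde\xi\in\operatorname{span}(V_z)$, which holds by the definition of the consistency projector, so that the $\overline{p}$ bound applies. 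Multiplying by $e^{\mu(\tau-1)}$ and using $e^{\mu\tau}\le e^{\mu N_0}$ on the residual term,
\[
W^+\le \kappa W+\bar c\,\alpha,\qquad \kappa=e^{-\mu}(1+\theta)\gamma<1 .
\]
Here $\kappa<1$ follows from the choice of $\theta$, and it gives $\kappa\le e^{-\eta}$.

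\textbf{Combining.} On a hybrid arc with jump times $t_1\le\dots\le t_j$, we iterate the flow decay and jump contraction:
\[
W(t,j)\le e^{-\eta(t+j)}W(0,0)+\bar c\,\alpha\sum_{i=0}^{j-1}\kappa^{i}.
\]
Flows contribute $e^{-\eta t}$ overall and jumps contribute $\kappa^j\le e^{-\eta j}$, and the residual injected at each jump is subsequently damped. The geometric sum is bounded by $1/(1-\kappa)$, so the residual term is at most
\[
\frac{\bar c\,\alpha}{1-\kappa}=\frac{e^{\mu}\bar c\,\alpha}{e^{\mu}-(1+\theta)\gamma}.
\]
Dividing by $\underline{p}$ and taking square roots, and using $\sqrt{a+b}\le\sqrt a+\sqrt b$, yields the $\beta$ term exactly and a $\nu(\alpha)$ of the stated form.

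The main obstacle is the bookkeeping of the residual constant. Matching $\underline c=\underline p(e^\mu-(1+\theta)\gamma)$ exactly requires care about where the $e^{\mu\tau}$ factor sits: bound the residual by $e^{\mu(N_0-1)}(1+1/\theta)\alpha$ at the jump and then absorb the stray factor $e^{\mu}$ from $1/(1-\kappa)$, so that $e^{\mu(N_0-1)}\cdot e^{\mu}=e^{\mu N_0}$. A second point to check is that flow decay interleaved with jumps does not spoil the geometric sum, which holds since flows only decrease $W$ further.
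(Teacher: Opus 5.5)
Your proposal is correct and follows the paper's proof essentially step for step: the same timer-weighted function $e^{\mu\tau}\tilde\xi^\top P_q\tilde\xi$, the flow rate $\lambda-\mu/\tau_d$, the Young-inequality jump bound of the form $\kappa W + c$, and the iterated recursion using $\kappa^j\le e^{-\eta j}$ and a geometric sum to get $c/(1-\kappa)$. The one correction is to your displayed jump residual: it should be $e^{\mu(N_0-1)}(1+1/\theta)\alpha$ (the paper's $c$), not $\bar c\,\alpha$, as you note yourself in your last paragraph, and with that constant $\nu(\alpha)$ comes out exactly.
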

\begin{rem}\normalfont{
Theorem~\ref{thm:ISS_like} gives a lower bound on $\tau_d$, and thus on the switching rate, guaranteeing global practical stability of \eqref{eqn: equiv_2} with respect to $\mathcal{A}$. The residual neighborhood $\nu(\alpha)$ depends on the maximum projected-equilibrium mismatch $\alpha$, rather than the distance between the original equilibria. In particular, $\alpha=0$ iff $\Pi_r(\xi_q^*)=\xi_r^*$ for all admissible transitions, in which case $\mathcal{A}$ is UGAS. This includes the common-equilibrium setting of~\cite{trenn2012switched}, while also allowing mode-dependent equilibria compatible under the consistency projections.
} \hfill \qed
\end{rem}
\subsection{Characterization of the Limiting Behavior of the Switched DAE: $\Omega$-Limit Sets}
While Theorem \ref{thm:ISS_like} provides a $\mathcal{K}\mathcal{L}$ bound on the trajectories of the system that holds for all time, provided $\tau_d$ is lower-bounded by the ratio $\mu/\lambda$, a sharper characterization of the limiting behavior of the HDS as $t+j\to\infty$ can be obtained by characterizing its associated $\Omega$-limit whenever $\tau_d$ is ``sufficiently large'' and no two consecutive switches are allowed, i.e., $\tau_d\to\infty$ and $N_0=1$. To study this case, for each $q \in \mathcal{Q}_{\text{tr}}$, let $\Theta_q^t(\xi_0)$ denote the solution to the DAE $E_q\Dot{\xi} = A_q \xi + b_q$ starting from a consistent initial condition $\xi_0 \in \mathcal{C}_q$, and evaluated at time $t$. Using \cite[Theorem 6. 4.4]{trenn2012switched}, it follows that
\begin{align}
    \label{eqn: dae_sol}
    \Theta_q^t(\xi_0) = &e^{A^{\text{diff}}_{q} t} (\xi_0 + \Pi_q^\text{imp}b_q) + \\ 
    &\int_0^t e^{A^\text{diff}_{q}(t-s)} \nonumber \Pi_q^\text{diff} b_q \, ds - \Pi_q^\text{imp} b_q, 
\end{align}
where $A_q^{\text{diff}} = \Pi^\text{diff}_{q} A_q$. Using the form of \eqref{eqn: dae_sol}, as well as Definitions \ref{definitionReachable} and \ref{defn:omega_set}, we now state the second main result of this section. The proof is presented in the Appendix.
\begin{thm}
    \label{thm: SGPAS}
    Let $N_0=1$ and suppose that Assumptions \ref{assumpregdae},  \ref{assump: regularity}, and \ref{assump: matrices} hold. Let 
\begin{align}
\mathcal{A}
:=
\bigcup_{q\in\mathcal{Q}_{\mathrm{tr}}}
\Bigg[
&\{\xi_q^\star\}\times\{q\}\times[0,1]\notag
\\
&{}\cup
\bigcup_{\substack{r\in\mathcal{Q}_{\mathrm{tr}}\\ r\neq q}}
\overline{
\left\{
\Theta_r^t\bigl(\Pi_r(\xi_q^\star)\bigr)
:
t\geq 0
\right\}}
\times\{r\}\times\{0\}
\Bigg].
\end{align}
    %
    Then, the set $\mathcal{A}$ is SGPAS as $1/\tau_d \to 0^+$ for system \eqref{eqn: eqiv_HI}.
    %
\end{thm}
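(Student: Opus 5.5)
The plan is a direct trajectory-based argument that exploits the time-scale separation created by $\varepsilon:=1/\tau_d\to0^+$ with $N_0=1$. It rests on three facts.

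\emph{Dwell time.} Since $\tau\in[0,1]$, jumps require $\tau\geq1$, and $\dot\tau\le\varepsilon$, any two consecutive jumps are separated by at least $\tau_d=1/\varepsilon$ units of flow time. Only the first jump can occur immediately, if $\tau(0,0)=1$.

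\emph{Mode-wise convergence.} By Assumption~\ref{assump: matrices}, each mode-$q$ flow, written in the shifted coordinate $\tilde\xi=\xi-\xi_q^\star\in\operatorname{span}(V_q)$, satisfies
\[
|\xi(t)-\xi_q^\star|\le c_0 e^{-\lambda_0 t}|\xi(0)-\xi_q^\star|
\]
for uniform constants $c_0,\lambda_0>0$. These come from $\underline p,\overline p$ and the ratio $\lambda$ of Lemma~\ref{lem: eig_vals}.

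\emph{Transients stay in $\mathcal A$.} Every post-jump trajectory $\Theta_r^t(\Pi_r(\xi_q^\star))$ converges to $\xi_r^\star$. Its orbit closure is therefore compact and contains $\xi_r^\star$, so $\mathcal A$ is compact.

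With these facts, the trajectory is analyzed in three phases, each yielding a contribution to a $\mathcal{KL}$ bound plus a $\nu$-residual.

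\emph{Phase 1 (initial transient).} Fix $\mu,\nu>0$ and initial data with $|x(0,0)|_{\mathcal A}\le\mu$. The state $\xi$ then lies in a compact set $K_\mu$. Before the first jump, $\xi$ approaches $\xi_{q_0}^\star$ exponentially. The set $\{\xi_{q_0}^\star\}\times\{q_0\}\times[0,1]\subset\mathcal A$ absorbs the $\tau$ coordinate, so the distance to $\mathcal A$ obeys an exponential bound in $t$. If an immediate jump happens at $j=0$, the nonexpansiveness of the affine maps $\Pi_z$ on compact sets, via $|L_z|$, gives a bound $\le c_1|x(0,0)|_{\mathcal A}+c_2$. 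To keep the bound of $\mathcal{KL}$-type in $|x(0,0)|_{\mathcal A}$ rather than affine, this step must be handled carefully. The key point is that if $\xi(0,0)$ is close to some point of $\mathcal A$, then $\Pi_z\xi(0,0)$ is close to the corresponding projected point. Such points lie either in $\mathcal A$ or within the reachable transient of some $\Theta_z$, so continuity of $\Theta_z^t$ in the initial condition, uniformly on compact time intervals, does the job.

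\emph{Phase 2 (steady state before each subsequent jump).} Choose $T_\nu$ such that $c_0e^{-\lambda_0 T_\nu}\operatorname{diam}(\cdot)\le\delta$, where $\delta$ is small. Once $\varepsilon<\varepsilon^*$ is small enough that $1/\varepsilon\ge T_\nu$, every jump after the first occurs at a state with $|\xi-\xi_q^\star|\le\delta$.

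\emph{Phase 3 (post-jump transients).} After a jump to mode $r$ from such a state, continuity of $\Theta_r^t$ together with exponential stability yields
\[
\sup_{t\ge0}\bigl|\Theta_r^t(\Pi_r\xi)-\Theta_r^t(\Pi_r\xi_q^\star)\bigr|\le c_0|L_r|\delta .
\]
This sup is uniform over all $t$, not only compact intervals, because both trajectories converge to $\xi_r^\star$. So $\xi$ stays within $O(\delta)$ of the $r$-branch of $\mathcal A$.

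The $\tau$ coordinate must be handled as well, since the transient part of $\mathcal A$ carries $\tau=0$. During the first $T_\nu$ time units after a jump, $\tau\le\varepsilon T_\nu$, which is small. After that time, $\xi$ is within $\delta$ of $\xi_r^\star$, and the stationary component $\{\xi_r^\star\}\times\{r\}\times[0,1]$ covers every $\tau\in[0,1]$. Taking $\delta$ and then $\varepsilon^*$ small gives a residual of at most $\nu$. Combining the three phases with the exponential bound from Phase 1 produces $\beta\in\mathcal{KL}$, independent of $\mu,\nu$, with
\[
|x(t,j)|_{\mathcal A}\le\beta\bigl(|x(0,0)|_{\mathcal A},t+j\bigr)+\nu .
\]
The $j$-dependence in $\beta$ is harmless because jumps are sparse: $j\le1+\varepsilon t$.

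\emph{Main obstacle and fallback.} The delicate step is making $\beta$ genuinely class-$\mathcal{KL}$ and independent of $(\mu,\nu)$ near $\mathcal A$, particularly for initial conditions on the transient branches with $\tau$ near $0$, together with the possible immediate first jump. If the direct construction becomes messy, I would instead invoke the general robustness machinery for hybrid systems \cite{goebel2012hybrid}, in the spirit of the $\omega$-limit results of \cite{baradaran2020omega}. That route has three steps:
\begin{enumerate}
\item Show that $\mathcal A$ is the $\Omega$-limit set of an appropriately defined nominal (reduced) hybrid system, namely ``flow to the equilibrium, then jump through $\Pi_r$,'' and that this set is compact and asymptotically stable for the nominal system.
\item Verify that the data of \eqref{eqn: eqiv_HI} satisfy the hybrid basic conditions, which follows from the closed sets $C,D$, the outer semicontinuous and locally bounded maps $F,G$, and the convexity of $F$.
\item Regard $\varepsilon>0$ as a small perturbation and conclude semiglobal practical asymptotic stability from the robustness of $\mathcal{KL}$ bounds under vanishing perturbations.
\end{enumerate}
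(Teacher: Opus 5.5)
Your proposal is correct. Your main route is a direct trajectory argument on the $\varepsilon$-system, whereas the paper's proof is essentially your fallback.

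\textbf{The paper's route.} The paper passes to the nominal system \eqref{eqn: ideal_hybrid} with $\dot\tau=0$. For $N_0=1$ this system has at most one jump, but the jump can be delayed arbitrarily, which is why the transient branches belong to the $\Omega$-limit set. Proposition~\ref{prop: ols_dae} identifies that $\Omega$-limit set with $\mathcal A$, using three ingredients:
\begin{itemize}
\item the solution formula \eqref{eqn: dae_sol};
\item the Lipschitz constant $|L_{q^+}|$ of $\Pi_{q^+}$;
\item the uniform-in-time estimate $|\Theta_q^t(\xi)-\Theta_q^t(\xi')|\le c_2|\xi-\xi'|$, which is exactly your Phase~3 estimate.
\end{itemize}
After checking the hybrid basic conditions, the paper gets global asymptotic stability of $\mathcal A$ for the nominal system from \cite[Corollary~7.7]{goebel2012hybrid}. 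It then obtains SGPAS for $\varepsilon=1/\tau_d>0$ from the robustness result \cite[Thm.~7.21]{goebel2012hybrid}.

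\textbf{What each route buys.} The abstract route handles precisely the step you flag as delicate. A single $\beta\in\mathcal{KL}$, independent of $(\mu,\nu)$, is inherited from the nominal $\mathcal{KL}$ bound by general theory. That $\beta$ also covers initial data on the transient branches with $\tau\approx0$ and an immediate first jump. The price is that nothing is explicit.

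Your direct route requires writing out the case analysis: whether the nearest point of $\mathcal A$ lies on a stationary or a transient component, the drift bound $\tau\le\varepsilon(t-t_1)$ while $\xi$ is still far from $\xi_r^\star$, and the jump count $j\le 1+\varepsilon t$. Once that is done, it yields explicit data without needing the $\Omega$-limit characterization or well-posedness. For instance, you can take $\beta(r,s)=\min\{Cr,\,C'e^{-as}(r+K)\}$, with $K$ bounding the $\xi$-extent of $\mathcal A$. You can fix $\varepsilon^*$ by $c_0e^{-\lambda_0/\varepsilon}R_\mu\le\delta$ and $\varepsilon T_\nu\le\nu$, where $R_\mu$ bounds the reachable $\xi$-set from the $\mu$-ball.

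\textbf{Two small corrections.}
\begin{itemize}
\item $\Pi_z$ is an oblique projection. It is Lipschitz with constant $|L_z|$, but it is not nonexpansive in general.
\item The uniform-in-$t$ tracking bound holds because $\Theta_r^t(a)-\Theta_r^t(b)=e^{A_r^{\mathrm{diff}}t}(a-b)$ with $a-b\in\operatorname{span}(V_r)$. It does not follow merely from both trajectories converging to $\xi_r^\star$. For the same reason, in Phase~1 you should use this global estimate rather than continuity on compact time intervals.
\end{itemize}
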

\begin{rem}\normalfont{
Compared with Theorem~\ref{thm:ISS_like}, Theorem~\ref{thm: SGPAS} more precisely characterizes the asymptotic behavior for sufficiently large $\tau_d$. In particular, $\mathcal Y:=\operatorname{proj}_{\xi}(\mathcal A)$ consists of the mode-dependent equilibria and the trajectories flowing from $\Pi_r(\xi_q^\star)$ toward $\xi_r^\star$, for $r\neq q$. Theorem~\ref{thm: SGPAS} guarantees convergence to an arbitrarily small neighborhood of $\mathcal Y$.
}\hfill\qed
\end{rem}

Before applying Theorems~\ref{thm:ISS_like} and~\ref{thm: SGPAS} to the power transmission system introduced in Section~III, we illustrate the main theoretical results through two low-dimensional numerical examples in $\mathbb{R}^2$ and $\mathbb{R}^3$, respectively. The first example is adapted from~\cite[Example~7]{nanez2017invariance} to account for mode-dependent equilibria.
\begin{ex}
\label{ex: example_7_nanez}
Consider the switched DAE with data $E_1=[0,1;0,0]$, $A_1 = [0, -1;1, -1]$, $b_1 = [1,0]$, $E_2=[0,0;0,1]$, $A_2=[1, 0;0, -1]$, $b_2 =[2,-2]$. The dynamics of mode $1$ are given by $\Dot{\xi}_2= -\xi_2 + 1$, $\xi_1= \xi_2$. Similarly, mode 2 has dynamics given by  $\Dot{\xi}_2= -\xi_2 - 2$ $\xi_1 = -2$. The equilibria of the two subsystems are $\xi^*_1 =[1, 1]$
 and $\xi^*_2=[-2, -2]$, respectively. 
%
%
The consistency sets of the two modes are $\mathcal{C}_1=\left\{[s;s]:s\in\mathbb{R}\right\}=
    \operatorname{span}
    \left\{[1;1]
    \right\}$,
        $\mathcal{C}_2=
    \left\{[-2;s]:s\in\mathbb{R}\right\}
    =[-2;0]+
    \operatorname{span}
    \left\{[0;1]\right\}$. Using the quasi-Weierstrass form, the corresponding affine
consistency projectors $\Pi_q(\xi)=L_q\xi+d_q$ are $\Pi_1(\xi)=[0,1;0,1]\xi=[\xi_2;\xi_2]$, $\Pi_2(\xi)=[0,0;0,1]\xi+[-2;0]$=$[-2;\xi_2]$. Thus, $L_1=[0,1;0,1]$, $L_2=[0,0;0,1]$, $d_1=0$, $d_2=[-2;0]$.
Similarly, the differential projectors are given by  $\Pi^{\text{diff}}_1 =[1,0;1,0]$, $\Pi^{\text{diff}}_2 =[0, 0;0,1]$. A simple computation shows that the matrices $P_1 = [1, 0; 0, 0]$ and $P_2 =[0, 0; 0, 1]$ are positive definite on $\text{span}(V_1)$ and $\text{span}(V_2)$ respectively. Choosing $O_1 = \frac{1}{\sqrt{2}} \begin{pmatrix}
    1 & 1
\end{pmatrix}$ and $O_2 = \begin{pmatrix}
    0 & 1
\end{pmatrix}$, yet another computation shows that the matrices $M_1 =[0,1;1,0]$ and $M_2 =[0, 0; 0, 2]$ are positive definite on
$\operatorname{span}(V_1)$ and $\operatorname{span}(V_2)$,
respectively. From these matrices, the value of $\lambda$ and $\gamma$ are $1$ and $4$, respectively. We choose $N_0 = 2$ here. 
To satify the conditions of Theorem \ref{thm:ISS_like} we take $\mu = 2$, $\theta = 0.35 \in (0, e^{\mu}/\gamma -
1)$, $\frac{1}{\tau_d} = 0.4 < \lambda/\mu = 0.5$. This implies that 
$\kappa = e^{-2}(1.35)(4) \approx 0.731$ which lies in the interval $(0,1)$ and
$\eta = \min\{\lambda - \mu\cdot \sfrac{1}{\tau_d},\, -\ln\kappa\}
= \min\{0.2,\, 0.313\} = 0.2$.
Figure \ref{fig:thm1} shows the trajectories of the system, illustrating the result of Theorem \ref{thm:ISS_like}. \hfill \qed
\end{ex}
\begin{figure}[t!]
    \centering
    \includegraphics[width=0.99\linewidth]{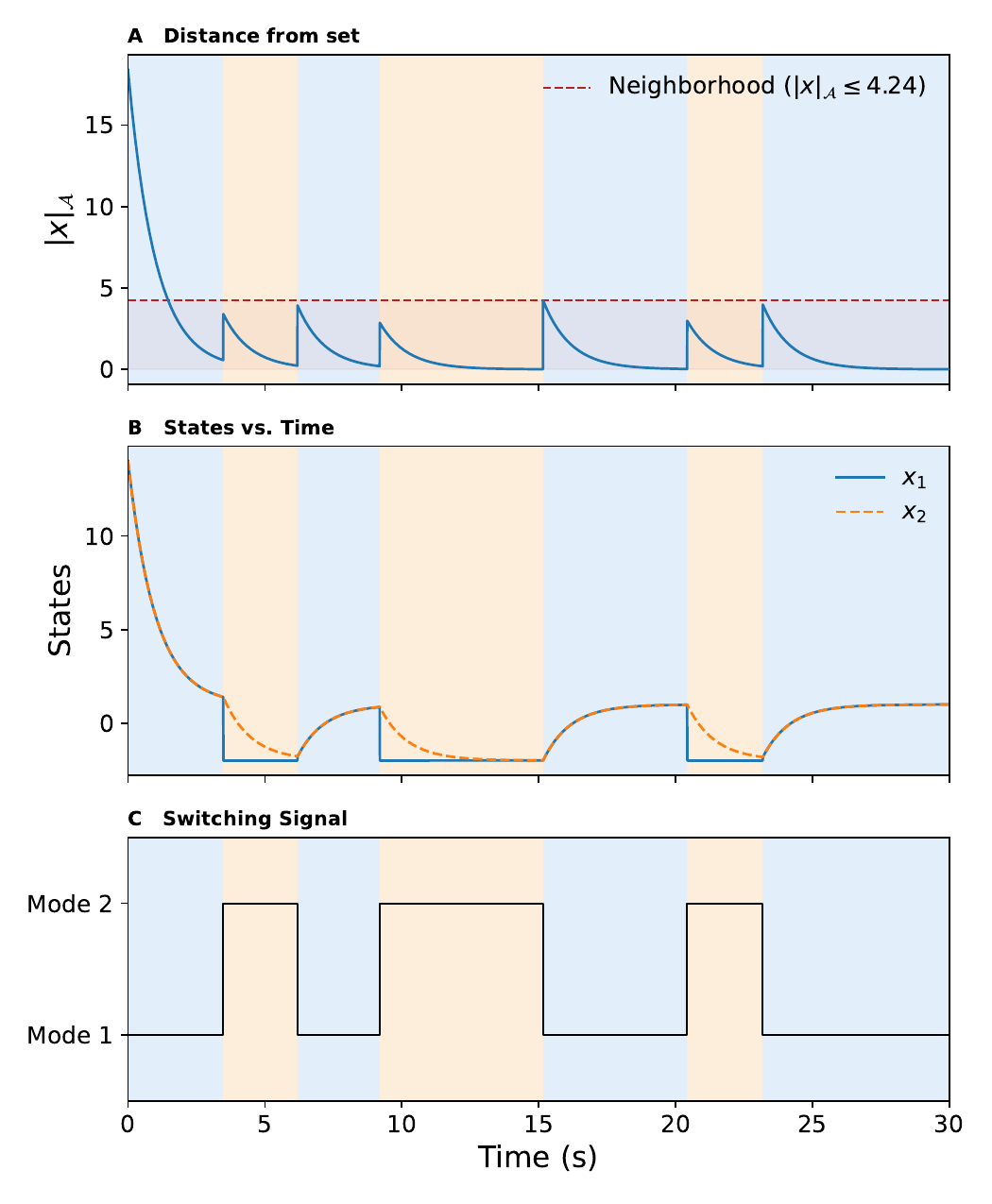}
    \caption{Example~\ref{ex: example_7_nanez} illustrating Theorem~\ref{thm:ISS_like}: (A) distance $|x|_{\mathcal A}$, (B) state trajectories $x_1,x_2$, and (C) switching signal.} 
    \label{fig:thm1}
\end{figure}
\begin{ex}
\label{ex:spiral}
To illustrate the set $\mathcal{Y}$, consider a two-mode system in $\mathbb{R}^3$ with $\xi_3$ as the algebraic
variable. The dynamics of the first mode are given by $\Dot{\xi}_1= -\xi_1 - 2\xi_2$, $\Dot{\xi}_2= \phantom{-}2\xi_1 - \xi_2$, $0=\xi_3$. The second mode has dynamics $\Dot{\xi}_1= -\xi_1 + 3\xi_2 + 1$, $\Dot{\xi}_2= -3\xi_1 - \xi_2 + 3$, $0 = \xi_3 - 1$. The consistency spaces $\mathcal{C}_1=\{\xi_3=0\}$ and
$\mathcal{C}_2=\{\xi_3=1\}$ are parallel horizontal planes in
$\mathbb{R}^3$, separated by the offset in $b^q$.
In standard form, both modes share $E_1 = E_2 =[1,0,0;0,1,0;0,0,0]$ with $b_1=(0,0,0)$, $b_2=(1,3,-1)$, and $A_1=[-1,-2,0;2,-1,0;0,0,1]$, $A_2=[-1,3,0;-3,-1,0;0,0,1]$.
%
%
The differential projectors are identical for both modes, and the
consistency projectors differ only in their affine offset: $\Pi_1^{\mathrm{diff}}=\Pi_2^{\mathrm{diff}} =[1,0,0;0,1,0;0,0,0]$, $\Pi_1(\xi)=[\xi_1,\xi_2,0]$, $\Pi_2(\xi) =[\xi_1,\xi_2,1]$, with equilibria $\xi_1^* = (0,0,0) \in \mathcal{C}_1$ and
$\xi_2^* = (1,0,1) \in \mathcal{C}_2$. The set
$\mathcal{Y}=\mathcal{Y}_1\cup\mathcal{Y}_2$
predicted by Theorem~\ref{thm: SGPAS} consists of the two
closed spiral orbits $\mathcal{Y}_1=
\overline{
\left\{
\Theta_2^t\bigl(\Pi_2(\xi_1^\star)\bigr)
:
t\geq0
\right\}}$, and $\mathcal{Y}_2=
\overline{
\left\{
\Theta_1^t\bigl(\Pi_1(\xi_2^\star)\bigr)
:
t\geq0
\right\}}$. Figure~\ref{fig:omega_limit} shows the structure of
$\mathcal{Y}$ which we recall, is the projection of the set $\mathcal{A}$ onto the $\xi$ coordinates, together with the equilibria and consistency projectors.
Figure~\ref{fig:trajectories} shows system trajectories
converging to $\mathcal{Y}$. \hfill \qed
\end{ex}
\begin{figure}[t!]
    \centering
    \includegraphics[width=0.97\linewidth]{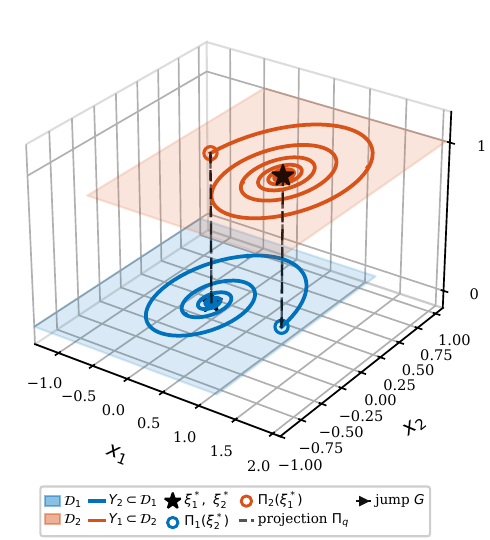}
    \caption{Set $\mathcal{Y}=\mathcal{Y}_1\cup\mathcal{Y}_2$ for Example~\ref{ex:spiral}, projected onto $(x_1,x_2)$. The curves connect the projected equilibria $\Pi_q(\xi_{q'}^*)$ to $\xi_q^*$, with dotted arrows indicating the consistency projections.}
    \label{fig:omega_limit}
    \vspace{-0.4cm}
\end{figure}
\begin{figure}[t!]
    \centering
    \includegraphics[width=0.99\linewidth]{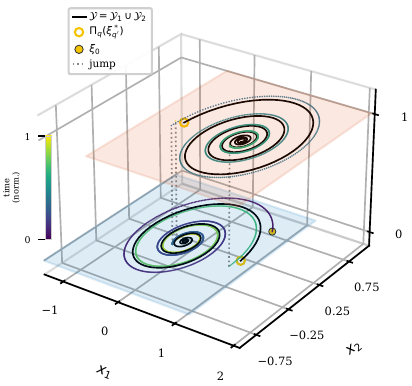}
    \caption{Trajectories of Example~\ref{ex:spiral} illustrating Theorem~\ref{thm: SGPAS}(a) in the $\xi$-coordinates. Solutions spiral on the parallel consistency planes $\mathcal{C}_1=\{x_3=0\}$ (blue) and $\mathcal{C}_2=\{x_3=1\}$ (red), with dotted segments indicating switching-induced jumps between them. The color gradient denotes normalized time, and trajectories accumulate on $\mathcal{Y}=\mathcal{Y}_1\cup\mathcal{Y}_2$ (black).}
    \label{fig:trajectories}
\end{figure}
\begin{figure}[h!]
    \centering
    \includegraphics[width=0.9\linewidth]{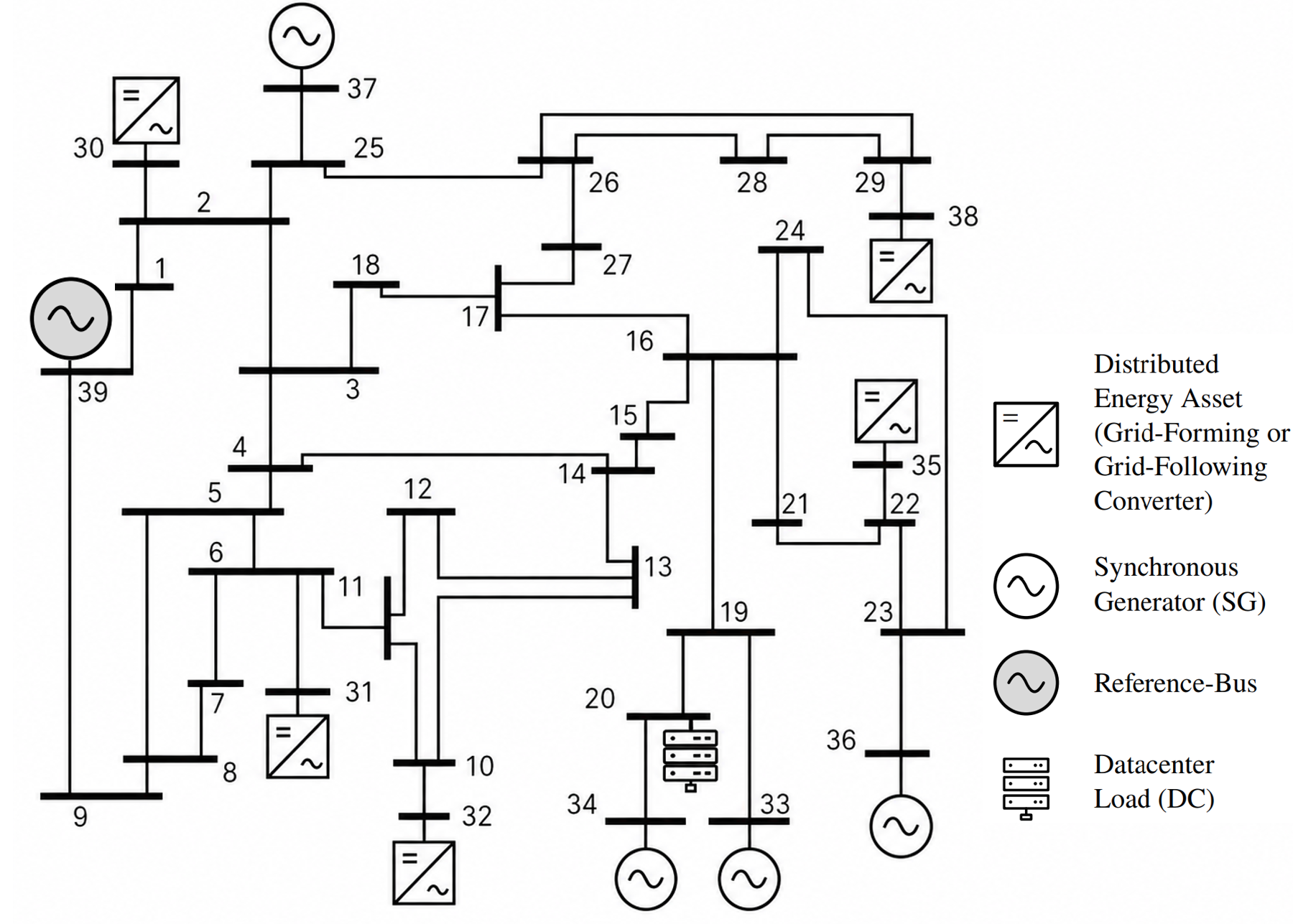}
    \caption{Modified IEEE 39-bus system used for training and inference simulations, with Bus~39 as the reference and the datacenter load at Bus~20.}
    \label{fig:ieee_modified}
    \vspace{-0.4cm}
\end{figure}

\subsection{Application to AI Power Transmission Systems with Training Workloads: Simulations and EMT Validations}
\begin{figure*}[t]
    \centering

    \begin{subfigure}[t]{0.49\textwidth}
        \centering
        \includegraphics[width=\textwidth]{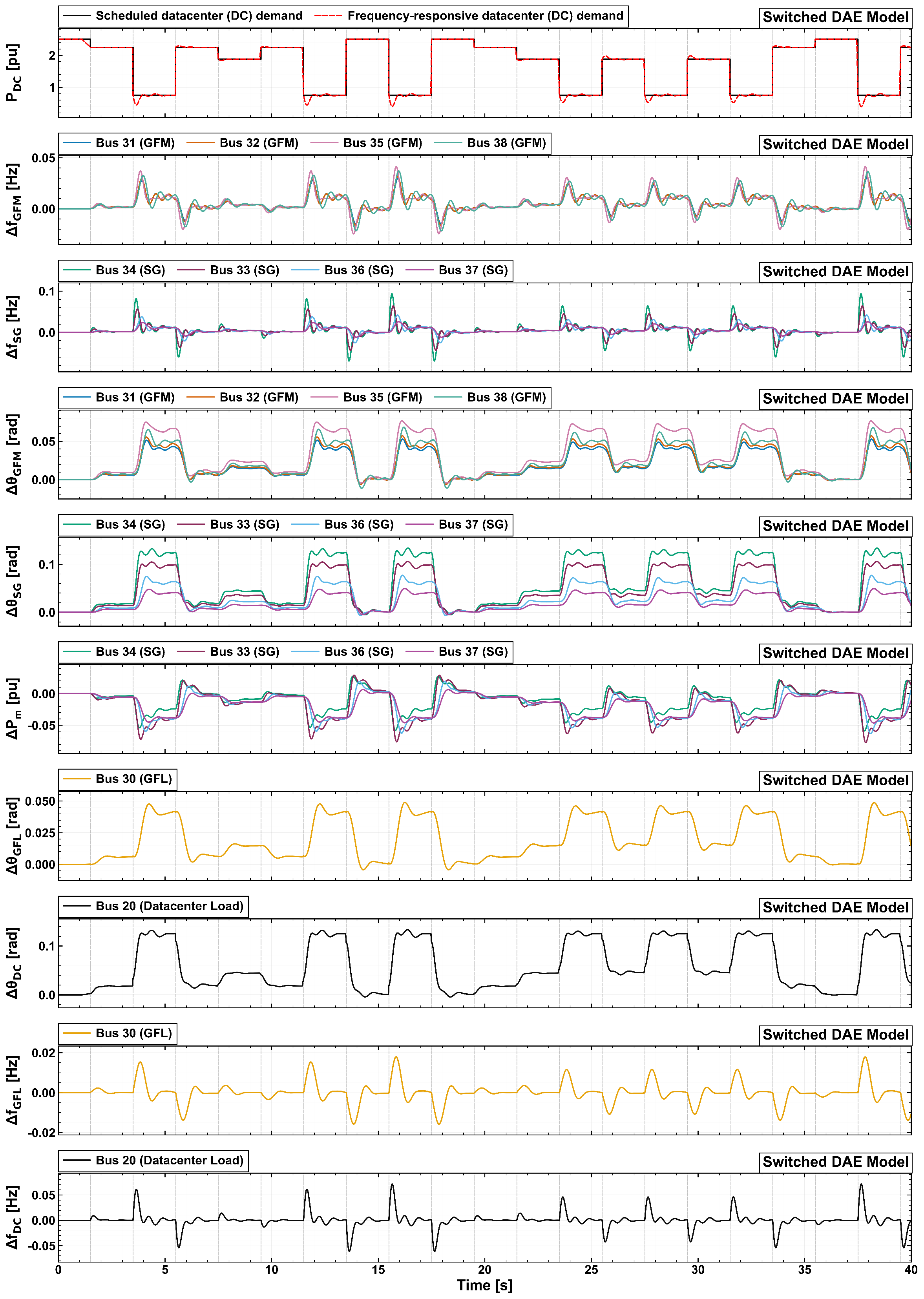}
        \caption{Deterministic Switched DAE model}
        \label{fig:deterministic_dae}
    \end{subfigure}
    \hfill
    \begin{subfigure}[t]{0.49\textwidth}
        \centering
        \includegraphics[width=\textwidth]{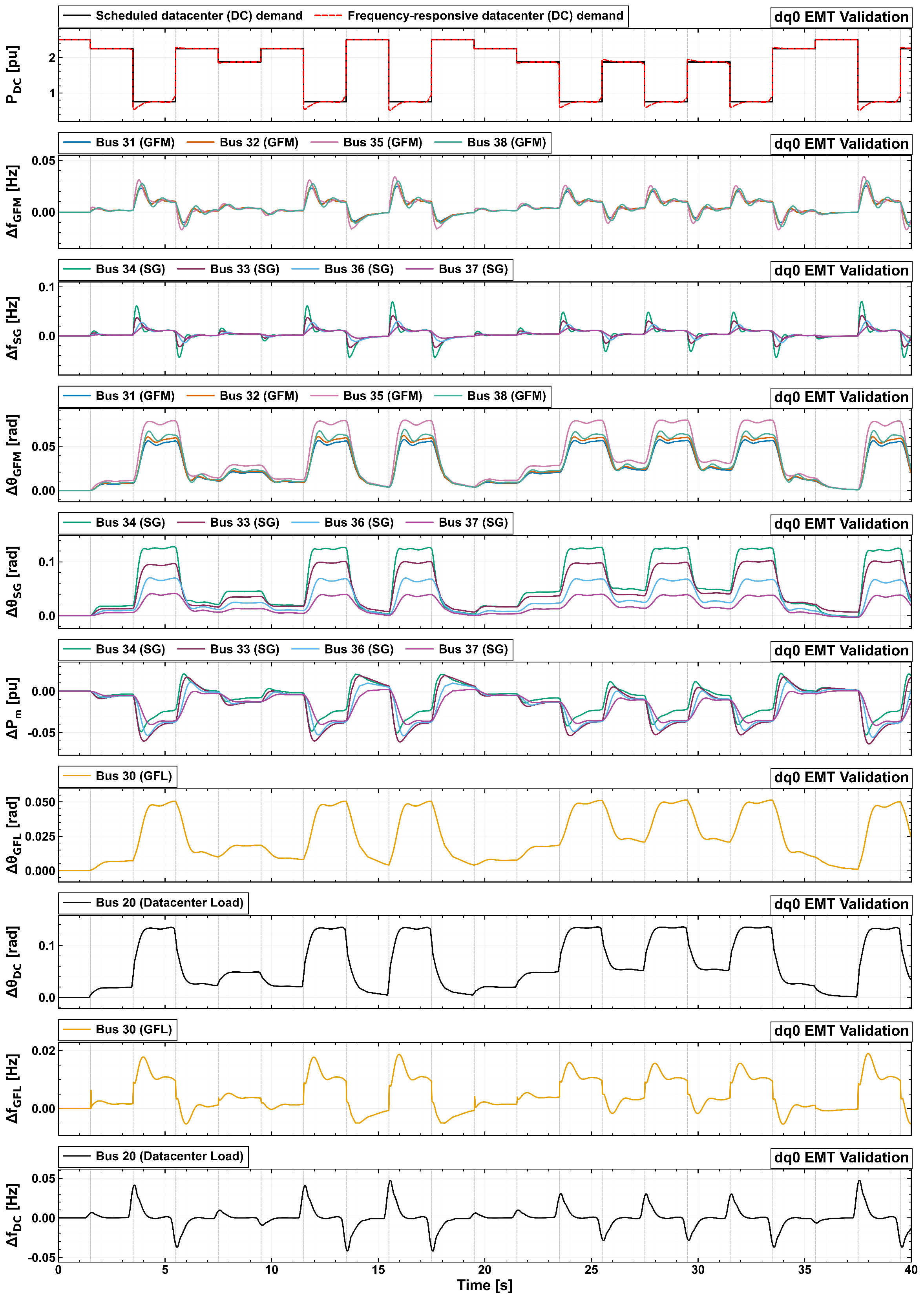}
        \caption{dq0 EMT validation}
        \label{fig:deterministic_emt}
    \end{subfigure}

    \vspace{-0.3em}
    \caption{Deterministic training-phase switching on the modified IEEE 39-bus system with AI datacenter loads: (a) switched-DAE model and (b) high-fidelity dq0 EMT validation in PSID.jl, both under switching over $\mathcal{Q}_{\text{tr}}=\{fw,bw,ar,id\}$.}
\label{fig:switched_dae_emt_validation_responses}
\vspace{-0.3cm}
\end{figure*}
To study the role of Theorems~\ref{thm:ISS_like} and~\ref{thm: SGPAS} in the power transmission system described in Section~III, we consider a modified New England IEEE 39-bus transmission network, shown in Figure~\ref{fig:ieee_modified}, incorporating heterogeneous generation resources and an AI datacenter load. Specifically, the network hosts five synchronous generators at Buses~33, 34, 36, 37, and 39, with Bus~39 serving as the reference bus; four GFM IBR units at Buses~31, 32, 35, and 38; and one GFL IBR unit at Bus~30. A frequency-responsive datacenter load is connected at Bus~20, while the GFM unit at Bus~38 is designated as the campus GFM and provides frequency support during variations in datacenter demand. Numerical studies are
presented for deterministic
training-phase switching over the set of modes $\mathcal{Q}_{\text{tr}}=\{fw,bw,ar,id\}$. In our numerical experiments, we consider two different cases:

\vspace{0.2cm}\noindent 
\emph{Case I: Stability under ADT Switching:} To study stability under general ADT switching, we set $N_0=2$ and let the datacenter at Bus~20 cycle through $p_{dc}^{fw}>p_{dc}^{bw}>p_{dc}^{ar}>p_{dc}^{id}$, with frequency support from the GFM inverter at Bus~38. We simulate the switched-DAE model of Section~\ref{sec:power} and validate it against high-fidelity dq0 electromagnetic transient (EMT) simulations in PowerSimulationsDynamics.jl (PSID.jl)~\cite{lara2023powersimulationsdynamics}. The EMT model captures network, converter, and electromechanical dynamics neglected by the reduced-order model, providing a benchmark for its accuracy under fast datacenter workload transitions.

\begin{figure*}[h]
    \centering
        \includegraphics[width=\textwidth]{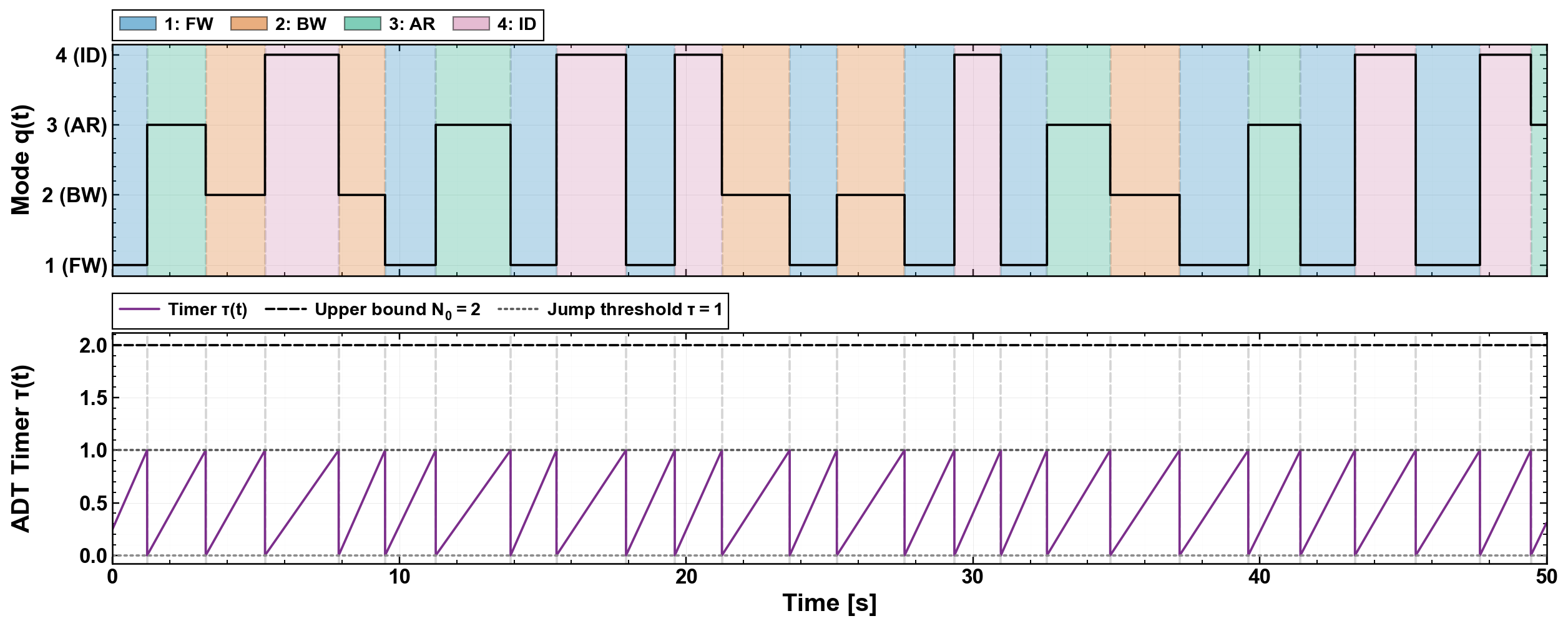}
\label{fig:deterministic_training_switching_timer}
    \vspace{-0.3em}
    \caption{Deterministic training-phase switching signal and ADT timer, for the scenario in Fig.~\ref{fig:switched_dae_emt_validation_responses}. The figure shows the switching signal over $\mathcal{Q}_{\text{tr}}=\{fw,bw,ar,id\}$ and the timer $\tau(t)$, which resets at each mode transition and imposes the jump threshold $\tau=1$ under the bound $N_0=2$.}
\label{fig:deterministic_switching_and_lyapunov}
\vspace{-0.5cm}
\end{figure*}

The PSID.jl model uses the same modified IEEE 39-bus network, operating point, generation mix, datacenter location, and mode transitions as the switched-DAE model, differing primarily in fidelity. It includes detailed synchronous-generator, converter, control, PLL, and transmission-line dynamics, while the datacenter at Bus~20 follows the same frequency-responsive load model~\eqref{eq:dcbal}.

\begin{figure*}[h]
    \centering 
    \begin{subfigure}[t]{0.49\textwidth}
    \includegraphics[width=0.85\linewidth]{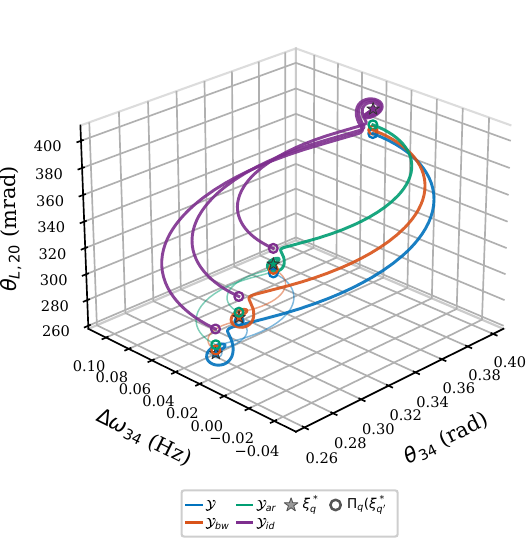}
    \caption{}
    \label{fig:ols_39}
    \end{subfigure}
    \hfill
    \begin{subfigure}[t]{0.49\textwidth}
    \centering  \includegraphics[width=0.9\linewidth]{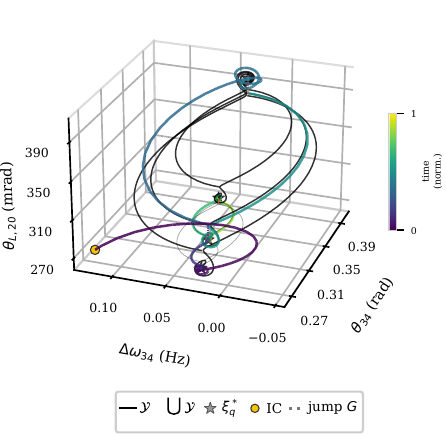}
    \caption{}
    \label{fig:thm2_39}
    \end{subfigure}
    \caption{(a) Projection of $\mathcal{Y}$ for the modified IEEE 39-bus system under training-phase switching, showing mode equilibria $\xi_q^*$ (stars), consistency projections $\Pi_q(\xi_{q'}^*)$ (circles), and limiting orbits (black). (b) Trajectories in $(\theta_{34},\Delta\omega_{34},\theta_{L,20})$ illustrating Theorem~\ref{thm: SGPAS}(a), including switching-induced jumps and convergence toward $\mathcal{Y}$. Color denotes normalized time.}
    \vspace{-0.4cm}
\end{figure*}

Figure~\ref{fig:switched_dae_emt_validation_responses} compares the switched-DAE model (Panel a) with the high-fidelity dq0 EMT simulation (Panel b), showing datacenter power demand and the relevant frequency, angle, mechanical-power, and bus responses. Figure~\ref{fig:deterministic_switching_and_lyapunov} additionally shows the active mode $q\in\mathcal{Q}_{\text{tr}}$ and the aperiodic timer $\tau$, which resets at each transition and evolves as $\dot{\tau}\in[0,1/\tau_d]$ up to the jump threshold $\tau=1$. The switched-DAE model captures the dominant switching-induced transients and inter-mode behavior observed in the EMT simulations. Differences arise mainly from the DC power-flow linearization and neglected fast converter dynamics, particularly at GFL buses where the switched-DAE uses a frequency-divider approximation instead of explicit inner-loop and PLL dynamics. Overall, the results support the switched-DAE framework as a tractable reduced-order model for AI datacenter mode transitions.

\emph{b) Limiting behavior:}
To illustrate Theorem~\ref{thm: SGPAS} on the 39-bus system, we set $N_0=1$, imposing a standard dwell-time bound. Since $\xi\in\mathbb{R}^n$ is high-dimensional, we project onto $(\theta_{34},\Delta\omega_{34},\theta_{L,20})$, where $\theta_{L,20}$ distinguishes the four mode-dependent consistency spaces. Figure~\ref{fig:ols_39} shows the predicted set $\mathcal{Y}=\bigcup_q\mathcal{Y}_q$ (black curves) and the mode-dependent equilibria $\xi_q^*$ (stars). Their separation along $\theta_{L,20}$ reflects the different datacenter power levels $p_{dc}^q$. Each $\mathcal{Y}_q$ is generated by the mode-$q$ flow from a projected equilibrium $\Pi_q(\xi_{q'}^*)$. The high-dimensional consistency spaces are omitted because their low-dimensional projections provide no useful geometric information. Figure~\ref{fig:thm2_39} shows trajectories approaching $\mathcal{Y}$, with color denoting normalized time and dotted segments representing switching-induced jumps. After each jump onto the new consistency space, the state evolves under the active mode toward $\mathcal{Y}$. Thus, regardless of the initial condition, the long-run behavior is well approximated by $\mathcal{Y}$, consistent with Theorem~\ref{thm: SGPAS}.

\section{Stability under Inference Workloads: Stochastic Switching}
\label{sec: stochastic}
%
As described in Section \ref{sec:power}, during inference, the datacenter mode transitions are triggered by user requests whose arrival times and service durations are inherently random. Therefore, in this section, we extend the results of Section \ref{sec:formulation} by treating mode transitions during inference workloads as a stochastic phenomenon, and we study the regularity and stability properties of the power transmission dynamics \eqref{eq:sadae_ps} by modeling the overall system as a SHDS of the form \eqref{eqn:shy_sys_def}. Our model of datacenter inference will contain two sources of randomness: mode transitions and switching times.
\subsection{Stochastic Mode Transitions}
The operating mode of the data centers during inference is modeled as a random variable
$v_1 \in \mathcal{Q}_{\mathrm{in}}$. Importantly, $v_1$ characterizes the mode
entered by the \emph{aggregate} datacenter load at each transition, rather than
the mode associated with an individual inference request. For a \emph{single}
user request, mode transitions are deterministic and follow the cycle
$\texttt{id}\to\texttt{pre}\to\texttt{dec}\to\texttt{id}$. Randomness in
$v_1$ emerges only at the \emph{aggregate} level: when multiple user requests
are processed concurrently, the next mode observed by the power system depends
on the timing and completion of the individual requests. Consequently, the superposition of many asynchronous request-level transitions induces an
effectively random sequence of aggregate operating modes.

To model the stochastic mode transitions, let
\[
p_{q'}
:=
\mathbb{P}[v_1=q'],
\qquad q'\in\mathcal{Q}_{\mathrm{in}},
\]
where the probabilities $\{p_{q'}\}_{q'\in\mathcal{Q}_{\mathrm{in}}}$
are obtained from empirical transition frequencies of the aggregate
datacenter load and satisfy $p_{q'}\geq0$, and $\sum_{q'\in\mathcal{Q}_{\mathrm{in}}}p_{q'}=1$. Accordingly, at each mode transition, the next aggregate operating
mode is approximated as an independent categorical random variable,
\[
v_1\sim
\operatorname{Cat}
\bigl(\mathcal{Q}_{\mathrm{in}},\{p_{q'}\}_{q'\in\mathcal{Q}_{\mathrm{in}}}\bigr).
\]
This constitutes an aggregate stochastic approximation of the mode
sequence generated by many asynchronous inference requests. The
mode-dependent dwell time is subsequently sampled from the
conditional distribution $\mu_{2\mid v_1=q'}$. To support this model, we provide two arguments:
\vspace{0.1cm}
\par \noindent
a) Production data centers process many requests concurrently, each cycling asynchronously through inference modes with random dwell times. The aggregate power is therefore a superposition of many independent cycles. By the law of large numbers, the aggregate mode fractions concentrate around their stationary values, motivating the approximation of successive modes as i.i.d.\ draws from
$\operatorname{Cat}(\mathcal Q_{\mathrm{in}},\{p_{q'}\})$. Similar population-level approximations are used for thermostatically controlled loads (TCLs), where aggregate behavior is characterized by stationary mode fractions~\cite[Sections II,III]{radaideh2019}\cite[Section II]{kasis2022}.

b) Inference cycles occur over tens to hundreds of milliseconds, much faster than the seconds-scale electromechanical dynamics. Under this timescale separation, the slow power-system dynamics respond primarily to the stationary distribution of the fast inference process~\cite[Corollary 4.7]{yinZhang2013}. Thus, for an ergodic inference cycle with stationary distribution $\{p_{q'}\}$, we approximate successive aggregate modes as i.i.d.\ samples, motivating
$v_1\sim\operatorname{Cat}(\mathcal Q_{\mathrm{in}},\{p_{q'}\})$.

\vspace{0.1cm}
\par\noindent
\begin{rem}
\label{rem: breakdown}\normalfont{
This approximation requires sufficiently many concurrent requests and inference cycles per electromechanical time scale. At low arrival rates, a few requests may dominate the aggregate mode and mixing toward the stationary distribution may be slow~\cite{yang2024queueing}. Conversely, synchronized or bursty arrivals~\cite{burstgpt2024} may produce prolonged \texttt{pre} or \texttt{dec} phases, violating the independence or dwell-time assumptions. Such regimes, where the aggregate mode does not mix sufficiently fast relative to the power-system dynamics, are outside the scope of this analysis.}\qed
\end{rem}

\subsection{Stochastic Switching Times}
The stochastic switching time is parameterized by a random
timer-reset variable \(v_2\). The variable \(v_2\) is dimensionless
and determines the nominal minimum dwell time associated with the
next mode. More precisely, let \(T_{q'}^{\mathrm{nom}}\) denote the
nominal minimum dwell time associated with mode
\(q'\in\mathcal{Q}_{\mathrm{in}}\). We define
\begin{equation}\label{eq:normalized_dwell_time}
    v_2 := \frac{T_{q'}^{\mathrm{nom}}}{\tau_d},
    \qquad
    T_{q'}^{\mathrm{nom}}=\tau_d v_2,
\end{equation}
and assume that \(v_2\in[0,N_0]\). Accordingly, the
mode-dependent probability distributions introduced below describe
the normalized nominal dwell times
\(T_{q'}^{\mathrm{nom}}/\tau_d\), rather than the realized physical
dwell times. Under the timer inclusion introduced below, the realized
physical dwell time may be larger than \(T_{q'}^{\mathrm{nom}}\).

To distinguish physical-time parameters from the parameters of the
normalized timer-reset variable, define
\begin{equation}\label{eq:normalized_distribution_parameters}
    \bar{\lambda}_r := \tau_d\lambda_r,
    \qquad
    \bar{\beta}_{\texttt{pre}}
    := \tau_d\beta_{\texttt{pre}},
\end{equation}
where \(\lambda_r\) is the physical-time arrival rate and
\(\beta_{\texttt{pre}}\) is the physical-time rate parameter of a
Gamma distribution. Here, the Gamma distribution is parameterized
in terms of its shape and rate.

\vspace{0.1cm}
\emph{Prefill Mode}: It is well known that the physical
\emph{Prefill} duration \(T_{\texttt{pre}}\) scales approximately
linearly with the number of input tokens~\cite{wang2025systematic}. In
particular, $T_{\texttt{pre}}=
c_{\texttt{pre}}L_{\texttt{in}}$, where \(c_{\texttt{pre}}>0\) denotes the compute time per input token
and \(L_{\texttt{in}}\) is the prompt length. Since prompt lengths
depend on user requests, \(L_{\texttt{in}}\) is itself a random
variable, with empirical distributions exhibiting Zipf-like
behavior~\cite[Fig.~7]{burstgpt2024}. To capture variability in input prompt lengths, we model the
normalized nominal Prefill duration
$T_{\texttt{pre}}^{\mathrm{nom}}/\tau_d$ using a truncated
$\mathrm{Gamma}(\alpha_{\texttt{pre}},
\bar{\beta}_{\texttt{pre}})$ distribution supported on
$[0,N_0]$.

\vspace{0.1cm}
\emph{Idle Mode}: We assume that user requests arrive according to
a Poisson process with physical-time rate \(\lambda_r\).
Consequently, the physical inter-arrival time is exponentially
distributed with rate \(\lambda_r\). Since the normalized
timer-reset variable is obtained by dividing the nominal
inter-arrival time by \(\tau_d\), we model it using an
\(\mathrm{Exp}(\bar{\lambda}_r)\) distribution truncated to
\([0,N_0]\), where
\(\bar{\lambda}_r=\tau_d\lambda_r\).

\emph{Decode Mode}: Finally, the physical \emph{Decode} duration
satisfies $T_{\texttt{dec}}=
c_{\texttt{dec}}L_{\texttt{out}}$, where \(L_{\texttt{out}}\) is the number of generated output tokens.
Output lengths are heavy-tailed~\cite{wang2026robust,zheng2026scheduling}
and are empirically well fitted by a log-\(t\)
distribution~\cite[Fig.~1]{zheng2026scheduling}. Since LLM inference services can impose an upper bound
$N_{\texttt{cap}}$ on the number of generated output
tokens~\cite{yang2024queueing}, the corresponding maximum
nominal Decode duration is $T_{\max}=c_{\texttt{dec}}N_{\texttt{cap}}$.
Therefore, the normalized timer-reset variable is supported
on $[0,N_0]$., where
\begin{equation}\label{eq:normalized_timer_bound}
    N_0
    :=
    \frac{T_{\max}}{\tau_d}
    =
    \frac{c_{\texttt{dec}}N_{\texttt{cap}}}{\tau_d}.
\end{equation}

\vspace{0.2cm}
    Throughout all three modes, we adopt the same normalized upper bound
\(N_0\), since the Decode mode typically dominates the per-request
duration~\cite{distserve2024}. The conditional distributions of the
normalized timer-reset variable are therefore modeled as
\begin{equation}\label{eq: dwell_distributions}
\resizebox{0.92\columnwidth}{!}{$\displaystyle
v_2\mid v_1=q'
\sim
\begin{cases}
\operatorname{TrExp}(\bar{\lambda}_r;[0,N_0]),
    & q'=\texttt{id},\\[3pt]
\operatorname{TrGamma}
(\alpha_{\texttt{pre}},\bar{\beta}_{\texttt{pre}};[0,N_0]),
    & q'=\texttt{pre},\\[3pt]
\operatorname{HT}_{\mathrm{norm}}([0,N_0]),
    & q'=\texttt{dec}.
\end{cases}$}
\end{equation}
Here, $\operatorname{TrExp}$ and $\operatorname{TrGamma}$ denote,
respectively, the exponential and Gamma distributions conditioned
on the interval $[0,N_0]$ (their normalized densities are given in
Appendix~\ref{appen:integrals}). Moreover,
$\operatorname{HT}_{\mathrm{norm}}([0,N_0])$ denotes a general
heavy-tailed distribution for the normalized nominal Decode duration
on $[0,N_0]$, obtained by scaling the corresponding physical-time
distribution by $1/\tau_d$. Possible choices include a truncated
log-$t$ distribution~\cite[Fig.~1]{zheng2026scheduling} or a truncated
power law~\cite{wang2026robust}.   
\subsection{Global Recurrence Under Stochastic Switching}
\label{subsec: shds}
We are now ready to cast the overall power transmission system with co-located datacenter loads subject to an inference workload as a stochastic hybrid system of the form \eqref{eqn:shy_sys_def}. To that end, let $\{(v_{1,k},v_{2,k})\}_{k=1}^{\infty}$ be an
i.i.d.\ sequence of random vectors drawn from the joint probability
measure
\begin{equation}
    \mu(dv_1,dv_2)
    =
    \mu_1(dv_1)\mu_{2\mid v_1}(dv_2),
\end{equation}
where $\mu_1 =
\operatorname{Cat}(\mathcal{Q}_{\mathrm{in}},\{p_{q'}\})$ and
$\mu_{2\mid v_1=q'}$ is given by~\eqref{eq: dwell_distributions}.
We emphasize that $\mu$ is generally not a product measure, since the
conditional distribution $\mu_{2\mid v_1}$ depends on $v_1$. Thus,
although the random vectors
$\{(v_{1,k},v_{2,k})\}_{k=1}^{\infty}$ are independent across $k$,
their components $v_{1,k}$ and $v_{2,k}$ are generally statistically
dependent.

\vspace{0.2cm}
\par \noindent
Using the change of variables $\tilde\xi = \xi - \xi_q^*$ introduced in
Section~\ref{sec:formulation}, and writing $y = (\tilde\xi, q, \tau)$, the overall dynamics of the system are given by the following SHDS:
\begin{subequations}\label{eq: shds}
\begin{align}
\dot{\tilde\xi}=&\Pi_q^{\mathrm{diff}} A_q\tilde\xi,~~~\dot{q}=0,~~\dot{\tau}\in\left[-\frac{1}{\tau_d}, 0\right]~~~
    y \in C,\\[2mm]
    \begin{bmatrix}\tilde\xi^+\\ q^+\\ \tau^+\end{bmatrix}
    &=
    \begin{bmatrix}\textcolor{black}{L_{v_1}\tilde\xi + \Delta_{q,v_1}}\\
    v_1\\ v_2\end{bmatrix},
    \quad y \in D, \\[0.2cm]
    C &= \bigcup_{q\in\mathcal{Q}_{\text{in}}}\text{span}(V_q)\times\{q\}\times[0,N_0], \\[0.2cm] 
    D &= \bigcup_{q\in\mathcal{Q}_{\text{in}}}\text{span}(V_q)\times\{q\} \times\{0\},   
\end{align}
\end{subequations}
where $\Delta_{q,v_1} \coloneqq \Pi_{v_1}(\xi_q^*) - \xi_{v_1}^*$. Note that this system has the form of \eqref{eqn:shy_sys_def} with $v_k = (v_{1,k},v_{2,k}) \sim \mu$ and $\mu(dv_1,dv_2)
=
\mu_1(dv_1)\mu_{2\mid v_1}(dv_2)$. Under the timer dynamics $\dot{\tau}\in\left[-\frac{1}{\tau_d},0\right]$ and $\tau^+=v_2$, the quantity \(\tau_dv_2=T_{q'}^{\mathrm{nom}}\) represents the
minimum time required for the timer to reach the jump condition
\(\tau=0\). This minimum is attained when
\(\dot\tau=-1/\tau_d\). Values of
\(\dot\tau\in(-1/\tau_d,0]\) allow longer inter-transition times and
therefore capture uncertainty or additional delays in the realized
physical dwell time. Consequently, we have that $T_{\mathrm{dwell}}\geq \tau_d v_2$.

Before stating the main stability results for \eqref{eq: shds}, we introduce two assumptions extending those in~\cite{povedaAbdelgalil2025} beyond the common-equilibrium setting. 
\begin{assump}
\label{assump: flow_LMI}\normalfont{
There exist $\mu>0$, $\theta>0$ and $\kappa^\star\in(0,1)$ such that,
for each $q\in\mathcal{Q}_{\mathrm{in}}$, there exists a pair of
symmetric matrices $(P_q,\tilde{N}_q)$ that are positive definite on
$\operatorname{span}(V_q)$ and satisfy the following inequalities on
$\operatorname{span}(V_q)$:
\begin{align}
A_q^\top \Pi_q^{\mathrm{diff},\top}P_q
+P_q\Pi_q^{\mathrm{diff}}A_q
+\frac{\mu}{\tau_d \cdot N_0} P_q
&\preceq -\tilde{N}_q,
\\
(1+\theta)
\sum_{q'\in\mathcal{Q}_{\mathrm{in}}}
p_{q'}\widetilde{I}_{q'}(\mu)
L_{q'}^\top P_{q'}L_{q'}
&\preceq
\kappa^\star P_q.
\end{align}}
\end{assump}

\vspace{0.1cm}
\par \noindent
Analogous to the quantity $\alpha$  in Theorem \ref{thm:ISS_like}, we define the probability-weighted residual:
\begin{equation}
\label{eq: alpha_gamma}
    \alpha_\Gamma(\mu) \;:=\; \max_{q\in\mathcal{Q}_{\text{in}}}\sum_{q'\in\mathcal{Q}_{\text{in}}}
    p_{q'}\,\tilde{I}_{q'}(\mu)\,
    \textcolor{black}{\Delta_{q,q'}^\top P_{q'} \Delta_{q,q'}}.
\end{equation}
Compared to the worst-case $\alpha$ in Theorem~\ref{thm:ISS_like},
$\alpha_\Gamma(\mu)$ is probability-weighted. That is, modes visited rarely (small $p_{q'}$) and modes associated
with larger timer-reset values (small $\widetilde{I}_{q'}(\mu)$)
contribute less to $\alpha_\Gamma$.  Based on this, the following recurrence theorem is the main result of this section, and it leverages the construction of the set
\begin{equation}
\mathcal{A}_{\text{in}}=\{0\}\times\mathcal{Q}_{in}\times[0,N_0].
\end{equation}
The proof is presented in the Appendix.
\begin{thm}\label{thm: shds_recurrence}
Suppose that Assumptions~\ref{assumpregdae},
\ref{assump: regularity}, and
\ref{assump: flow_LMI} hold with respect to
$\mathcal{Q}_{\mathrm{in}}$. Then, for every $\varepsilon>0$,
the open (relative to $C\cup D$) bounded set
\begin{equation}\label{eq: recurrent_set}
    \mathcal{O}_\varepsilon \;:=\; \Bigl\{x \in C\cup D :
    |x|_{\mathcal{A}_{\text{in}}} < r\!\left(\alpha_\Gamma(\mu)\right) + \varepsilon\Bigr\},
\end{equation}
is uniformly globally recurrent (UGR) for the
SHDS~\eqref{eq: shds}, with $r(\alpha_\Gamma(\mu))=\sqrt{(c_1/c_2)\alpha_\Gamma(\mu)}$, $c_1=(1+1/\theta)$, and $c_2=e^{-\mu}\underline{p}(1-\kappa^{\star})$. \hfill $\square$ 
%
%
\end{thm}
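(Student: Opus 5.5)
The plan is a Foster--Lyapunov argument on the SHDS~\eqref{eq: shds}, using the sufficient condition for uniform global recurrence of an open set in \cite[Sec.~2.4]{teel2013lyapunov}: a Lyapunov-like function that does not increase along flows and whose conditional expected increment at jumps is negative, uniformly outside $\mathcal O_\varepsilon$. First I would check that the data of \eqref{eq: shds} satisfy the basic regularity conditions required there. The sets $C$ and $D$ are closed, being finite unions of products of subspaces, singletons and compact intervals. The flow map is single-valued in $\tilde\xi$, with a compact convex interval in $\tau$, so it is OSC, LB and convex-valued. The jump map $(y,v)\mapsto (L_{v_1}\tilde\xi+\Delta_{q,v_1},v_1,v_2)$ is continuous in $y$ for each $v$ and measurable in $v$. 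Since $q$ and $\tau$ always lie in $\mathcal Q_{\mathrm{in}}\times[0,N_0]$, we have $|y|_{\mathcal A_{\mathrm{in}}}=|\tilde\xi|$. It therefore suffices to show that $|\tilde\xi|$ drops below $r(\alpha_\Gamma(\mu))+\varepsilon$ within a time, uniform in the initial condition, with probability at least $1-\varrho$.

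I would use the timer-weighted quadratic function
\[
V(y):=e^{-\mu\tau/N_0}\,\tilde\xi^\top P_q\tilde\xi ,
\]
which satisfies $e^{-\mu}\underline p|\tilde\xi|^2\le V(y)\le \overline p|\tilde\xi|^2$ on $C\cup D$, because $\tilde\xi\in\operatorname{span}(V_q)$ and $\tau\in[0,N_0]$. Along flows, $\dot\tau\in[-1/\tau_d,0]$ gives $-\tfrac{\mu}{N_0}\dot\tau\le \tfrac{\mu}{\tau_dN_0}$. The first inequality of Assumption~\ref{assump: flow_LMI} then absorbs exactly this term, yielding
\[
\dot V\le -e^{-\mu\tau/N_0}\tilde\xi^\top\tilde N_q\tilde\xi\le -e^{-\mu}\lambda_{\min}(O_q^\top\tilde N_qO_q)|\tilde\xi|^2 ,
\]
which is bounded away from zero outside $\mathcal O_\varepsilon$.

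At jumps we have $\tau=0$, so $V(y)=\tilde\xi^\top P_q\tilde\xi$. Conditioning on $v_1=q'$ and using Young's inequality $(a+b)^\top P(a+b)\le(1+\theta)a^\top Pa+(1+1/\theta)b^\top Pb$ gives
\[
\mathbb E[V(y^+)]\le (1+\theta)\sum_{q'}p_{q'}\tilde I_{q'}(\mu)\,\tilde\xi^\top L_{q'}^\top P_{q'}L_{q'}\tilde\xi+c_1\alpha_\Gamma(\mu),
\]
where I interpret $\tilde I_{q'}(\mu)=\mathbb E[e^{-\mu v_2/N_0}\mid v_1=q']$ (the integrals of Appendix~\ref{appen:integrals}). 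The second inequality of Assumption~\ref{assump: flow_LMI} then gives
\[
\mathbb E[V(y^+)]-V(y)\le-(1-\kappa^\star)V(y)+c_1\alpha_\Gamma\le -c_2|\tilde\xi|^2+c_1\alpha_\Gamma .
\]
Outside $\mathcal O_\varepsilon$ we have $|\tilde\xi|\ge r+\varepsilon$ with $r^2=c_1\alpha_\Gamma/c_2$. Hence the right-hand side is at most $-c_2\bigl((r+\varepsilon)^2-r^2\bigr)<0$, a uniform negative constant.

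Combining the two estimates gives the hypotheses of the Foster--Lyapunov recurrence theorem: $V$ is continuous, it is bounded on $\{|y|_{\mathcal A_{\mathrm{in}}}\le R\}$, and it decreases by a uniform margin per unit of hybrid time $t+j$ outside $\mathcal O_\varepsilon$, both during flows and in expectation at jumps. Starting from $V\le\overline pR^2$ and using a supermartingale/Markov bound, the probability of remaining outside $\mathcal O_\varepsilon$ up to hybrid time $s$ is at most $\overline pR^2/(\rho s)$, where $\rho>0$ is the uniform margin. Choosing $s$ large enough gives the UGR property for any prescribed $\varrho$. Solutions whose domain does not reach $t+j\ge s$ are allowed by the definition of UGR, and solutions that flow forever because $\dot\tau=0$ are handled by the flow decrease.

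I expect the main obstacle to be this last step. It requires verifying the precise hypotheses of the recurrence theorem in \cite{teel2013lyapunov}, which are stated for $\mathcal{PD}(\mathcal O)$-type decrease rates and require $\mathcal O_\varepsilon$ to be open relative to $C\cup D$. It also requires handling correctly the fact that $V$ is only positive definite on the mode-dependent subspaces $\operatorname{span}(V_q)$, so all eigenvalue bounds must be taken through the orthonormal bases $O_q$ as in Lemmas~\ref{lem:PD}--\ref{lem: eig_vals}. The mixed flow/jump decrease also has to be merged into a single margin per unit of $t+j$.
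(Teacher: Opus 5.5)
Your proposal is correct and follows essentially the same route as the paper. You use the same timer-weighted function $V=e^{-\mu\tau/N_0}\tilde\xi^\top P_q\tilde\xi$, the flow decrease from the first inequality of Assumption~\ref{assump: flow_LMI}, and the expected jump bound $\mathbb{E}[V(g)\mid x]\le\kappa^\star V(x)+(1+1/\theta)\alpha_\Gamma(\mu)$ (tower property, Young's inequality, second inequality), and you then invoke the Lagrange--Foster recurrence theorem of \cite{teel2013lyapunov}. The only cosmetic difference is that you work with constant decrease margins outside $\mathcal{O}_\varepsilon$ and sketch the underlying supermartingale argument, whereas the paper packages the decrease into $\rho(x)=\min\{\lambda V(x),[(1-\kappa^\star)V(x)-c]_+\}\in\mathcal{PD}(\mathcal{O}_\varepsilon)$ and cites \cite[Thm.~4.4]{teel2013lyapunov} directly.
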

\begin{rem}\normalfont{
Theorem~\ref{thm: shds_recurrence} establishes UGR to a neighborhood of $\mathcal{A}_{\mathrm{in}}$ with radius $r(\alpha_\Gamma(\mu))$, determined by the probability-weighted residual~\eqref{eq: alpha_gamma}. Thus, from any bounded set of initial conditions, the state reaches this neighborhood with probability approaching one. If all equilibria coincide, $\alpha_\Gamma(\mu)=0$ and the Lyapunov--Foster conditions yield UGASp and recovering stochastic asymptotic stability of the set $\mathcal{A}_{\mathrm{in}}$.} \hfill \qed
\end{rem}
The following stylized example illustrates the result of Theorem \ref{thm: shds_recurrence}.
\begin{ex}
\label{ex:stochastic}
To illustrate Theorem~\ref{thm: shds_recurrence}, we retain the
same switched DAE as in Example~\ref{ex: example_7_nanez}. The
system matrices $E_q$, $A_q$, $b_q$, and the Lyapunov matrices
$P_q$, $O_q$ are unchanged, but we replace the deterministic
switching mechanism with stochastic i.i.d.\ normalized timer-reset
values drawn from a uniform distribution. 
Specifically, for each mode transition we use
\begin{equation*}
    v_2 \mid v_1 = q' \;\sim\; \mathrm{Uniform}(0.1,\, N_0),
    \quad q' \in \{1,2\},
\end{equation*}
with $N_0 = 2$, $\mu = 6.5$, $\theta = 0.35$,
$\tau_d = 4$, and equal mode probabilities $p_1 = p_2 = 0.5$.
\vspace{0.1cm}
\par \noindent
The flow condition
requires $\mu\cdot \sfrac{1}{\tau_d}/N_0 < \underline{m}/\overline{p}$, i.e.\
$6.5\times 0.25/2 = 0.8125 < 1$, giving a net flow decrease rate
$\rho = 1 - 0.8125 = 0.1875 > 0$. Assumption~\ref{assump: flow_LMI} is satisfied with
\begin{equation*}
    \kappa = (1+\theta)\tilde{I}_q(\mu)
    = (1.35)(0.117) \approx 0.1579 < 1,
\end{equation*}
thereby verifying both inequalities in
Assumption~\ref{assump: flow_LMI}.
The probability-weighted residual~\eqref{eq: alpha_gamma} is $\alpha_\Gamma(\mu)
    = \tfrac{1}{2}\,\tilde{I}_q(\mu)\cdot 9
    \approx 0.5265$, where the factor $9=
\left\|
\Pi_{q'}(\xi_q^\star)-\xi_{q'}^\star
\right\|_{P_{q'}}^2$ is the same projected-equilibrium residual as in
Example~\ref{ex: example_7_nanez}, multiplied here by the
weight $\widetilde I_q(\mu)\approx0.117$, instead of the
worst-case factor $1$ appearing in $\alpha$. 
Figure~\ref{fig:example_stochastic} illustrates these results.
Panel~A shows the distance $|x|_{\mathcal{A}_{\text{in}}}$ remaining bounded and
repeatedly returning to a recurrent neighborhood, consistent with uniform global recurrence.
Panel~B shows the state trajectories $x_1, x_2$ oscillating under
the stochastic mode sequence.
Panel~C shows the i.i.d.\ normalized timer-reset values drawn from
$\mathrm{Uniform}[0.1,2]$. Since $\tau_d=4$, these values correspond
to nominal minimum physical dwell times in $[0.4,8]$.
\hfill \qed
\end{ex}
\begin{figure}[t]
    \centering
    \includegraphics[width=0.99\linewidth]{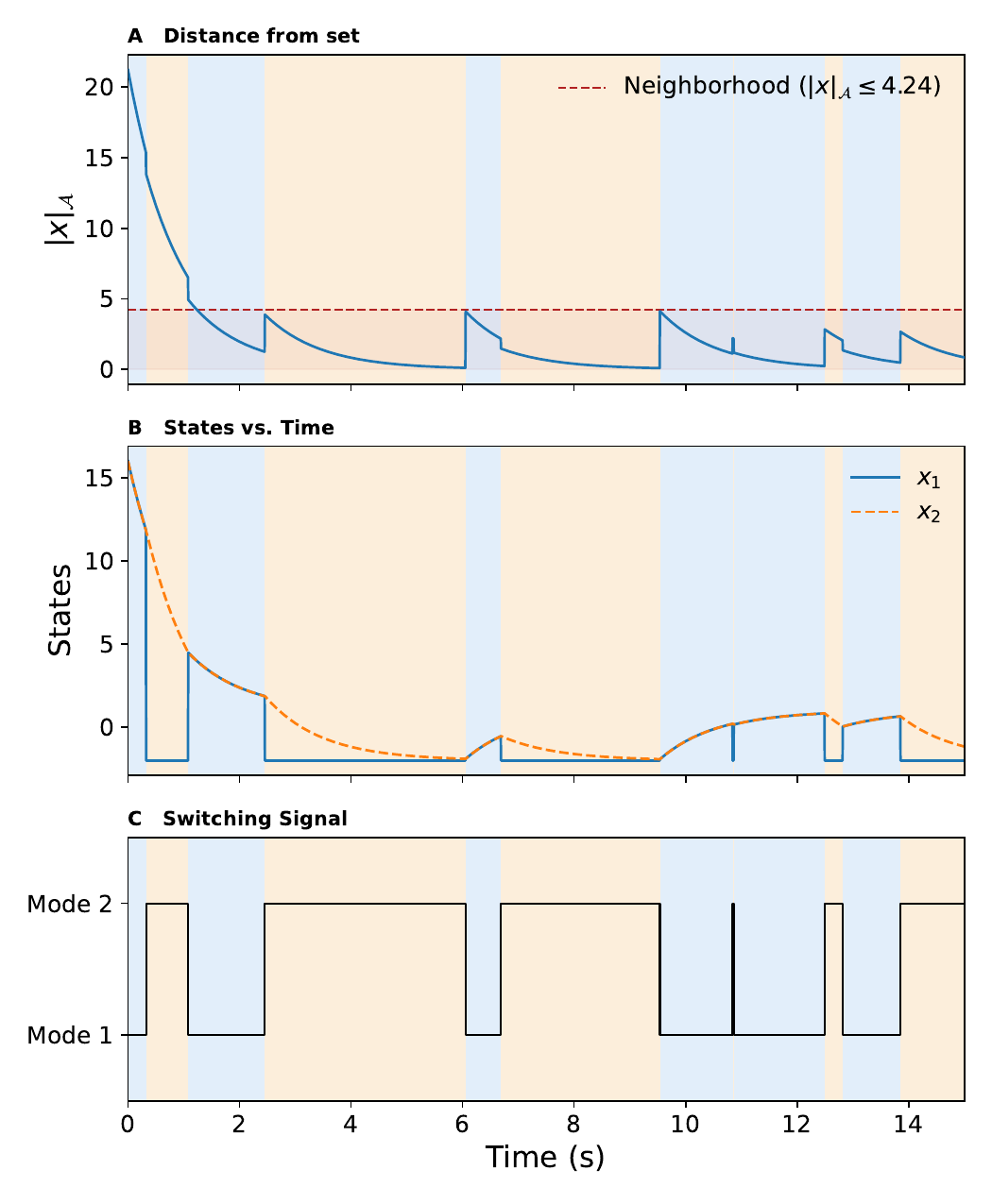}
    \caption{Stochastic two-mode example illustrating Theorem~\ref{thm: shds_recurrence}: (A) distance $|x|{\mathcal A{\mathrm{in}}}$, (B) state trajectories, and (C) switching signal with uniformly distributed timer resets.}
    \label{fig:example_stochastic}
\end{figure}
%
%

\begin{figure*}[t]
    \centering

    \begin{subfigure}[t]{0.49\textwidth}
        \centering
        \includegraphics[width=\textwidth]{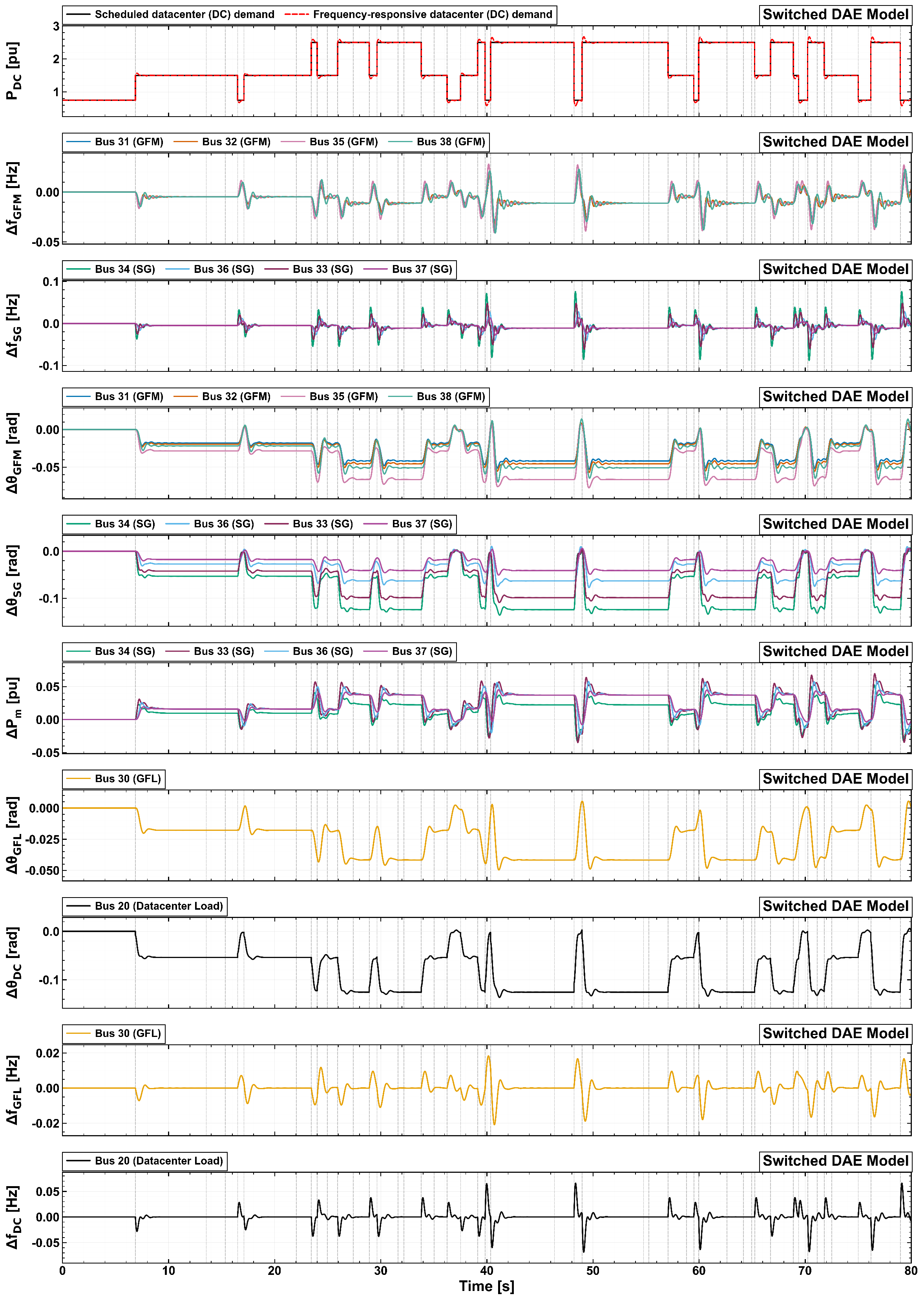}
        \caption{Stochastic switched DAE model}
        \label{fig:stochastic_dae}
    \end{subfigure}
    \hfill
    \begin{subfigure}[t]{0.49\textwidth}
        \centering
        \includegraphics[width=\textwidth]{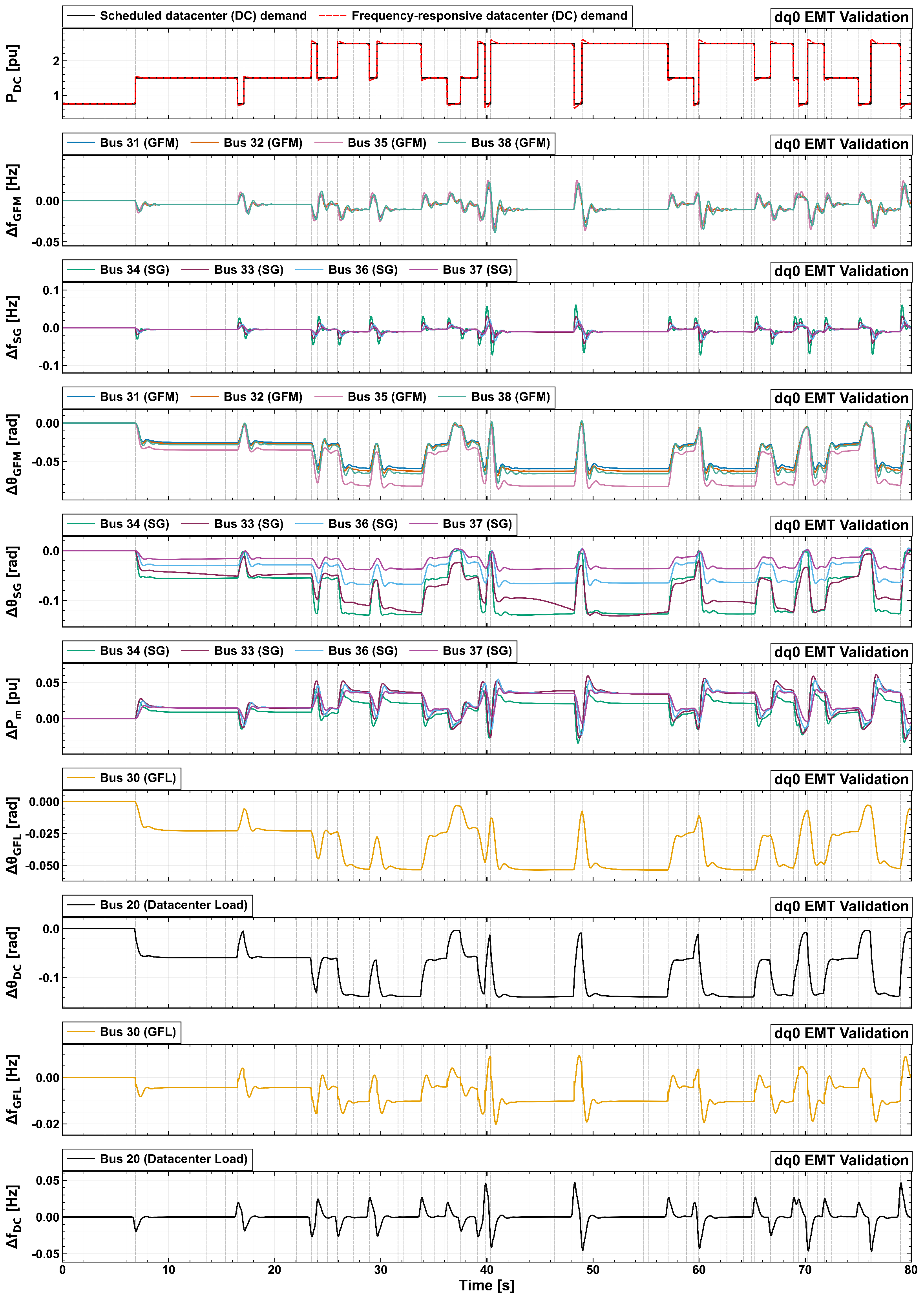}
        \caption{dq0 EMT validation}
        \label{fig:stochastic_emt}
    \end{subfigure}

    \vspace{-0.3em}
    \caption{Stochastic inference-phase switching on the modified IEEE 39-bus system with AI datacenter loads: (a) switched-DAE model and (b) high-fidelity dq0 EMT validation in PSID.jl, both under switching over $\mathcal{Q}_{\text{in}}=\{id,pre,dec\}$.}
    \label{fig:stochastic_switched_dae_emt_validation_responses}
    \vspace{-0.4cm}
\end{figure*}
\subsection{Application to AI Power Transmission Systems with Inference Workloads: Experiments and EMT Validations}
\label{subsec:stochastic_inference}

The stochastic scenario drives the 39-bus system with an inference-phase
switching signal over $\mathcal{Q}_{\text{in}} = \{id, pre, dec\}$, with mode
transitions drawn i.i.d.\ from $\mathrm{Cat}(\mathcal{Q}_{\text{in}}, \{p_{q'}\})$
and dwell times drawn from the per-mode distributions
of~\eqref{eq: dwell_distributions}: truncated exponential idle, truncated
Gamma prefill, and truncated heavy-tailed decode. The datacenter at Bus~20
cycles through power levels $p_{dc}^{pre} > p_{dc}^{dec} > p_{dc}^{id}$,
producing irregular switching intervals whose heavy-tailed decode dwells
are visible as the longer orange phases in
Figure~\ref{fig:stochastic_switching_and_lyapunov}.

\par Figure~\ref{fig:stochastic_switched_dae_emt_validation_responses}
compares the stochastic switched-DAE model (Panel a) against the dq0 EMT
simulation (Panel b). The stochastic mode sequence produces aperiodic
switching, yet both models show bounded frequency and angle deviations
across all generator, inverter, and datacenter buses, qualitatively
consistent with the uniform global recurrence guaranteed by
Theorem~\ref{thm: shds_recurrence}.

\par
Taken together, Figures \ref{fig:stochastic_switched_dae_emt_validation_responses} and \ref{fig:stochastic_switching_and_lyapunov} indicate that the switched-DAE model remains a
faithful reduced-order representation of the transmission system even when mode
transitions are driven by the random, request-level dynamics of AI inference
workloads rather than a fixed computational schedule. The aperiodic, heavy-tailed
dwell times of the stochastic mode sequence produce irregular excursions in
frequency and angle across the generator, inverter, and datacenter buses, yet both
the analytical model and the dq0 EMT simulation confine these excursions to a
comparable bounded range and return them to a neighborhood of the mode equilibria
after each switch, in agreement with the recurrence behavior certified by
Theorem \ref{thm: shds_recurrence}. This agreement persists despite the EMT model's additional fidelity, in
particular its explicit representation of converter switching dynamics, PLL
synchronization, and transmission-line electromagnetic transients that are
abstracted away in the reduced-order representation, suggesting that the timescale
separation underlying the stochastic switched-DAE model is not merely a modeling
convenience but a reasonable approximation of the aggregate power-system response
to inference-driven datacenter load switching.

\begin{figure*}[t]
    \centering
        \includegraphics[width=\textwidth]{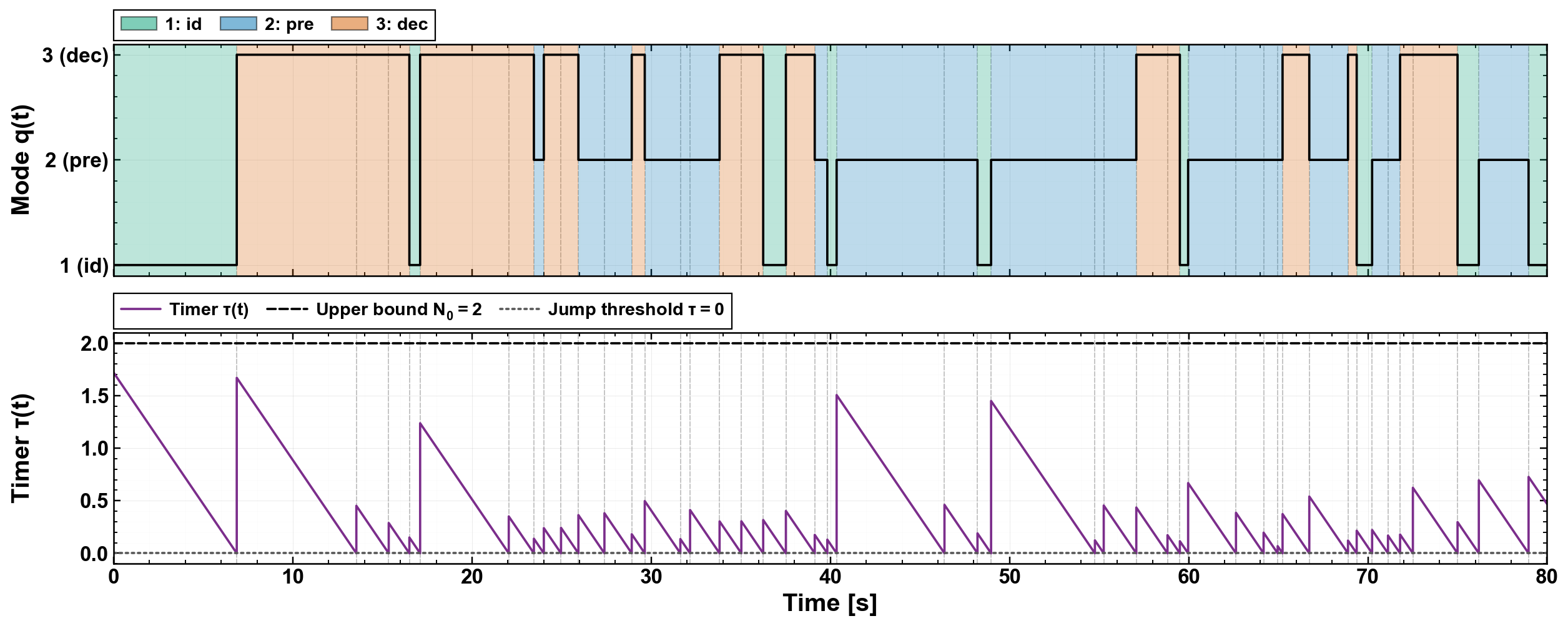}
   \label{fig:stochastic_lyapunov_certificate}
    \vspace{-0.3em}
    \caption{Stochastic switching signal and the timer for the scenario in Fig.~\ref{fig:stochastic_switched_dae_emt_validation_responses}. The figure shows the stochastic mode sequence and countdown timer $\tau(t)$. }
\label{fig:stochastic_switching_and_lyapunov}
\vspace{-0.4cm}
\end{figure*}
\section{Conclusion}
\label{sec:conclusion}
This paper studied switched affine DAEs with mode-dependent equilibria using hybrid systems tools. Leveraging the quasi-Weierstrass decomposition, we derived a hybrid representation capturing flows on consistency spaces and switching-induced resets. We established Lyapunov conditions for practical stability with an explicit $\mathcal{KL}$ bound (Theorem~\ref{thm:ISS_like}) and characterized the limiting behavior through a computable $\omega$-limit set that is semi-globally practically stable under slow switching (Proposition~\ref{prop: ols_dae}, Theorem~\ref{thm: SGPAS}). These results extend switched DAE stability theory beyond the common-equilibrium setting. For stochastic switching motivated by AI inference workloads, we established uniform global recurrence to a neighborhood of the mode equilibria using Foster--Lyapunov techniques and mode-dependent dwell-time distributions (Theorem~\ref{thm: shds_recurrence}). The neighborhood depends on a probability-weighted inter-equilibrium residual, and the result recovers uniform global asymptotic stability in probability when the projected mode equilibria coincide. We applied the framework to transmission networks with co-located AI data centers, where workload transitions induce distinct consistency spaces and state resets. Simulations on the IEEE 39-bus system for deterministic training and stochastic inference workloads were consistent with high-fidelity dq0 electromagnetic transient simulations, supporting the switched-DAE model as a tractable representation of datacenter workload switching. Future work will consider correlated and Markov-driven mode transitions, operating regimes where the i.i.d.\ approximation breaks down, and distribution networks with co-located data centers.
\appendices

\section{Auxiliary Definitions and Lemmas}\label{Appendix:auxdef}
In this appendix, we collect some auxiliary definitions and results on hybrid systems and switched affine DAEs. We begin with some standard definitions taken from \cite[Sec.~6.3]{goebel2012hybrid}.
\begin{defn}\label{definitionReachable}
    Let $K \subset \mathbb{R}^n$ be a compact set. The (infinite horizon) reachable set from $K$ for the hybrid inclusion \eqref{eqn:hy_sys_def}, denoted as $R(K)$, is defined as $R(K) \coloneqq \{z \in \mathbb{R}^n  :  z=\xi(t,j), \xi \in \mathcal{S}(K), (t,j) \in \text{dom}(\xi)\}$,  where $\mathcal{S}(K)$ denotes the set of solutions to  \eqref{eqn:hy_sys_def} with initial conditions in $K$.
\hfill \qed
\end{defn}
\begin{defn}\label{defn:omega_set}
    Given a set $K$, the $\Omega$-limit set from $K$, denoted by $\Omega(K)$ is defined as  $\Omega(K) \coloneqq \{z \in \mathbb{R}^n: z = \lim_{i \to \infty} \xi_i(t_i,j_i),~\xi_i \in \mathcal{S}(K), (t_i,j_i) \in \text{dom}(\xi_i),~ \lim_{i \to \infty} t_i + j_i = \infty\}$, where $\mathcal{S}(K)$ denotes the set of solutions to system \eqref{eqn:hy_sys_def} with initial conditions in $K$. \hfill \qed
\end{defn} 

%
\noindent 
The following definition will be instrumental for the study of hybrid systems \eqref{eqn:hy_sys_def} that depend on small parameters $\epsilon>0$.
%
%
We first review auxiliary results for switched affine DAEs over a general mode set $\mathcal{Q}\coloneqq\{1,\dots,l\}$, applicable to both training and inference workloads. We begin with the consistency spaces~\cite{nanez2017invariance}, which characterize states compatible with the algebraic constraints of each mode:
\begin{defn}
    \label{defn: cons_space}
Given $q \in \mathcal{Q}$, the \emph{consistency space} $\mathcal{C}_q$ of the $q^{\text{th}}$ mode of system \eqref{eqn: affine_DAE} is defined as:
    \begin{align*}
        \mathcal{C}_q \coloneqq \{ &\xi_0 \in \mathbb{R}^n \, : \, \exists~\tau>0~\text{and}~\text{a continuously differentiable} \\
        & \text{ function } \xi : [0, \tau] \to \mathbb{R}^n \text{ s.t. }\, E_q \Dot{\xi}(t) = A_q \xi(t) + b_q,\\
        &~~\forall \, t \in (0,\tau)\}.
    \end{align*}
    In other words, $\mathcal{C}_q$ denotes the set of initial conditions for which there exists a solution to \eqref{eqn: affine_DAE}.
    \hfill \qed
\end{defn}
The following change of basis allows us to decouple system \eqref{eqn: affine_DAE} into its differential and algebraic parts, called the Quasi-Weierstrass form \cite[Theorem 2.6]{berger2012quasi}: 
\begin{lem}
    \label{lem: QWF}
Suppose that the pair $(E_q, A_q)$ is regular, where $q \in \mathcal{Q}$. Then, there exist invertible matrices $T_q \coloneqq [V_q, W_q]$ and $S_q \coloneqq [E_qV_q, A_qW_q]^{-1}$ that transform $(E_q, A_q)$ into the \emph{Quasi-Weierstrass form}:
    \begin{equation}\label{QWform}
        (S_q E_q T_q, S_q A_q T_q) = \left ( \begin{bmatrix}
            I_{n^q_1} & 0 \\
            0 & N_q
        \end{bmatrix}, \begin{bmatrix}
            J_q & 0 \\
            0 & I_{n^q_2}
        \end{bmatrix} \right ),
    \end{equation}
    for some $J_q \in \mathbb{R}^{n_1^q \times n^q_1}$ and a nilpotent matrix $N_q \in \mathbb{R}^{n_2^q \times n_2^q}$, where $(N_q)^{n_2^q} = 0$, $n_1^q + n_2^q = n$, $V_q \in \mathbb{R}^{n \times n_1^q}$, and $W_q \in \mathbb{R}^{n \times n^q_2}$. \hfill \qed
\end{lem}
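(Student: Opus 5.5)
The plan is the Wong-sequence construction of \cite{berger2012quasi}, with regularity entering through a resolvent at a single point. Fix $q$ and drop the subscript. Define the two Wong sequences
\[
\mathcal V^0:=\mathbb R^n,\quad \mathcal V^{i+1}:=A^{-1}(E\mathcal V^i),\qquad \mathcal W^0:=\{0\},\quad \mathcal W^{i+1}:=E^{-1}(A\mathcal W^i),
\]
where $A^{-1}(\cdot)$ and $E^{-1}(\cdot)$ denote preimages. Both sequences are monotone (the first decreasing, the second increasing), so they stabilize after at most $n$ steps at limits $\mathcal V^*$ and $\mathcal W^*$. By construction these limits satisfy $A\mathcal V^*\subseteq E\mathcal V^*$ and $E\mathcal W^*\subseteq A\mathcal W^*$.

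Next, choose bases $V$ of $\mathcal V^*$ and $W$ of $\mathcal W^*$, and set $T=[V,W]$.
\begin{itemize}
\item The inclusion $A\mathcal V^*\subseteq E\mathcal V^*$ gives a matrix $J$ with $AV=EVJ$.
\item The inclusion $E\mathcal W^*\subseteq A\mathcal W^*$ gives a matrix $N$ with $EW=AWN$.
\end{itemize}
If $T$ and $[EV,AW]$ are both invertible, then with $S:=[EV,AW]^{-1}$ we get
\[
SET=S[EV,\,AWN]=\operatorname{blkdiag}(I,N),\qquad SAT=S[EVJ,\,AW]=\operatorname{blkdiag}(J,I),
\]
which is exactly \eqref{QWform}.

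For nilpotency of $N$, I would choose $W$ adapted to the filtration $\mathcal W^1\subseteq\mathcal W^2\subseteq\cdots$. Since $E\mathcal W^{i+1}\subseteq A\mathcal W^i$, the map $N$ sends the coordinates of $\mathcal W^{i+1}$ into those of $\mathcal W^i$. Hence $N$ is block strictly triangular, so $N^{k}=0$ for some $k\le n_2$.

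The main obstacle is showing that $T$ and $[EV,AW]$ are invertible; this is the only place regularity is used. Since $\det(sE-A)\not\equiv0$, I pick $\lambda$ with $\Lambda:=\lambda E-A$ invertible. I then set $\hat E:=\Lambda^{-1}E$ and $\hat A:=\Lambda^{-1}A=\lambda\hat E-I$. The steps are as follows.
\begin{enumerate}
\item Show by induction on $i$ that the Wong sequences of $(E,A)$ coincide with $\operatorname{im}\hat E^{\,i}$ and $\ker\hat E^{\,i}$ respectively. The key identity is that $Ax\in E\mathcal V$ holds if and only if $x\in\hat E(\mathcal V)+\lambda\hat E$-terms, and these collapse when $\mathcal V$ is $\hat E$-invariant.
\item The Fitting decomposition of $\hat E$ then yields $\mathbb R^n=\mathcal V^*\oplus\mathcal W^*$, so $T$ is invertible.
\item On $\mathcal V^*$, $\hat E$ is invertible, so $\hat E\mathcal V^*=\mathcal V^*$. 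On $\mathcal W^*$, $\hat E$ is nilpotent, so $\hat A=\lambda\hat E-I$ is invertible there and $\hat A\mathcal W^*=\mathcal W^*$.
\item Therefore $\Lambda^{-1}[EV,AW]=[\hat EV,\hat AW]$ has columns spanning $\mathcal V^*\oplus\mathcal W^*=\mathbb R^n$. It follows that $[EV,AW]$ is invertible.
\end{enumerate}
Step 1 is where I expect the bookkeeping to be most delicate, so I would prove it first.

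Finally, set $n_1:=\dim\mathcal V^*$ and $n_2:=\dim\mathcal W^*$; the direct sum gives $n_1+n_2=n$. This completes the construction claimed in Lemma~\ref{lem: QWF}.
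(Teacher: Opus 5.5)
Your proposal is correct. The paper gives no proof of this lemma; it only invokes \cite[Theorem~2.6]{berger2012quasi}. Your construction is exactly the one underlying that citation: the limits $\mathcal V^*,\mathcal W^*$ of the Wong sequences, $T=[V,W]$, $S=[EV,AW]^{-1}$, $AV=EVJ$ and $EW=AWN$. So the skeleton coincides.

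What you add is a self-contained proof of the two invertibility claims, through $\hat E=(\lambda E-A)^{-1}E$ and $\hat A=\lambda\hat E-I$. Premultiplying by an invertible matrix leaves the Wong sequences unchanged, so they become $\operatorname{im}\hat E^{i}$ and $\ker\hat E^{i}$. The Fitting decomposition of $\hat E$ then gives three things at once: $\mathbb R^n=\mathcal V^*\oplus\mathcal W^*$, the invertibility of $[EV,AW]$, and, as an alternative to your filtration argument, the nilpotency of $N$. For the last one, note that $N$ represents $(\hat A|_{\mathcal W^*})^{-1}\hat E|_{\mathcal W^*}$ in the basis $W$. Regularity is used only to produce a single admissible $\lambda$. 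The citation buys brevity; your route makes the role of regularity transparent and is essentially the resolvent (Drazin-inverse) view of consistent initial values.

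One point needs tightening in Step~1. $\hat E$-invariance of $\mathcal V$ alone does not give $\hat A^{-1}(\hat E\mathcal V)=\hat E\mathcal V$. For example, take $\mathcal V=\{0\}$, $E=I$, $A=0$; then $\hat A=0$ and $\hat A^{-1}(\{0\})=\mathbb R^n$.

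What works is a bootstrap. Suppose $\hat Ax=y\in\operatorname{im}\hat E^{i+1}$. Then $x=\lambda\hat Ex-y$, so $x\in\operatorname{im}\hat E^{k}$ implies $x\in\operatorname{im}\hat E^{k+1}$ for $k\le i$. Iterating gives $x\in\operatorname{im}\hat E^{i+1}$.

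For the second sequence, $\hat A$ commutes with $\hat E$ and is invertible on $\ker\hat E^{i}$, where $\hat E$ is nilpotent. Hence $\hat A\ker\hat E^{i}=\ker\hat E^{i}$, and so $\mathcal W^{i+1}=\hat E^{-1}(\ker\hat E^{i})=\ker\hat E^{i+1}$.

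Under the paper's Assumption~\ref{assumpregdae} ($A_q$ invertible) you can simply take $\lambda=0$. Then $\hat A=-I$ and Step~1 is immediate.
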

Using the Quasi-Weierstrass form \eqref{QWform}, for each $q \in \mathcal{Q}$, we introduce the  variables $(v,w)= T_q^{-1}\xi$, to obtain 
    \begin{align*}
        E_q T_q \begin{bmatrix}
            \Dot{v} \\ \Dot{w}
        \end{bmatrix} &= A_q T_q \begin{bmatrix}
            v \\ w
        \end{bmatrix} + b_q \\
        S_q E_q T_q \begin{bmatrix}
            \Dot{v} \\ \Dot{w}
        \end{bmatrix} &= S_q A_q T_q \begin{bmatrix}
            v \\ w
        \end{bmatrix} + S_q b_q \\
        \begin{bmatrix}
            I_{n^q_1} & 0 \\
            0 & N_q
        \end{bmatrix} \begin{bmatrix}
            \Dot{v} \\ \Dot{w}
        \end{bmatrix}&= \begin{bmatrix}
            J_q & 0 \\
            0 & I_{n^q_2}
        \end{bmatrix}\begin{bmatrix}
                v \\ w
            \end{bmatrix} + S_q b_q,
    \end{align*}
    leading to the decoupled equations: $\Dot{v} = J_qv + b^d_q$, and $N_q\Dot{w} = w + b^a_q$ where $S_qb_q =(b^d_q,b^a_q)$. The following lemma is an extension of \cite[Definition 3]{nanez2017invariance}.
\begin{lem}
\label{lem: characterization}
Let Assumption \ref{assump: regularity} hold. Then, $\mathcal{C}_q =  \{\xi \in \mathbb{R}^n \, : \, \xi \in (\text{span}(V_q) - W_q b^a_q)\}$,~$\forall~q\in\mathcal{Q}$. 
\hfill \qed
\end{lem}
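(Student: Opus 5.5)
The plan is to pass to the quasi-Weierstrass coordinates of Lemma~\ref{lem: QWF} and use the index-one part of Assumption~\ref{assump: regularity}, $N_q=0$, to reduce the algebraic subsystem to the constant constraint $w=-b^a_q$. We then prove the two inclusions separately. Fix $q\in\mathcal{Q}$. Since $(E_q,A_q)$ is regular, Lemma~\ref{lem: QWF} provides invertible $T_q=[V_q,W_q]$ and $S_q$. Because $T_q$ is invertible, every $\xi\in\mathbb{R}^n$ has a unique representation $\xi=V_qv+W_qw$ with $(v,w)=T_q^{-1}\xi$.

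For a $C^1$ function $\xi(\cdot)$ we set $(v(t),w(t)):=T_q^{-1}\xi(t)$. Multiplying $E_q\dot\xi=A_q\xi+b_q$ on the left by the invertible $S_q$ shows that the DAE holds at time $t$ if and only if the decoupled system derived after Lemma~\ref{lem: QWF} holds at $t$:
\[
\dot v=J_qv+b^d_q,\qquad N_q\dot w=w+b^a_q .
\]
Under Assumption~\ref{assump: regularity}, $N_q=0$, so the second equation becomes the purely algebraic relation $w(t)=-b^a_q$.

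\emph{Inclusion $\mathcal{C}_q\subseteq\operatorname{span}(V_q)-W_qb^a_q$.} Let $\xi_0\in\mathcal{C}_q$ be as in Definition~\ref{defn: cons_space}, with a $C^1$ solution $\xi$ on $[0,\tau]$ satisfying the DAE on $(0,\tau)$. By the equivalence above, $w(t)=-b^a_q$ for all $t\in(0,\tau)$. Since $w=[0~I]T_q^{-1}\xi$ is continuous on $[0,\tau]$, letting $t\to0^+$ gives $w(0)=-b^a_q$. Hence
\[
\xi_0=V_qv(0)-W_qb^a_q\in\operatorname{span}(V_q)-W_qb^a_q .
\]
This boundary step is the only delicate point, because the DAE is imposed only on the open interval, and continuity of the solution up to $t=0$ closes it.

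\emph{Inclusion $\operatorname{span}(V_q)-W_qb^a_q\subseteq\mathcal{C}_q$.} Take $\xi_0=V_qv_0-W_qb^a_q$. Let $v(t)$ be the unique (smooth) solution of the linear ODE $\dot v=J_qv+b^d_q$ with $v(0)=v_0$, set $w(t)\equiv-b^a_q$, and define $\xi(t):=T_q(v(t),w(t))$ on an arbitrary interval $[0,\tau]$. Then $\xi$ is $C^1$ and $\xi(0)=\xi_0$. Since $\dot w=0$, the pair $(v,w)$ satisfies both decoupled equations with $N_q=0$. Multiplying by $S_q^{-1}$ gives back $E_q\dot\xi=A_q\xi+b_q$ on $(0,\tau)$, so $\xi_0\in\mathcal{C}_q$.

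Combining the two inclusions yields the claimed characterization for every $q\in\mathcal{Q}$. The nilpotent-index assumption is used only to replace $N_q\dot w=w+b^a_q$ by $w=-b^a_q$. Without it, the consistent $w$-component would involve derivatives of the affine term, and in general a smooth solution would not exist for arbitrary $v_0$.
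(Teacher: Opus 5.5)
Your proof is correct and uses the same quasi-Weierstrass decoupling as the paper. The difference is in how you pin down the $w$-component. You invoke the index-one part of Assumption~\ref{assump: regularity}, $N_q=0$, so that $N_q\dot w=w+b^a_q$ collapses to the algebraic constraint $w\equiv-b^a_q$. The paper instead keeps $N_q$ a general nilpotent matrix. It notes that the constant $-b^a_q$ is always a solution, then proves uniqueness by a downward induction showing $N_q^{i}\tilde w\equiv0$ for the homogeneous equation $N_q\dot{\tilde w}=\tilde w$; this uses only that $\tilde w$ is $C^1$.

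Your route is shorter. You also treat the endpoint $t=0$ explicitly via continuity, which the paper passes over. The paper's route is more general: it shows the characterization holds for every regular pair with a constant affine term, whatever the index.

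This makes your closing remark wrong. Because $b_q$ is constant, its derivatives vanish, so $w\equiv-b^a_q$ is the unique consistent $w$-trajectory for any nilpotent $N_q$. Moreover, $v_0$ is always free because the $v$-subsystem is a decoupled linear ODE. The index-one hypothesis is not what makes this lemma true. Its role in the paper is to rule out impulsive responses at switching instants, as stated in the remark following Assumption~\ref{assump: regularity}.
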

\noindent
\textbf{Proof:} Consider the change of variables $(v,w) = T_q^{-1}\xi$ from
Definition~\ref{lem: QWF}. Under this change of coordinates, System \eqref{eqn: affine_DAE}
decouples as $\dot v = J_qv + b_q^d$, $N_q\dot w = w + b^a_q$. The first equation is a linear ordinary differential equation and therefore
admits a $C^1$ solution from any initial condition $v(0)\in\mathbb{R}^{n^q_1}$. For the second equation, for each fixed $q\in\mathcal{Q}$ we have that $b^a_q$ is constant, thus the
constant function $w(t)\equiv -b_q^a$ satisfies $\dot w = 0$ and
$N_q\dot w = 0 = w + b_q^q$ for every nilpotent $N_q$, so it is a
solution. We now show it is the \emph{only} admissible one: subtracting
this particular solution from any other, the difference
$\tilde w := w - (-b_q^a)$ must satisfy the homogeneous equation
$N_q\dot{\tilde w} = \tilde w$. Indeed, since $N_q$ is nilpotent, let $k \geq 1$ be smallest such that
$N_q^k = 0$. We show $N_q^i \tilde w \equiv 0$ for
$i = k-1, k-2, \ldots, 0$ by downward induction on $i$, using at each
step only the single derivative $\dot{\tilde w}$. First, we multiply $\tilde w = N_q \dot{\tilde
w}$ on the left by $N_q^{k-1}$ to get $N_q^{k-1} \tilde w = N_q^{k-1} \big(N_q \dot{\tilde w}\big)
= N_q^{k} \dot{\tilde w} = 0$, using $N_q^k = 0$. Hence $N_q^{k-1} \tilde w \equiv 0$. Now, Suppose $N_q^{i+1} \tilde w \equiv 0$ for some
$0 \leq i \leq k-2$. Since this is the identically zero function, it
is (trivially) differentiable, and $N_q^{i+1} \dot{\tilde w} = \frac{d}{dt}\big(N_q^{i+1} \tilde w\big)
\equiv 0$. Multiplying $\tilde w = N_q \dot{\tilde w}$ on the left by $N_q^i$
then gives $N_q^{i} \tilde w = N_q^{i} \big(N_q \dot{\tilde w}\big)
= N_q^{i+1} \dot{\tilde w} = 0$, so $N_q^{i} \tilde w \equiv 0$. Downward induction from $i = k-1$ to $i = 0$ yields $N_q^0 \tilde w = \tilde w
\equiv 0$, so $\tilde w \equiv 0$ is the unique $C^1$ solution of the
homogeneous equation. Hence $w(0) = -b^a_q$ is the only admissible
initial condition for the $w$-component. Therefore, a classical $C^1$ solution of $E_q\dot\xi = A_q\xi + b_q$
exists starting from $\xi_0\in\mathbb{R}^n$ if and only if the
$w$-component of $T_q^{-1}\xi_0$ equals $-b_q^a$, with the
$v$-component free. Equivalently,
\begin{align*}
    \mathcal{C}_q
    &= \left\{\, \begin{bmatrix} V_q & W_q \end{bmatrix}
        \begin{bmatrix} v_0 \\ -b_q^a \end{bmatrix}
        \;:\; v_0 \in \mathbb{R}^{n^q_1} \,\right\} \\
    &= \left\{\, V_qv_0 - W_qb_q^a \;:\; v_0\in\mathbb{R}^{n^q_1} \,\right\}
    = \mathrm{span}(V_q) - W_qb_q^a,
\end{align*}
which completes the proof. \hfill $\blacksquare$

\vspace{0.1cm}
To ensure that solutions of system \eqref{eqn: affine_DAE} are well defined for all time, the state must remain consistent whenever the DAE \eqref{eqn: affine_DAE} switches modes. To do so, we use the notion of consistency projectors \cite[Definition 4]{nanez2017invariance}: 
\begin{defn}
    \label{defn: CP}
Given a mode $q \in \mathcal{Q}$, the \emph{consistency projector} $\Pi_q : \mathbb{R}^n \to \mathcal{C}_q$ of \eqref{eqn: affine_DAE} is given by $\Pi_q(\xi):=L_q \xi + d_q$, where $L_q \;:=\; T_q \begin{bmatrix} I_{n^q_1} & 0 \\ 0 & 0 \end{bmatrix} T^{-1}_q,$ and $d_q \;:=\; -W_q b^a_q$, and where $T_q$ and $W_q$ are given in Lemma \ref{lem: QWF}. \hfill \qed
\end{defn}

Standard Caratheodory solutions to each subsystem of \eqref{eqn: affine_DAE} can be obtained as solutions of an associated ODE. The following notion of differential projector is instrumental to isolate the ODE-governed component of the DAE dynamics:
\begin{defn}
    \label{defn: diff_proj}
  Given a $q \in \mathcal{Q}$, the \emph{differential projector} of the $q^{th}$ mode of \eqref{eqn: affine_DAE} is given by $\Pi_{q}^{\text{diff}}  = T_q \begin{bmatrix}
            I_{n^q_1} & 0 \\
            0 & 0_{n^q_2}
        \end{bmatrix} S_q$, where $T_q$ and $S_q$ are given by Lemma \ref{lem: QWF}. \hfill $\square$ 
\end{defn}
To extract the non-ODE component of the DAE dynamics, we introduce the impulse projector, which also enters the DAE solution formula.
\begin{defn}
    \label{defn: imp_proj}
   Given $q \in \mathcal{Q}$, the \emph{impulse projector} of the $q^{\text{th}}$ mode of the system \eqref{eqn: affine_DAE} is given by $\Pi_q^{\text{imp}} = T_q \begin{bmatrix}
            0 & 0 \\ 0 & I_{n^q_2} 
        \end{bmatrix} S_q$, where $T_q$ and $S_q$ are given by Lemma \ref{lem: QWF}. \hfill $\square$ 
\end{defn}

\section{Proofs for the Training-Phase Switching Case}
\label{appen:proofs}
\subsection{Proof of Theorem \ref{thm:ISS_like}}
To aid in our analysis, we consider the Lyapunov candidate $V(x) = {\rm e}^{\mu \tau} \Tilde{\xi} ^\top P_q \Tilde{\xi}$. The quantity $\mu >0$ is a free parameter. For each $q\in\mathcal Q_{\mathrm{tr}}$, the matrix $P_q$ is
symmetric and positive definite on $\operatorname{span}(V_q)$. To establish Theorem \ref{thm:ISS_like} we first prove two key technical lemmas. The first lemma characterizes positive definiteness on consistency spaces while the second bounds quadratic forms. To that end, for each $q \in \mathcal{Q}_{tr}$, let
$O_q \in \mathbb{R}^{n \times \dim(\text{span}(V_q))}$ denote a matrix whose
columns satisfy
\begin{equation}\label{conditionspan}
    O_q^\top O_q = I, \qquad
    \operatorname{range}(O_q) = \operatorname{span}(V_q).
\end{equation}
Since $\mathcal{C}_q = \operatorname{span}(V_q) - W_q b_q^a$ is in general
an affine (not linear) set, the matrix $O_q$ is taken
to satisfy $\operatorname{range}(O_q) = \operatorname{span}(V_q)$, the
linear direction space of $\mathcal{C}_q$. 
\begin{lem}
    \label{lem:PD}
   Let $q \in \mathcal{Q}_{\mathrm{tr}}$, and let
$O_q \in \mathbb{R}^{n \times \dim(\operatorname{span}(V_q))}$
have orthonormal columns satisfying \eqref{conditionspan}. Then, $P_q \succ 0$ on $\operatorname{span}(V_q)$ if and only if $O_q^\top P_q O_q \succ 0$.
\end{lem}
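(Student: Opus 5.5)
The plan is to reduce positive definiteness on the subspace to a statement about the compressed matrix, using the fact that $O_q$ parametrizes $\operatorname{span}(V_q)$ bijectively. Recall that ``$P_q \succ 0$ on $\operatorname{span}(V_q)$'' means $\eta^\top P_q \eta > 0$ for every nonzero $\eta \in \operatorname{span}(V_q)$. By \eqref{conditionspan}, $\operatorname{range}(O_q)=\operatorname{span}(V_q)$. Since $O_q^\top O_q = I$, the columns of $O_q$ are linearly independent. Hence the map $z \mapsto O_q z$ is a linear bijection from $\mathbb{R}^{\dim(\operatorname{span}(V_q))}$ onto $\operatorname{span}(V_q)$. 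Injectivity follows because $O_q z = 0$ implies $z = O_q^\top O_q z = 0$.

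For the forward direction, assume $P_q \succ 0$ on $\operatorname{span}(V_q)$ and take any $z \neq 0$. Then $\eta := O_q z$ is a nonzero element of $\operatorname{span}(V_q)$ by injectivity. Therefore $z^\top (O_q^\top P_q O_q) z = \eta^\top P_q \eta > 0$. The matrix $O_q^\top P_q O_q$ is symmetric because $P_q$ is, so it is positive definite.

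For the converse, assume $O_q^\top P_q O_q \succ 0$ and take any nonzero $\eta \in \operatorname{span}(V_q)$. By surjectivity there exists $z$ with $\eta = O_q z$, and $z \neq 0$ since $\eta \neq 0$. Then $\eta^\top P_q \eta = z^\top O_q^\top P_q O_q z > 0$.

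As a byproduct, for all $\eta = O_q z \in \operatorname{span}(V_q)$ one has $|\eta| = |z|$ by orthonormality. This gives the Rayleigh bounds
\[
\lambda_{\min}(O_q^\top P_q O_q)|\eta|^2 \leq \eta^\top P_q \eta \leq \lambda_{\max}(O_q^\top P_q O_q)|\eta|^2 .
\]
These bounds are what later make $\underline p,\overline p>0$ meaningful. There is no real obstacle here; the only point requiring care is injectivity of $O_q$, which is exactly what the orthonormality condition supplies.
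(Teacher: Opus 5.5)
Your proof is correct and follows the paper's argument essentially step for step: for the forward direction you use injectivity of $O_q$, which comes from its orthonormal columns, and for the converse you use that $O_q$ maps onto $\operatorname{span}(V_q)$. Your Rayleigh-bound byproduct is exactly the content of the paper's next result, Lemma~\ref{lem: eig_vals}, and the paper proves it the same way, via $|O_q z| = |z|$.
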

\noindent 
\textbf{Proof:} 
%
   Suppose first that $P_q \succ 0$ on $\operatorname{span}(V_q)$.
Let $z \in \mathbb{R}^{\dim(\operatorname{span}(V_q))}$ be nonzero.
Since $O_q$ has full column rank, $\eta := O_q z$ is nonzero, and by construction $\eta \in \operatorname{range}(O_q)
=
\operatorname{span}(V_q)$. Therefore, $z^\top O_q^\top P_q O_q z=
\eta^\top P_q \eta>0$. Since this holds for every nonzero $z$, it follows that $O_q^\top P_q O_q \succ 0$. Conversely, suppose that $O_q^\top P_q O_q \succ 0$.
Let $\eta \in \operatorname{span}(V_q)$ be nonzero.
Since the columns of $O_q$ form an orthonormal basis of
$\operatorname{span}(V_q)$, there exists a unique nonzero vector
$z \in \mathbb{R}^{\dim(\operatorname{span}(V_q))}$ such that $\eta = O_q z$. Hence, $\eta^\top P_q \eta=z^\top O_q^\top P_q O_q z>0$. Thus, $P_q \succ 0$ on $\operatorname{span}(V_q)$.  \hfill $\blacksquare$
\begin{lem}
    \label{lem: eig_vals}
    Let $q \in \mathcal{Q}_{\mathrm{tr}}$, and let
$O_q \in \mathbb{R}^{n \times \dim(\operatorname{span}(V_q))}$
have orthonormal columns satisfying \eqref{conditionspan}. If $P_q \succ 0$ on $\operatorname{span}(V_q)$, then $\lambda_{\min}\!\left(O_q^\top P_q O_q\right)\|\eta\|^2
\leq
\eta^\top P_q \eta
\leq
\lambda_{\max}\!\left(O_q^\top P_q O_q\right)\|\eta\|^2$, for all $\eta \in \operatorname{span}(V_q)$.
\end{lem}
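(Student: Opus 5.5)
The plan is to reduce the quadratic form on $\operatorname{span}(V_q)$ to a quadratic form on the coordinate space $\mathbb{R}^{\dim(\operatorname{span}(V_q))}$ via the orthonormal basis $O_q$. Then we apply the standard Rayleigh quotient bounds to the symmetric matrix $O_q^\top P_q O_q$.

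\emph{Step 1 (coordinates).} Fix $\eta \in \operatorname{span}(V_q)$. Since $\operatorname{range}(O_q)=\operatorname{span}(V_q)$ by \eqref{conditionspan}, there is a (unique) $z$ with $\eta = O_q z$.

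\emph{Step 2 (norm preservation).} Because $O_q^\top O_q = I$, we have $\|\eta\|^2 = z^\top O_q^\top O_q z = \|z\|^2$. Moreover,
\[
\eta^\top P_q \eta = z^\top \bigl(O_q^\top P_q O_q\bigr) z .
\]

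\emph{Step 3 (Rayleigh bounds).} The matrix $O_q^\top P_q O_q$ is symmetric, since $P_q$ is. By Lemma~\ref{lem:PD} it is also positive definite. Hence the Courant--Fischer (Rayleigh quotient) inequality gives
\[
\lambda_{\min}(O_q^\top P_q O_q)\|z\|^2 \le z^\top O_q^\top P_q O_q z \le \lambda_{\max}(O_q^\top P_q O_q)\|z\|^2 .
\]
Substituting Step 2 into this inequality yields the claim. Positive definiteness guarantees $\lambda_{\min}>0$, so the lower bound is meaningful, although it is not needed for the inequality itself.

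There is no real obstacle here. The only point requiring care is Step 2: it uses orthonormality of the columns of $O_q$, not merely the range condition, to identify $\|\eta\|$ with $\|z\|$. Without orthonormality, the eigenvalue bounds would have to be scaled by the extreme singular values of $O_q$.
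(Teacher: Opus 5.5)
Your proposal is correct and follows the paper's proof essentially step for step: both write $\eta = O_q z$, use $O_q^\top O_q = I$ to obtain $\|\eta\| = \|z\|$ and $\eta^\top P_q \eta = z^\top O_q^\top P_q O_q z$, invoke Lemma~\ref{lem:PD}, and then apply the Rayleigh quotient bounds. Your remark that only the symmetry of $O_q^\top P_q O_q$, and not its positive definiteness, is needed for the two-sided inequality is also correct.
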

\noindent 
 \textbf{Proof:} 
Let $\eta \in \operatorname{span}(V_q)$. Since the columns of
$O_q$ form an orthonormal basis of $\operatorname{span}(V_q)$,
there exists a unique vector
$z \in \mathbb{R}^{\dim(\operatorname{span}(V_q))}$ such that $\eta = O_q z$. Using $O_q^\top O_q = I$, we have $\|\eta\|^2=\|O_q z\|^2=z^\top O_q^\top O_q z=\|z\|^2$. Moreover, $\eta^\top P_q \eta=z^\top O_q^\top P_q O_q z$. Since $P_q \succ 0$ on $\operatorname{span}(V_q)$,
Lemma~3 implies that $O_q^\top P_q O_q \succ 0$. Therefore, by the standard Rayleigh quotient bounds,
\[
\lambda_{\min}\!\left(O_q^\top P_q O_q\right)\|z\|^2
\leq
z^\top O_q^\top P_q O_q z
\leq
\lambda_{\max}\!\left(O_q^\top P_q O_q\right)\|z\|^2.
\]
Using $\|z\|=\|\eta\|$ and
$\eta^\top P_q\eta=z^\top O_q^\top P_q O_q z$
gives the desired result.
\hfill $\blacksquare$

Using Lemmas \ref{lem:PD} and \ref{lem: eig_vals}, the following bounds on $V$ hold for all $x \in C$:
\begin{subequations}
\label{eqn: class_k_bounds}
\begin{align}
    &\min_{q\in\mathcal Q_{\mathrm{tr}}} \lambda_{\text{min}}(O^\top_qP_q O_q) \cdot \lvert x \rvert^2_{\mathcal{A}} \leq V(x), \\
    & V(x) \leq { \rm e}^{\mu N_0} \cdot \max_{q \in \mathcal{Q}_{\mathrm{tr}}} \lambda_{\text{max}} (O^\top_qP_q O_q) \cdot \lvert x \rvert^2_{\mathcal{A}}.
\end{align}    
\end{subequations}
Next, we evaluate the evolution in time of $V$ during flows and jumps. By considering the gradient of $V$ at a point $x \in C$, along an arbitrary direction of flow $f \in F(x)$, we obtain the bound:
\begin{align*}
    \langle\nabla V(x), f\rangle
    &\leq \bigl\langle {\rm e}^{\mu\tau}(P_q + P_q^\top)\tilde{\xi},\; 0,\; \mu {\rm e}^{\mu\tau}\tilde{\xi}^\top P_q \tilde{\xi}\bigr\rangle\\
    &\quad \cdot \bigl(\Pi_q^{\mathrm{diff}} A_q \tilde{\xi},\; 0,\; \sfrac{1}{\tau_d}\bigr)\\
    &= {\rm e}^{\mu\tau}\,\tilde{\xi}^\top\!\bigl(A_q^\top(\Pi_q^{\mathrm{diff}})^\top P_q
+P_q\Pi_q^{\mathrm{diff}}A_q\bigr)\tilde{\xi}\\
    &\quad + \frac{\mu}{\tau_d}\, V(x).
\end{align*}
Using Assumption~\ref{assump: matrices}, we get the inequality $\langle\nabla V(x), f\rangle \leq - {\rm e}^{\mu \tau} \Tilde{\xi}^\top M_q \Tilde{\xi} + \frac{\mu}{\tau_d} V(x)$ for all $x \in C$. Using Lemma \ref{lem: eig_vals} and the upper bound in \eqref{eqn: class_k_bounds}, we obtain 
\begin{equation}
    \label{eqn: flow_bound}
    \langle\nabla V(x), f\rangle \leq \left(\frac{\mu}{\tau_d} - \lambda\right)V(x), \quad \forall x \in C.
\end{equation}
We note that strong decrease of $V$ during flows holds precisely when $\frac{1}{\tau_d} < \sfrac{\lambda}{\mu}$. 
\smallskip
\par \noindent
To establish the decrease of the Lyapunov function during jumps, let $y \in D$, $g \in G(y)$ and consider the quantity
\begin{align*}
    V(g) &=  {\rm e}^{\mu \tau^+} (\Tilde{\xi}^+)^\top P_{q^+} (\Tilde{\xi}^+) \\
    & = {\rm e}^{\mu(\tau - 1)} \big(L_{q^+}\tilde{\xi} + \Delta_{q,q^+} \big)^\top P_{q^+} \big(L_{q^+}\tilde{\xi} + \Delta_{q,q^+}\big).
\end{align*}
Expanding this quadratic gives us three terms:
\begin{align*}
    V(g) = e^{\mu(\tau-1)} \Big[
    \tilde{\xi}^\top L_{q^+}^\top P_{q^+} L_{q^+} \tilde{\xi}
    &+ \underbrace{2\, \tilde{\xi}^\top L_{q^+}^\top P_{q^+}  \Delta_{q,q^+}}_{\text{(CT)}} \\
    &+ \Delta_{q,q^+}^\top P_{q^+} \Delta_{q,q^+}
\Big].
\end{align*}
We now apply Young's inequality with free weight $\theta > 0$ on the cross term (CT). That is, we use $2a^\top b \leq \theta a^\top a + \frac{1}{\theta} b^\top b$, with $a = P^{1/2}_{q^+}L_{q^+}\Tilde{\xi}, b = P^{1/2}_{q^+}\Delta_{q,q^+}$. This gives us
\begin{align*}
    V(g) \leq e^{\mu(\tau-1)} \Bigg[
    \underbrace{{\Big(1+\theta \Big)\, \tilde{\xi}^\top L_{q^+}^\top P_{q^+} L_{q^+} \tilde{\xi}}}_{\text{(I)}} \\ 
    + \underbrace{\left ( 1+\frac{1}{\theta} \right) \Delta_{q,q^+}^\top  P_{q^+} \Delta_{q,q^+}}_{\text{(II)}}
\Bigg].
\end{align*}
We now bound the terms (I) and (II) separately. 
For term (I), we first multiply and divide by the quantity $V(y)=
{\rm e}^{\mu\tau}\Tilde{\xi}^{\top}P_q\Tilde{\xi}.$ to get
\begin{align*}
    e^{\mu(\tau-1)}(1+\theta)\, \tilde{\xi}^\top L_{q^+}^\top P_{q^+} L_{q^+} \tilde{\xi}
&= e^{-\mu}(1+\theta)\, V(y) \cdot\\
&
\underbrace{\frac{\tilde{\xi}^\top L_{q^+}^\top P_{q^+} L_{q^+} \tilde{\xi}}{\tilde{\xi}^\top P_q \tilde{\xi}}}_{\displaystyle \le\, \gamma}.
\end{align*}
where we have used Lemma \ref{lem: eig_vals} for the upper bound above. For term (II), we have the bound
\begin{align*}
    e^{\mu(\tau-1)} \left(1+\frac{1}{\theta}\right) \Delta_{q,q^+}^\top  P_{q^+}  \Delta_{q,q^+}
\;&\le\;
e^{-\mu} e^{\mu N_0} \left(1+\frac{1}{\theta}\right) \cdot \\
&\Delta_{q,q^+}^\top  P_{q^+} \Delta_{q,q^+}. \\
& \leq e^{-\mu} e^{\mu N_0} \left ( 1 + \frac{1}{\theta} \right ) \alpha.  
\end{align*}
Combining the two bounds for the terms (I) and (II), we get $V(g) \leq \kappa V(y) + c$ for all $y \in D$ and $g \in G(y)$,
%
%
where $\kappa = {\rm e}^{-\mu}(1+\theta)\gamma$, and $c = {\rm e}^{-\mu}{\rm e}^{\mu N_0}\!\Bigl(1+\tfrac{1}{\theta}\Bigr)\alpha$. We observe that $\kappa < 1$ holds precisely under
$\theta < {\rm e}^{\mu}/\gamma - 1$, which is feasible if and only if $\mu > \ln\gamma$. 

For the purpose of completeness, we now present the remainder of the proof to obtain the $\mathcal{KL}$-bound
\eqref{eq: kl_bound}. Throughout, $\rho := \lambda -
\frac{\mu}{\tau_d} > 0$, and $\kappa$, $c$ are as defined in the proof.

\smallskip \noindent
Let $\mathrm{dom}(x) \subset \mathbb{R}_{\geq 0} \times \mathbb{N}$
be the hybrid time domain of the solution, and let $s_j :=
\inf\{t \geq 0 : (t,j) \in \mathrm{dom}(x)\}$ denote the continuous
time at which the $j$-th jump occurs. Set $s_0 := 0$, $\Delta_j:=s_j-s_{j-1}\geq0.$, and let $a_j := V(s_j, j)$ denote the value of
the Lyapunov function immediately \emph{after} the $j$-th jump. On
the interval $[s_{j-1}, s_j]$, the solution flows without jumping,
so the flow bound~\eqref{eqn: flow_bound} applies over the full
duration $\Delta_j$:
\begin{equation*}
    V(s_j^-, j-1) \leq {\rm e}^{-\rho \Delta_j}\, a_{j-1},
\end{equation*}
where $V(s_j^-, j-1)$ is the value immediately \emph{before} the
$j$-th jump. Applying the jump bound at
$(s_j, j)$ then gives the recursion
\begin{equation}\label{eq: A1}
    a_j \leq \kappa\, {\rm e}^{-\rho \Delta_j}\, a_{j-1} + c.
\end{equation}
The factor $\kappa < 1$ captures the contraction introduced by the
jump, while $c \geq 0$ accounts for the shift in equilibrium at each
mode transition.

\smallskip \noindent
Applying~\eqref{eq: A1} repeatedly from $j$ down to $0$ and using
$s_j = \sum_{i=1}^{j} \Delta_i$, we obtain $a_j \leq \kappa^j {\rm e}^{-\rho s_j} a_0
    + c\sum_{i=1}^{j} \kappa^{j-i} {\rm e}^{-\rho(s_j - s_i)}$.
Bounding
${\rm e}^{-\rho(s_j-s_i)} \leq 1$ and summing the geometric series
in $\kappa < 1$ yields
\begin{equation}\label{eq: A2}
    a_j \leq \kappa^j {\rm e}^{-\rho s_j} a_0 + \frac{c}{1-\kappa}.
\end{equation}

\smallskip \noindent
The bound~\eqref{eq: A2} holds only at jump times. To extend it to
all $(t,j) \in \mathrm{dom}(x)$, fix any $j$ and let $t \in [s_j,
s_{j+1})$, so that the solution is in its $j$-th flow interval with
no jumps occurring in $(s_j, t]$. Applying the flow
bound~\eqref{eqn: flow_bound} over the elapsed flow time $t - s_j$
gives $V(t,j) \leq {\rm e}^{-\rho(t - s_j)}\, a_j$. Substituting~\eqref{eq: A2} and using ${\rm e}^{-\rho(t-s_j)} \cdot
{\rm e}^{-\rho s_j} = {\rm e}^{-\rho t}$,
\begin{equation}\label{eq: A3}
    V(t,j) \leq {\rm e}^{-\rho t}\, \kappa^j\, V(0,0) +
    \frac{c}{1-\kappa},
\end{equation}
which holds for all $(t,j) \in \mathrm{dom}(x)$.

\smallskip \noindent
The transient term in~\eqref{eq: A3} decays in $t$ at rate $\rho$
and in $j$ at rate $-\ln\kappa > 0$. To obtain a single rate of
decay in the hybrid time $t + j$, let $\eta := \min\{\rho,\; -\ln\kappa\} > 0$. Then ${\rm e}^{-\rho t} \leq {\rm e}^{-\eta t}$ and $\kappa^j =
{\rm e}^{j\ln\kappa} \leq {\rm e}^{-\eta j}$, so that ${\rm e}^{-\rho t}\,\kappa^j \leq {\rm e}^{-\eta t}\,
    {\rm e}^{-\eta j} = {\rm e}^{-\eta(t+j)}$. Substituting into~\eqref{eq: A3},
\begin{equation}\label{eq: A4}
    V(t,j) \leq {\rm e}^{-\eta(t+j)}\, V(0,0) + \frac{c}{1-\kappa}.
\end{equation}

\smallskip \noindent
The bounds~\eqref{eqn: class_k_bounds} give $\underline{p}\,|x|_{\mathcal{A}}^2 \;\leq\; V(x) \;\leq\;
    {\rm e}^{\mu N_0}\overline{p}\,|x|_{\mathcal{A}}^2$. Applying the upper bound to
$V(0,0)$ and the lower bound to $V(t,j)$ in~\eqref{eq: A4},
\begin{equation*}
    \underline{p}\,|x(t,j)|_{\mathcal{A}}^2 \leq
    {\rm e}^{-\eta(t+j)}\,{\rm e}^{\mu N_0}\overline{p}\,
    |x(0,0)|_{\mathcal{A}}^2 + \frac{c}{1-\kappa}.
\end{equation*}
Taking square roots and applying $\sqrt{a + b} \leq \sqrt{a} +
\sqrt{b}$,
\begin{equation*}
    |x(t,j)|_{\mathcal{A}} \leq
    \sqrt{\frac{{\rm e}^{\mu N_0}\overline{p}}{\underline{p}}}\,
    {\rm e}^{-\eta(t+j)/2}\,|x(0,0)|_{\mathcal{A}}
    + \sqrt{\frac{c}{\underline{p}\,(1-\kappa)}}.
\end{equation*}
Substituting $c = {\rm e}^{-\mu}{\rm e}^{\mu N_0}(1+\tfrac{1}{\theta})\alpha$
and $1 - \kappa = {\rm e}^{-\mu}({\rm e}^{\mu} - (1+\theta)\gamma)$
into the residual term,
\begin{equation*}
    \sqrt{\frac{c}{\underline{p}\,(1-\kappa)}}
    = \sqrt{\frac{{\rm e}^{\mu N_0}\bigl(1+\tfrac{1}{\theta}\bigr)\alpha}
             {\underline{p}\,\bigl({\rm e}^{\mu} -
             (1+\theta)\gamma\bigr)}}
    = \nu(\alpha),
\end{equation*}
which completes the proof of~\eqref{eq: kl_bound}. \hfill $\blacksquare$
\subsection{Proof of Theorem 2}
To characterize the limiting behavior of system \eqref{eqn: equiv_2}, we consider a HDS for which $\tau_d\to\infty$, i.e., only a finite number of jumps is allowed:
\begin{subequations}
    \label{eqn: ideal_hybrid}
    \begin{align}
    \dot{\xi}=\Pi^{\text{diff}}_{q} (A_q \xi +& b_q),~~~\dot{q}=0,~~~\dot{\tau}=0,
        ~ x \in C  \\[2mm]
        \begin{bmatrix}
            (\xi ^+, q^+)^\top \\
            \tau^+
        \end{bmatrix} &\in 
        \begin{bmatrix}
        \{(\Pi_{z}\xi,z):z\in\mathcal{Q}_{\text{tr}}\backslash\{q\}\}\\
            \tau - 1
        \end{bmatrix},  ~ x \in D.
    \end{align}
\end{subequations} 
For this system, and for each $q \in \mathcal{Q}_{\text{tr}}$ and $j \in \mathbb{Z}_{\geq 0}$, define the set  $\mathcal{A}_{j,q}=\overline{R \left( \left(\left ( \{\xi^*_q\} + (j+1)^{-1}\mathbb{B} \right )\cap\mathcal{C}_q\right) \times \{q\} \times [0,1]\right )}$, as well as the sets
\begin{align}\label{eqn:omega_limit_set}
        \mathcal{A}_q  \coloneqq \bigcap_{j \in \mathbb{Z}_{\geq 0}} \mathcal{A}_{j,q},~~~~\text{and}~~~~~  \mathcal B\coloneqq
\bigcup_{q\in\mathcal Q_{\mathrm{tr}}}\mathcal A_q
\end{align}
Adapting the proof of \cite[Proposition 4]{baradaran2020omega} to the DAE case, it can be shown that the set $\mathcal{B}$ is the Omega-limit set of System \eqref{eqn: ideal_hybrid} starting from a sufficiently large compact set $K$ that contains the set $\left ( \bigcup_{q \in \mathcal{Q}_{\mathrm{tr}}} \{\xi^*_q\} \times \{q\} \right ) \times [0,N_0]$ in its interior. In what follows, we provide an explicit characterization of this Omega-limit set using the solution formula of \eqref{eqn: dae_sol}. 
%
%
%
\begin{prop}
    \label{prop: ols_dae}
    Assume $N_0=1$ and suppose that Assumptions
\ref{assumpregdae}, \ref{assump: regularity},
and \ref{assump: matrices} hold. Then, the sets $\mathcal{B}$ and
    \begin{align}
\mathcal{A}
:=
\bigcup_{q\in\mathcal{Q}_{\mathrm{tr}}}
\Bigg[
&\{\xi_q^\star\}\times\{q\}\times[0,1]
\\
&{}\cup
\bigcup_{\substack{r\in\mathcal{Q}_{\mathrm{tr}}\notag\\ r\neq q}}
\overline{
\left\{
\Theta_r^t\bigl(\Pi_r(\xi_q^\star)\bigr)
:
t\geq 0
\right\}}
\times\{r\}\times\{0\}
\Bigg].
\end{align}
    are equal. 
\end{prop}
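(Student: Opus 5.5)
We prove the equality $\mathcal B=\mathcal A$ by double inclusion. We work with the limiting system \eqref{eqn: ideal_hybrid} with $N_0=1$. Here $\dot\tau=0$, jumps require $\tau\ge1$, and each jump decreases $\tau$ by one. Hence from an initial condition with $\tau\in[0,1]$ a solution admits at most one jump, and only if it starts at $\tau=1$; after the jump $\tau=0$ and the solution flows forever in the new mode. The reachable set from $((\{\xi_q^*\}+(j+1)^{-1}\mathbb B)\cap\mathcal C_q)\times\{q\}\times[0,1]$ is therefore the union of two pieces. The first is the mode-$q$ flow from the ball, kept at its constant $\tau\in[0,1]$. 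The second is, for each $r\neq q$ and each point $\xi$ of that first piece with $\tau=1$, the mode-$r$ flow $\{\Theta_r^t(\Pi_r\xi):t\ge0\}\times\{r\}\times\{0\}$. This explicit decomposition uses the solution formula \eqref{eqn: dae_sol} and the fact that along flows $\xi$ stays in $\mathcal C_q$, which follows from Lemma~\ref{lem: characterization}.

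\emph{Inclusion $\mathcal A\subset\mathcal B$.} Start at $\xi_q^*$, which is consistent and an equilibrium. The mode-$q$ flow stays at $\xi_q^*$ for every $\tau\in[0,1]$, so $\{\xi_q^*\}\times\{q\}\times[0,1]\subset R(\cdot)$ for every $j$. From $\tau=1$ we jump to $(\Pi_r\xi_q^*,r,0)$ and then flow, which shows the orbit $\{\Theta_r^t(\Pi_r\xi_q^*)\}\times\{r\}\times\{0\}$ lies in every $\mathcal A_{j,q}$. Taking closures and intersecting over $j$ gives $\mathcal A\subset\mathcal A_q\cup\bigcup_{r}\mathcal A_r$-type pieces, hence $\mathcal A\subset\mathcal B$. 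Since the orbit from $\Pi_r\xi_q^*$ belongs to $\mathcal A_{j,q}$ (it is reached from a $q$-initial condition), we index it accordingly; this matches the union structure of $\mathcal A$.

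\emph{Inclusion $\mathcal B\subset\mathcal A$.} Take $z\in\mathcal A_q$. Then $z\in\mathcal A_{j,q}$ for every $j$, so there are points $z_j\in R(\cdot)$ with $|z_j-z|\le 1/j$. There are two cases.

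In the first case, infinitely many $z_j$ come from pure mode-$q$ flows. Assumption~\ref{assump: matrices} gives $|\Theta_q^t(\xi)-\xi_q^*|\le c|\xi-\xi_q^*|$ uniformly in $t$; the argument is the same Lyapunov estimate on $\mathrm{span}(V_q)$ used in Theorem~\ref{thm:ISS_like}, without the jump terms. Hence $z_j\to(\xi_q^*,q,\tau)$ with $\tau\in[0,1]$.

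In the second case, $z_j=(\Theta_r^{t_j}(\Pi_r\xi_j),r,0)$, where $\xi_j=\Theta_q^{s_j}(\xi_{0,j})$ and $\xi_{0,j}\to\xi_q^*$. Uniform stability gives $\xi_j\to\xi_q^*$, and continuity of the affine map $\Pi_r$ gives $\Pi_r\xi_j\to\Pi_r\xi_q^*$. We then need $\Theta_r^{t_j}(\Pi_r\xi_j)$ to be close to the orbit of $\Pi_r\xi_q^*$, uniformly in $t_j$. By \eqref{eqn: dae_sol}, $\Theta_r^t(a)-\Theta_r^t(b)=e^{A_r^{\rm diff}t}(a-b)$. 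For $a-b\in\mathrm{span}(V_r)$, Assumption~\ref{assump: matrices} together with Lemma~\ref{lem: eig_vals} bounds this by $c|a-b|$ for all $t\ge0$. Here $a-b\in\mathrm{span}(V_r)$ holds because both points lie in $\mathcal C_r$. Consequently $\operatorname{dist}(z_j,\{\Theta_r^t(\Pi_r\xi_q^*)\}\times\{r\}\times\{0\})\to0$, so $z$ lies in the closure of that orbit.

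Combining the two cases yields $\mathcal A_q\subset\mathcal A$, and hence $\mathcal B\subset\mathcal A$.

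\emph{Main obstacle.} The delicate step is the uniform-in-time continuity of $\Theta_r^t$ on $\mathcal C_r$. It hinges on showing that $e^{A_r^{\rm diff}t}$ is bounded on $\mathrm{span}(V_r)$, i.e.\ that $\Pi_r^{\rm diff}A_r$ leaves $\mathrm{span}(V_r)$ invariant and is Hurwitz there. We would first verify this invariance from the QW form: $\Pi_r^{\rm diff}A_rV_r=V_rJ_r$. After that, the quadratic Lyapunov inequality on $\mathrm{span}(V_r)$ gives exponential decay there.

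A secondary bookkeeping point is the role of $\tau$. In the limit system $\tau$ is frozen, so the $[0,1]$ factor attaches only to the equilibria and the post-jump pieces carry $\tau=0$. We also need $\Pi_r\xi_j\in\mathcal C_r$, which is immediate from Definition~\ref{defn: CP}.
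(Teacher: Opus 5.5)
Your proposal is correct and follows essentially the same route as the paper. The inclusion $\mathcal A\subseteq\mathcal B$ comes from starting at $\xi_q^\star$, which is the paper's observation $K_q^\infty\subset K_q^j$. The reverse inclusion uses the same ingredients as the paper: at most one jump, the uniform bound on the mode-$q$ flow near $\xi_q^\star$, the Lipschitz projector $\Pi_r$, and the uniform-in-time bound on $e^{A_r^{\mathrm{diff}}t}$. The differences are presentational. The paper packages these estimates into the set inclusion $R(K_q^j)\subseteq R(K_q^\infty)+c_3(j+1)^{-1}\mathbb B$ to obtain $\mathcal A_q=\overline{R(K_q^\infty)}$, whereas you argue pointwise with approximating sequences. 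You also justify the bound on $e^{A_r^{\mathrm{diff}}t}$ directly via $\Pi_r^{\mathrm{diff}}A_rV_r=V_rJ_r$, which the paper only cites.
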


\vspace{0.1cm}\noindent 
\textbf{Proof:} Let $K^j_{q}:=  (\left(\{\xi^*_q\} + (j+1)^{-1} \mathbb{B})\cap\mathcal{C}_q\right) \times \{q\} \times [0, 1]$ and $K^\infty_{q}:= \{\xi^*_q\} \times \{q\} \times [0, 1]$. Note that $\mathcal{A}_{j,q} = \overline{R(K^j_q)}$. Since $K^\infty_{q}\subset K^j_{q}$, for all $j\geq 0$, we have $\overline{R\left (K^\infty_{q}\right )} \subseteq \mathcal{A}_{j,q}$ for all $j\in\mathbb{Z}_{\geq 0}$. Hence $\overline{R\left (K^\infty_{q}\right )} \subseteq \bigcap_{j\in\mathbb{Z}_{\geq 0}} \mathcal{A}_{j,q} = \mathcal{A}_{q} \subset \mathcal{B}$. To show the other direction, we note that since $N_0=1$, system \eqref{eqn: ideal_hybrid} can experience at most one jump. Therefore, for any $\phi \in \mathcal{S}(K^j_q)$, one of the following must hold true: 1) \text{dom}($\phi$) = $[0, \infty) \times \{0\}$; 2) \text{dom}($\phi$) = $[0,t_1] \times \{0\} \cup [t_1,\infty) \times \{1\}$, for some $t_1 \geq 0$.  Let us consider the first possibility. It is clear that 
    \begin{equation*}
        \phi(t,0) = (\Theta_q^t(\xi(0,0)), q,\tau(0,0)), \, \forall (t,0) \in \text{dom}(\phi),
    \end{equation*}
    where $\xi(0,0) \in \{\xi^*_q\} + (j+1)^{-1} \mathbb{B}$, $q\in\mathcal Q_{\mathrm{tr}}$, and $\tau(0,0) \in [0,1]$. Since $\xi^*_q$ is globally asymptotically stable for the $q^{\text{th}}$ mode of System \eqref{eqn: ideal_hybrid}, there exists a constant $c_1 >0$ such that $\Theta^t_q(\xi(0,0)) \in \left (\{\xi^*_q\} + c_1(j+1)^{-1} \mathbb{B}  \right )\cap\mathcal{C}_q$, for all $(t,0) \in \text{dom}(\phi)$. Therefore,
    \begin{align*}
        \phi(t,i)&\in ((\{\xi^*_q\} + c_1(j+1)^{-1} \mathbb{B})\cap\mathcal{C}_q) \times \{q\} \times [0, 1]
    \end{align*}
    for all $(t,i)\in \text{dom}(\phi)$.
    \par Now, we consider the second possibility. For all $(t,0) \in \text{dom}(\phi)$, we have  $\phi(t,0) = (\Theta_q^t(\xi(0,0)), q,\tau(0,0))$. Now, for $(t,1) \in \text{dom}(\phi)$ we have 
    \begin{equation*}
        \phi(t,1) = (\Theta^{t-t_1}_{q^+} \circ \Pi_{q^+} \circ \Theta_q^{t_1}(\xi(0,0)), q^+, 0). 
    \end{equation*}
    Similar to the previous possibility, there exists a constant $c_1 >0$ such that  $\Theta^t_q(\xi(0,0)) \in \left (\{\xi^*_q\} + c_1(j+1)^{-1} \mathbb{B}  \right )\cap\mathcal{C}_q$, for all $(t,0) \in \text{dom}(\phi)$. Since \textcolor{black}{$L_{q^+}$ is the linear part of $\Pi_{q^+}$, the map $\Pi_{q^+}$ is Lipschitz with constant $l_{q^+} := \|L_{q^+}\|_2$}. Therefore, the image of
$\{\xi_q^\star\}+c_1(j+1)^{-1}\mathbb B$
under $\Pi_{q^+}$ is contained in
$\{\Pi_{q^+}(\xi_q^\star)\}
+c_1l_{q^+}(j+1)^{-1}\mathbb B.
$. Thus, for all $(t,1) \in \text{dom}(\phi)$ we have
    \begin{align*}
\Theta_{q^+}^{t-t_1}
\circ\Pi_{q^+}
\circ&\Theta_q^{t_1}(\xi(0,0))
\in
\Bigl\{
\Theta_{q^+}^{t-t_1}(\xi)
:\,\notag\\
&\bigl|\xi-\Pi_{q^+}(\xi_q^\star)\bigr|
\leq c_1l_{q^+}(j+1)^{-1}
\Bigr\}.
\end{align*}
    %
    Since $\xi^*_{q^+}$ is globally exponentially stable for the $(q^+)^{\text{th}}$ mode of System \eqref{eqn: ideal_hybrid} (a direct consequence of Assumption \ref{assump: matrices}), \textcolor{black}{there exists $c_2 \geq 1$ such that $\|e^{A_{q^+}^{\mathrm{diff}} t}\| \leq c_2$ for all $t \geq 0$ and $q^+ \in \mathcal{Q}_{\text{tr}}$ (see, e.g.,~\cite[Thm.~4.4]{trenn2012switched}).} \textcolor{black}{Consequently}, there exists a constant $c_2 >0$ such that for all $t \geq 0$ and $q \in \mathcal{Q}_{\text{tr}}$,
       \begin{equation*}
\left|
\Theta_q^t(\xi)-\Theta_q^t(\xi')
\right|
\leq
c_2|\xi-\xi'|,
\qquad
\forall\,\xi,\xi'\in\mathcal C_q,
\quad t\geq0.
\end{equation*}
    As a result we get 
    \begin{align}
        \label{eqn: containment}
        \{ &\Theta_{q^+}^{t-t_1}(\xi) \, : \, \lvert \xi - \Pi_{q^+}(\xi^*_q) \vert  \leq c_1 \cdot l_{q^+} (j+1)^{-1}\} \subset \nonumber \\
        & \{\Theta_{q^+}^{t-t_1}(\Pi_{q^+} \xi^*_q)\} + c_1 \cdot l_{q^+} \cdot c_2 (j+1)^{-1} \mathbb{B}. 
    \end{align}
    It is clear now that for all $(t,1) \in \text{dom}(\phi)$,
    %
    \begin{align*}
\phi(t,1)
\in
\bigcup_{q^+\in\mathcal Q_{\mathrm{tr}}\setminus\{q\}}
\Bigl(
&
\bigl\{
\Theta_{q^+}^{t-t_1}
\bigl(\Pi_{q^+}(\xi_q^\star)\bigr)
\bigr\}
\\
&{}
+c_1c_2l(j+1)^{-1}\mathbb B
\Bigr)
\times\{q^+\}\times\{0\}.
\end{align*}
    where $l \coloneqq \max_{r \in \mathcal{Q}_{\text{tr}}} \lVert \textcolor{black}{L_r} \rVert$.
    From the above arguments:
    \begin{align*}
        R(K_q^\infty)
        ={}&
        \{\xi_q^\ast\}\times\{q\}\times[0,1]
        \nonumber\\
        &\cup
        \bigcup_{r\in\mathcal{Q}_{\text{tr}}\setminus\{q\}}
        \{\Theta_r^t(\Pi_r(\xi_q^\ast)):t\geq0\}
        \times\{r\}\times\{0\}.
    \end{align*}
    Since each subsystem has an exponentially stable equilibrium point, we also have
    \begin{align}
        &\overline{R(K_q^\infty)}
        ={}
        \{\xi_q^\ast\}\times\{q\}\times[0,1]
        \nonumber\\
        &\cup
        \bigcup_{r\in\mathcal{Q}_{\text{tr}}\setminus\{q\}}
        \left(
        \{\Theta_r^t(\Pi_r(\xi_q^\ast)):t\geq0\}
        \cup\{\xi_r^\ast\}
        \right)
        \times\{r\}\times\{0\}.
        \label{eq:closureR0Kinf}
    \end{align}
    We now show that $\mathcal{A}_q=
    \overline{R(K_q^\infty)}$. Indeed, since $K_q^\infty\subset K_q^j$ for every
$j\in\mathbb{Z}_{\geq0}$, it follows that $R(K_q^\infty)
    \subseteq
    R(K_q^j)$, and therefore $\overline{R(K_q^\infty)}
    \subseteq
    \overline{R(K_q^j)}
    =
    \mathcal{A}_{j,q}$ for every $j\in\mathbb{Z}_{\geq0}$. Thus,
\begin{equation}
    \overline{R(K_q^\infty)}
    \subseteq
    \bigcap_{j\in\mathbb{Z}_{\geq0}}\mathcal{A}_{j,q}
    =
    \mathcal{A}_q.
\label{eq:first-inc}
\end{equation}
To show the other inclusion, we define the constant $c_3 \coloneqq \max \{c_1, c_1 \cdot c_2 \cdot l\}$. Using the containment in \eqref{eqn: containment}, it is clear that  $R(K_q^j)
    \subseteq
    R(K_q^\infty)
    +c_3(j+1)^{-1}\mathbb{B}$. Taking the closure on both sides gives us,  $\mathcal{A}_{j,q}
    =
    \overline{R(K_q^j)}
    \subseteq
    \overline{
    R(K_q^\infty)
    +c_3(j+1)^{-1}\mathbb{B}}$. Since $R(K_q^\infty)
    \subseteq
    \overline{R(K_q^\infty)}$ we further obtain $\mathcal{A}_{j,q}
    \subseteq
    \overline{R(K_q^\infty)}
    +c_3(j+1)^{-1}\mathbb{B}$, where we have used the fact that
$\overline{R(K_q^\infty)}$ is closed and the ball is compact.
Therefore,
\begin{align}
    \mathcal{A}_q
    &=
    \bigcap_{j\in\mathbb{Z}_{\geq0}}\mathcal{A}_{j,q}
    \nonumber\subseteq
    \bigcap_{j\in\mathbb{Z}_{\geq0}}
    \left(
    \overline{R(K_q^\infty)}
    +c_3(j+1)^{-1}\mathbb{B}
    \right)
    \nonumber\\
    &=
    \overline{R(K_q^\infty)}.
\label{eq:second-inc}
\end{align}
The inclusions in \eqref{eq:first-inc} and \eqref{eq:second-inc} give us
$\mathcal{A}_q=\overline{R(K_q^\infty)}$. 

Finally, taking the union over $q\in\mathcal{Q}_{\text{tr}}$ and using
\eqref{eq:closureR0Kinf}, we obtain
\begin{align*}
\mathcal{B}
&=
\bigcup_{q\in\mathcal{Q}_{\text{tr}}}
\overline{R(K_q^\infty)}=\bigcup_{q\in\mathcal{Q}_{\text{tr}}}
\Bigg[
\{\xi_q^\ast\}\times\{q\}\times[0,1]
\\[-1mm]
&\left.
{}\cup
\bigcup_{r\in\mathcal{Q}_{\text{tr}}\setminus\{q\}}
\left(
\{\Theta_r^t(\Pi_r(\xi_q^\ast)):t\geq0\}
\cup\{\xi_r^\ast\}
\right)
\times\{r\}\times\{0\}
\right] \\
&= \mathcal{A}.
\end{align*}
This completes the proof.  \hfill $\blacksquare$

\vspace{0.1cm}
\textbf{Proof of Theorem 2:} The proof follows similar steps as in \cite[Theorem 1]{baradaran2020omega} but uses the sharper characterization of the set $\mathcal{A}$ in Proposition \ref{prop: ols_dae}. It is clear that the set $C$ and $D$ are closed since $\mathcal{Q}_{\text{tr}}$ is finite, the interval $[0,1]$ is closed, and any finite-dimensional affine subspace is also closed. Moreover, the maps $F$ and $G$ are OSC, LB and the map $F$ is convex-valued. Therefore, system \eqref{eqn: eqiv_HI} satisfies the hybrid basic conditions. Let $K$ be chosen sufficiently large such that it contains in its interior the set $\mathcal{A}$. Following the same arguments as in \cite{baradaran2020omega}, we have that $\Omega(K)= \mathcal{A}$. From \cite[Corollary 7.7]{goebel2012hybrid}, we obtain that the set $\mathcal{A}$ is asymptotically stable for system \eqref{eqn: ideal_hybrid}. Since the preceding argument applies to every sufficiently large
compact set $K$, and such compact sets can be chosen to contain any
prescribed compact subset of the admissible state space, the basin
of attraction of $\mathcal A$ is the entire admissible state space.
Therefore, $\mathcal A$ is globally asymptotically stable for
system~\eqref{eqn: ideal_hybrid}. Now consider the inflated hybrid system $\mathcal{H}_{\epsilon}$, given by
\begin{align}\label{eqn:inflation_1}
    x \in C_\epsilon,~~\Dot{x}\in F_\epsilon(x),~~~~~~
    x \in D_\epsilon,~~x^+\in G_\epsilon(x),  
\end{align}
where $\epsilon=\frac{1}{\tau_d}$, and where the data of \eqref{eqn:inflation_1} is constructed from the data of $\mathcal{H}_0$ as in \cite[Sec. 6]{teel2013lyapunov}. In particular, for $\epsilon=1/\tau_d$, the data of the system satisfy $C\subseteq C_\epsilon$, $D\subseteq D_\epsilon$, and, for every admissible state, we have $F_{\tau_d}(x)\subseteq F_\epsilon(x)$, and $G_{\tau_d}(x)\subseteq G_\epsilon(x)$. Consequently, every solution of the original system is also a solution of the inflated system
$\mathcal H_\epsilon$, and, by \cite[Thm. 7.21]{goebel2012hybrid}, the inflated hybrid system $\mathcal{H}_{\epsilon}$ renders the set $\mathcal{A}$ semi-globally practically asymptotically stable as $\epsilon\to0^+$. \hfill $\blacksquare$
\section{Proofs for the Inference-Phase Switching Case}
\textbf{Proof of Theorem \ref{thm: shds_recurrence}:} Consider the function $V(x) \;=\; e^{-\mu\tau/N_0}\,\tilde\xi^\top P_q \tilde\xi$, where $\mu > 0$ is a free parameter and for each $q \in \mathcal{Q}_{\text{in}}$, the matrices $P_q$ are symmetric and positive definite on
$\operatorname{span}(V_q)$.  For each $q' \in \mathcal{Q}_{\text{in}}$,
define the integral
\begin{align}
\label{eq: I_def}
    \tilde{I}_{q'}(\mu):=
    \mathbb{E}_{v_2\mid v_1=q'}\!\left[e^{-\mu v_2/N_0}\right]= \int_0^{N_0} f_{q'}(v_2)e^{-\mu v_2/N_0}dv_2,
\end{align}
where $f_{q'}$ is the probability density for $v_2$.
It can be seen that the integral $\tilde{I}_{q'}(\mu)$ is strictly positive and
monotonically decreasing in $\mu$, with $\tilde{I}_{q'}(0)=1$ and
$\tilde{I}_{q'}(\mu)\to 0$ as $\mu\to\infty$ for all three modes. For the convenience of the reader, we provide explicit expressions for $\tilde{I}_{q'}(\mu)$ in Appendix D.

Next, we show that $V$ is a
Lagrange--Foster function for the SHDS~\eqref{eq: shds} in the sense
of~\cite[Sec.~4.4.1]{teel2013lyapunov}. We begin by showing the class $\mathcal{G}_\infty$ bounds in \cite[(16a), (16b)]{teel2013lyapunov}. Indeed, note that since $\mathcal{A}_{\text{in}} \subset \mathcal{O}_\varepsilon \subset \Bar{\mathcal{O}_\varepsilon}$, we have
\begin{equation}
\label{eqn: distance}
\lvert x\rvert_{\Bar{\mathcal{O}_\varepsilon}} \leq \lvert x \rvert_{\mathcal{A}_{\text{in}}}, \, \forall x \in C \cup D.    
\end{equation}
\par \noindent
Since $\tau \in [0,N_0]$, we have $e^{-\mu\tau/N_0} \in [e^{-\mu}, 1]$,
giving
\begin{equation}\label{eq: shds_sandwich}
    e^{-\mu}\,\underline{p}\,|x|^2_{\mathcal{A}_{\text{in}}}
    \;\leq\; V(x) \;\leq\;
    \overline{p}\,|x|^2_{\mathcal{A}_{\text{in}}},
    \quad \forall x \in C\cup D,
\end{equation}
where $\underline{p}$ and $\overline{p}$ are defined as in
Section~\ref{sec:formulation}, but relative to $\mathcal{Q}_{\text{in}}$. Both bounding functions are class
$\mathcal{G}_\infty$. Using $\eqref{eqn: distance}$ and the lower bound in \eqref{eq: shds_sandwich}, we satisfy \cite[(16a)]{teel2013lyapunov}. The upper bound of \eqref{eq: shds_sandwich} satisfies \cite[(16b)]{teel2013lyapunov} with $\varpi(x) = \lvert x\rvert_{\mathcal{A}_{\text{in}}}$.

\smallskip \noindent
To show decrease during flows, let $x \in C$ and consider an arbitrary direction of flow $f \in F(x)$, with
worst-case $\dot\tau = -\frac{1}{\tau_d}$. We have,  $\langle \nabla V(x), f\rangle
    = e^{-\frac{\mu\tau}{N_0}}\bigl(2\tilde\xi^\top P_q\Pi_q^{\mathrm{diff}}A_q\tilde\xi\bigr)+ \frac{\mu e^{-\frac{\mu\tau}{N_0}}}{\tau_d \cdot  N_0}\tilde\xi^\top P_q\tilde\xi$. Using Assumption~\ref{assump: flow_LMI}, we get 
\begin{align}
    \label{eq: flow_final}
    \langle \nabla V(x), f\rangle \; &\leq \;
    e^{-\mu\tau/N_0}\left(-\tilde\xi^\top \tilde{N}_q\tilde\xi \right)\leq-\lambda V(x),
\end{align}
where $\lambda:=\; \frac{\underline{m}}{\overline{p}} > 0$ and $\underline{m} := \min_{q\in\mathcal{Q}_{\text{in}}}\lambda_{\min}(O_q^\top \tilde{N}_q O_q)$. Now, let $x \in D$ and $g \in G(x, v_1, v_2)$. At a jump, we have 
$\tilde\xi^+ = L_{v_1}\tilde\xi + \Delta_{q,v_1}$ and $\tau^+ = v_2$, and $V$ satisfies  
\begin{equation*}
    V(g) = e^{-\mu v_2/N_0}\,
    (L_{v_1}\tilde\xi + \Delta_{q,v_1})^\top P_{v_1}
    (L_{v_1}\tilde\xi + \Delta_{q,v_1}).
\end{equation*}
Using the tower property, we get
\begin{align*}
\mathbb{E}[V(g)\mid x]&=
\mathbb{E}_{v_1}\!\bigg[
\mathbb{E}_{v_2\mid v_1}\!\big[
e^{-\mu v_2/N_0}
\left(L_{v_1}\tilde{\xi}+\Delta_{q,v_1}\right)^\top
P_{v_1}\\
&\cdot
\left(L_{v_1}\tilde{\xi}+\Delta_{q,v_1}\right)
\big]
\bigg]
\\
&=\mathbb{E}_{v_1} \Big [ \mathbb{E}_{v_2 \lvert v_1} [e^{-\mu v_2/N_0}] \cdot  (L_{v_1}\tilde\xi + \Delta_{q,v_1})^\top P_{v_1} \\
    & \cdot (L_{v_1}\tilde\xi + \Delta_{q,v_1})    \Big] \\[2mm]
    &= \sum_{q'\in\mathcal{Q}_{\text{in}}} p_{q'} \tilde{I}_{q'}(\mu) \cdot
       (L_{q'}\tilde\xi + \Delta_{q,q'})^\top P_{q'} \\
      & \cdot(L_{q'}\tilde\xi + \Delta_{q,q'}).
\end{align*}
Expanding and applying Young's inequality with the weight $\theta > 0$, we get $(L_{q'}\tilde\xi + \Delta_{q,q'})^\top P_{q'}
    (\textcolor{black}{L_{q'}}\tilde\xi + \Delta_{q,q'})
\leq (1+\theta)\,\tilde\xi^\top L_{q'}^\top P_{q'} L_{q'}\tilde\xi
    + \Bigl(1+\tfrac{1}{\theta}\Bigr)\Delta_{q,q'}^\top P_{q'}\Delta_{q,q'}$. Finally, we multiply the above inequality with $p_{q'} \tilde{I}_{q'}(\mu)$, sum over the modes $q' \in \mathcal{Q}_{\text{in}}$, and invoke the second inequality in 
Assumption~\ref{assump: flow_LMI} on the first term to get
\begin{equation}\label{eq: jump_ineq_shds}
    \mathbb{E}[V(g)\mid x] \;\leq\;
    \kappa^*\tilde\xi^\top P_q\tilde\xi +
    \bigl(1+\tfrac{1}{\theta}\bigr)\alpha_\Gamma(\mu).
\end{equation}
Since $\tau = 0$ at a jump, we have $V(x)= e^0\,\tilde\xi^\top P_q\tilde\xi= \tilde\xi^\top P_q\tilde\xi$. So, $\kappa^*\tilde\xi^\top P_q\tilde\xi = \kappa^* V(x)$, giving us the final bound
\begin{equation}\label{eq: jump_final}
    \mathbb{E}[V(g)|x]\leq \kappa^* V(x) + c, \forall x \in D,
\end{equation}
where $c = \bigl(1+\tfrac{1}{\theta}\bigr)\alpha_\Gamma(\mu)$.  Next, define the truncated flow and jump sets as $C^\circ := C \setminus \mathcal{O}_\varepsilon$, $D^\circ := D \setminus \mathcal{O}_\varepsilon$. For $x \in C^\circ \cup D^\circ$, we have $|x|_{\mathcal{A}_{\text{in}}}
\ge r(\alpha_\Gamma(\mu))+\varepsilon
> r(\alpha_\Gamma(\mu)),
$
and therefore $(1-\kappa^\star)V(x)-c>0$. Hence, on the truncated system, we may take $\rho(x)=
\min\left\{
\lambda V(x),
\left[(1-\kappa^\star)V(x)-c\right]_+
\right\}
>0$, where $[s]_+=\max\{0,s\}$,
to obtain the following inequalities via
$\eqref{eq: flow_final}$ and $\eqref{eq: jump_final}$: $\langle \nabla V(x), f\rangle \leq - \rho(x)$ for all $x \in C^\circ, f \in F(x)$; and $\mathbb{E}[V(g)|x] \leq V(x) - \rho(x)$ for all $x \in D^\circ.$ To show that $\rho \in \mathcal{PD}(\mathcal{O}_\varepsilon)$, define the compact set $R \coloneqq \{x \in C\cup D :
    |x|_{\mathcal{A}_{\text{in}}} \leq r\!\left(\alpha_\Gamma(\mu)\right)\} $ and note that $(R + \frac{\varepsilon}{2} \mathbb{B})\cap (C\cup D) \subset \mathcal{O}_\varepsilon$. Now, we only need to show that
$\rho(x)>0$ for all $x\in(C\cup D)\setminus R$.
Hence,
$\lvert x\rvert_{\mathcal{A}_{\mathrm{in}}}
>r(\alpha_\Gamma(\mu))$.
To that end, choose any
$x\in(C\cup D)\setminus R$ and consider the following bounds: $V(x)\ge e^{-\mu}\underline{p}|x|_{\mathcal{A}_{\text{in}}}^2 >e^{-\mu}\underline{p} r(\alpha_\Gamma(\mu))^2=\frac{c}{1-\kappa^{\star}} > 0$, using $r(\alpha_\Gamma(\mu))^2 = c/(e^{-\mu}\underline{p}(1-\kappa^{\star}))$. Hence, $\rho(x)>0$ for all
$x\in(C\cup D)\setminus R$, implying that
$\rho\in\mathcal{PD}(\mathcal{O}_\varepsilon)$.
\vspace{0.1cm}
\par \noindent
Therefore, $V$ is a Lagrange-Foster function and by Theorem \cite[4.4]{teel2013lyapunov} the set $\mathcal{O}_\varepsilon$ is uniformly globally recurrent for the SHDS \eqref{eq: shds}. \hfill $\blacksquare$
\section{Explicit Computation of $\tilde{I}_{q'}(\mu)$}
\label{appen:integrals}
In this appendix, we provide explicit expressions for the
mode-dependent normalized timer-reset integral
\(\tilde{I}_{q'}(\mu)\) for the Idle and Prefill modes, together
with a general expression for the Decode mode.

\subsection*{Idle: Truncated Exponential}

The normalized Idle timer-reset variable follows an
\(\mathrm{Exp}(\bar{\lambda}_r)\) distribution truncated to
\([0,N_0]\), with density
\[
f_{\mathrm{id}}(v)
=
\frac{\bar{\lambda}_r e^{-\bar{\lambda}_r v}}
{1-e^{-\bar{\lambda}_r N_0}},
\qquad
v\in[0,N_0].
\]
Using the definition of the timer-reset integral and combining the
exponentials gives
\begin{equation}
\label{eqn:idle_dwell}
\tilde{I}_{\mathrm{id}}(\mu)
=
\frac{\bar{\lambda}_r N_0}
{\mu+\bar{\lambda}_r N_0}
\frac{
1-e^{-(\mu/N_0+\bar{\lambda}_r)N_0}
}{
1-e^{-\bar{\lambda}_r N_0}
}.
\end{equation}
The expression is finite for every \(\mu>0\) and satisfies
\(\tilde{I}_{\mathrm{id}}(0)=1\). Moreover,
\[
\tilde{I}_{\mathrm{id}}(\mu)
\sim
\frac{\bar{\lambda}_r N_0}
{1-e^{-\bar{\lambda}_r N_0}}
\frac{1}{\mu}
=
O(\mu^{-1}),
\qquad
\mu\to\infty.
\]
\subsection*{Prefill: Truncated Gamma}
The normalized Prefill timer-reset variable follows
$\mathrm{Gamma}(\alpha_{\mathrm{pre}},
\bar{\beta}_{\mathrm{pre}})$, truncated to $[0,N_0]$,
with density
\[
f_{\mathrm{pre}}(v)
=
\frac{
\bar{\beta}_{\mathrm{pre}}^{\alpha_{\mathrm{pre}}}
v^{\alpha_{\mathrm{pre}}-1}
e^{-\bar{\beta}_{\mathrm{pre}}v}
}{
\gamma\!\left(
\alpha_{\mathrm{pre}},
\bar{\beta}_{\mathrm{pre}}N_0
\right)
},
\qquad v\in[0,N_0].
\]
where $\gamma(\alpha,x)$ denotes the  lower incomplete Gamma
function \cite[Eq.~6.5.2]{abramowitz1964}. Substituting into \eqref{eq: I_def} and combining the exponentials
$e^{-\bar{\beta}_{\mathrm{pre}} v}$ and $e^{-\mu v/N_0}$ into a single term
$e^{-sv}$ with $s := \bar{\beta}_{\mathrm{pre}} + \mu/N_0$,
\begin{equation}
\tilde{I}_{\mathrm{pre}}(\mu) = \frac{\bar{\beta}_{\mathrm{pre}}^{\alpha_{\mathrm{pre}}}}
{\gamma(\alpha_{\mathrm{pre}}, \bar{\beta}_{\mathrm{pre}} N_0)}
\int_0^{N_0} v^{\alpha_{\mathrm{pre}}-1} e^{-sv}\, dv.
\end{equation}
Substituting $u = sv$ in the remaining integral gives the exact identity
\[
\int_0^{N_0} v^{\alpha_{\mathrm{pre}}-1} e^{-sv}\, dv
= s^{-\alpha_{\mathrm{pre}}} \gamma(\alpha_{\mathrm{pre}}, sN_0),
\]
so that
\begin{align*}
\tilde{I}_{\mathrm{pre}}(\mu) &= \frac{\bar{\beta}_{\mathrm{pre}}^{\alpha_{\mathrm{pre}}}}
{\gamma(\alpha_{\mathrm{pre}}, \bar{\beta}_{\mathrm{pre}} N_0)}\,
s^{-\alpha_{\mathrm{pre}}}\, \gamma(\alpha_{\mathrm{pre}}, sN_0).
\end{align*}
As $\mu \to \infty$, $s \to \infty$ and hence $sN_0 \to \infty$. Since
$\gamma(\alpha,x) \to \Gamma(\alpha)$ as $x \to \infty$, the incomplete Gamma
factor $\gamma(\alpha_{\mathrm{pre}}, sN_0) \to \Gamma(\alpha_{\mathrm{pre}})$,
a fixed constant. The remaining $\mu$-dependence is entirely in $s^{-\alpha_{\mathrm{pre}}}$,
and $s \sim \mu/N_0$ for large $\mu$, giving
\begin{align*}
\tilde{I}_{\mathrm{pre}}(\mu) &\sim
\frac{\bar{\beta}_{\mathrm{pre}}^{\alpha_{\mathrm{pre}}}\,\Gamma(\alpha_{\mathrm{pre}})}
{\gamma(\alpha_{\mathrm{pre}}, \bar{\beta}_{\mathrm{pre}} N_0)}\,
\left(\frac{N_0}{\mu}\right)^{\alpha_{\mathrm{pre}}}= O\!\left(\mu^{-\alpha_{\mathrm{pre}}}\right)
\end{align*}
as $\mu \to \infty$. For $\alpha_{\mathrm{pre}}>1$, the Prefill timer-reset integral
decays faster with respect to $\mu$ than the corresponding
Idle integral, provided the latter has a decay rate of
$\Theta(\mu^{-1})$

\subsection*{Decode: General Normalized Distribution on $[0,N_0]$}

The normalized Decode timer-reset variable has a general density
$f_{\mathrm{dec}}$ on $[0,N_0]$, obtained by scaling the corresponding
physical-time Decode distribution by $1/\tau_d$. Possible models
include a truncated Pareto or log-$t$ distribution. We assume that
there exist constants $C,\delta>0$ and $\epsilon\geq0$ such that
\[
    f_{\mathrm{dec}}(v)\leq Cv^\epsilon,
    \qquad v\in[0,\delta],
\]
and that $f_{\mathrm{dec}}$ is bounded by some $M<\infty$ on
$[\delta,N_0]$. No specific parametric family is otherwise assumed.
The corresponding normalized timer-reset integral is
\begin{equation}
\label{eqn:decode_dwell}
\tilde{I}_{\mathrm{dec}}(\mu) = \int_0^{N_0} f_{\mathrm{dec}}(v)\,
e^{-\mu v/N_0}\, dv.
\end{equation}
Splitting the integral at $\delta$ we obtain:
\[
\tilde{I}_{\mathrm{dec}}(\mu) = \underbrace{\int_0^{\delta} f_{\mathrm{dec}}(v)
e^{-\mu v/N_0}\, dv}_{=: J_1(\mu)}
+ \underbrace{\int_{\delta}^{N_0} f_{\mathrm{dec}}(v) e^{-\mu v/N_0}\, dv}_{=: J_2(\mu)}.
\]
For the second term, since $v\geq\delta$ on $[\delta,N_0]$, we have $e^{-\mu v/N_0}\leq e^{-\mu\delta/N_0}$.Consequently, $J_2(\mu)
\leq M(N_0-\delta)e^{-\mu\delta/N_0}$. For the first term, using $f_{\mathrm{dec}}(v) \le Cv^{\epsilon}$ on
$[0,\delta]$ and extending the domain of integration to $[0,\infty)$ (which
only increases the bound, since the integrand is nonnegative),
\[
J_1(\mu) \le C\int_0^{\delta} v^{\epsilon} e^{-\mu v/N_0}\, dv
\le C\int_0^{\infty} v^{\epsilon} e^{-\mu v/N_0}\, dv.
\]
Substituting $u = \mu v/N_0$ (so $v = N_0 u/\mu$, $dv = N_0\,du/\mu$) turns
the right-hand integral into a standard Gamma-function integral:
\begin{align*}
C\int_0^{\infty} \left(\frac{N_0 u}{\mu}\right)^{\epsilon} e^{-u}\,
\frac{N_0}{\mu}\, du
&= C\left(\frac{N_0}{\mu}\right)^{\epsilon+1}\int_0^{\infty} u^{\epsilon} e^{-u}\, du \\
&= C\,\Gamma(\epsilon+1)\left(\frac{N_0}{\mu}\right)^{\epsilon+1}.
\end{align*}
Combining the two pieces,
\[
\tilde{I}_{\mathrm{dec}}(\mu) \le C\,\Gamma(\epsilon+1)\,N_0^{\epsilon+1}\,
\mu^{-(\epsilon+1)} + M(N_0-\delta)\,e^{-\mu\delta/N_0},
\]
and since the second term is exponentially smaller than the first as
$\mu \to \infty$, we obtain:
\[
\tilde{I}_{\mathrm{dec}}(\mu) = O\!\left(\mu^{-(\epsilon+1)}\right), \qquad \mu \to \infty.
\]
As with idle and prefill, the decay is polynomial.

\section*{Acknowledgments}

This work was authored by NLR, operated by Alliance for Sustainable Energy, LLC, for the U.S. Department of Energy (DOE) under Contract No. DE-AC36-08GO28308. Funding provided by DOE Office of Electricity, Advanced Grid Modeling Program. The views expressed in the article do not necessarily represent the views of the DOE or the U.S. Government. The U.S. Government retains and the publisher, by accepting the article for publication, acknowledges that the U.S. Government retains a nonexclusive, paid-up, irrevocable, worldwide license to publish or reproduce the published form of this work, or allow others to do so, for U.S. Government purposes

%
\bibliographystyle{ieeetr}
\bibliography{main_refs_v3}
\end{document}